\def\today{\ifcase\month\or
  January\or February\or March\or April\or May\or June\or
  July\or August\or September\or October\or November\or December\fi
  \space\number\day, \number\year}
\DeclareMathOperator{\supp}{\mathrm{supp}}
 \newtheorem{theorem}{Theorem}
 \newtheorem{lemma}[theorem]{Lemma}
 \newtheorem{proposition}[theorem]{Proposition}
 \newtheorem{corollary}[theorem]{Corollary}
 \theoremstyle{definition}
 \theoremstyle{remark}
 \newcommand{\mc}{\mathcal}
 \newcommand{\C}{\mathbb{C}}
 \newcommand{\R}{\mathbb{R}}
 \newcommand{\N}{\mathbb{N}}
 \newcommand{\Z}{\mathbb{Z}}
 \newcommand{\csch}{\mathrm{csch}}
 \newcommand{\dt}{\text{\rm d}t}
  \renewcommand{\d}{\text{\rm d}}
 \newcommand{\dx}{\text{\rm d}x}
 \newcommand{\dmu}{\text{\rm d}\mu}
\begin{document}
\title[A tale of three integrals]{On Montgomery's pair correlation conjecture:\\ a tale of three integrals}
\author[Carneiro, Chandee, Chirre, and Milinovich]{Emanuel Carneiro, Vorrapan Chandee, Andr\'{e}s Chirre and Micah B. Milinovich}
\subjclass[2010]{11M06, 11M26, 41A30}
\keywords{Primes in short intervals, Riemann zeta-function, pair correlation conjecture, Riemann hypothesis, Fourier optimization} %, bandlimited approximations}

\address{ ICTP - The Abdus Salam International Centre for Theoretical Physics, Strada Costiera, 11, I - 34151, Trieste, Italy.}

\email{carneiro@ictp.it}

\address{Mathematics Department, Kansas State University, 125 Cardwell Hall, Manhattan, KS, 66503 USA.}

\email{chandee@ksu.edu}

\address{Department of Mathematical Sciences, Norwegian University of Science and Technology, NO-7491 Trondheim, Norway.}

\email{carlos.a.c.chavez@ntnu.no }

\address{Department of Mathematics, University of Mississippi, University, MS 38677 USA.}

\email{mbmilino@olemiss.edu}

\allowdisplaybreaks
\numberwithin{equation}{section}

\maketitle  

\begin{abstract}
We study three integrals related to the celebrated pair correlation conjecture of H.~L.~ Montgomery. The first is the integral of Montgomery's function $F(\alpha, T)$ in bounded intervals, the second is an integral introduced by Selberg related to estimating the variance of primes in short intervals, and the last is the second moment of the logarithmic derivative of the Riemann zeta-function near the critical line. The conjectured asymptotic for any of these three integrals is equivalent to Montgomery's pair correlation conjecture. Assuming the Riemann hypothesis, we substantially improve the known upper and lower bounds for these integrals by introducing new connections to certain extremal problems in Fourier analysis. In an appendix, we study the intriguing problem of establishing the sharp form of an embedding between two Hilbert spaces of entire functions naturally connected to Montgomery's pair correlation conjecture.
\end{abstract}

%\tableofcontents

\section{Introduction}
\subsection{Background} Let $\zeta(s)$ denote the Riemann zeta-function and let 
\[
\displaystyle \psi(x) = \sum_{n\le x} \Lambda(n),
\]
where $\Lambda(n) = \log p$ if $n=p^k$ for a prime $p$ and $k\in\mathbb{N}$, and $\Lambda(n)=0$ otherwise. In order to study the distribution of primes in short intervals, Selberg \cite{S} introduced the integrals
\[
I(a,T) := \int_1^T \left| \frac{\zeta'}{\zeta}\left( \frac{1}{2}+\frac{a}{\log T}+it\right) \right|^2  \mathrm{d}t
\]
for $a>0$ and
\begin{equation}\label{20200918_12:11}
J(\beta,T) := \int_1^{T^\beta} \left(\psi\!\left(x+\frac{x}{T}\right)-\psi(x)-\frac{x}{T} \right)^{\!2} \frac{\mathrm{d}x}{x^2}
\end{equation}
for $\beta\ge0$. For $0\le \beta \le 1$, Gallagher and Mueller \cite{GaMu} proved that
\begin{equation} \label{asymp}
J(\beta,T) \sim \frac{\beta^2}{2} \frac{\log^2T}{T}, \quad \text{as } T\to \infty.
\end{equation}
Assuming the Riemann hypothesis (RH), Selberg  \cite{S} proved an upper bound for $I(a,T)$ when $a \ge 10$ and used this to show that
\begin{equation}\label{SelbergBound}
J(\beta,T) = O_\beta\!\left( \frac{\log^2 T}{T} \right), \quad \text{as } T\to \infty,
\end{equation}
for $1 < \beta \le 4$. Selberg's proof can be modified to show that the estimate in \eqref{SelbergBound} holds for each fixed $\beta > 1$. Assuming RH, for each $\beta > 1$, it is now known that there are constants $D^{\pm}$ such that
\begin{equation}\label{MontgomeryBound}
\big(D^-\beta + o(1)\big)\cdot \frac{\log^2 T}{T} \le J(\beta,T) \le \big(D^+ \beta+ o(1)\big) \cdot \frac{\log^2 T}{T},
\end{equation}
as $T \to \infty$. In particular, we see that the dependence on the parameter $\beta$ is linear. The proof of the upper bound in this form was first given by Montgomery (unpublished) while alternate proofs have been given in \cite{GaMu,GG,GGM,GM}. The proof of the lower bound is due to Goldston and Gonek \cite{GG}. 

\subsection{Equivalences to Montgomery's pair correlation conjecture} In order to study the pair correlation of the zeros of $\zeta(s)$, for $\alpha \in \mathbb{R}$ and $T\ge 2$, Montgomery \cite{M} introduced the form factor
\[
F(\alpha):=F(\alpha,T) = \frac{2\pi}{T\log T} \sum_{0<\gamma,\gamma'\le T} T^{i \alpha (\gamma-\gamma')} w(\gamma-\gamma'),
\] 
where $w(u)=4/(4+u^2)$. Here the double sum runs over the ordinates $\gamma,\gamma'$ of two sets of non-trivial zeros of $\zeta(s)$, counted with multiplicity. We use the shorthand notation $F(\alpha)$ for simplicity, but the reader should always keep in mind that this is also a function of the parameter $T$. 
It follows from the definition that  $F(\alpha)$ is even and real-valued. Moreover, since
\[
\sum_{0<\gamma,\gamma'\le T} T^{i \alpha (\gamma-\gamma')} w(\gamma-\gamma') = 2 \pi \int_{-\infty}^\infty e^{-4 \pi |u|} \bigg| \sum_{0<\gamma\le T} T^{i \alpha \gamma} e^{2\pi i \gamma u} \bigg|^2 \mathrm{d}u, 
\]
it follows that $F(\alpha) \ge 0$ for all $\alpha \in \mathbb{R}$. Montgomery was interested in the asymptotic behavior of the function $F(\alpha)$ since, by Fourier inversion, we have
\begin{equation}\label{inversion}
 \sum_{0<\gamma,\gamma'\le T} R\!\left((\gamma-\gamma') \frac{\log T}{2\pi} \right) w(\gamma-\gamma') = \frac{T\log T}{2\pi} \int_{-\infty}^\infty\widehat{R}(\alpha) \, F(\alpha) \, \mathrm{d}\alpha
\end{equation}
for any function $R \in L^1(\mathbb{R})$ such that $\widehat{R} \in L^1(\mathbb{R})$, where
\[
\widehat{R}(\alpha) = \int_{-\infty}^\infty  e^{-2\pi i \alpha x} \, R(x)\,\mathrm{d}x
\]
denotes the usual Fourier transform of $R$. Assuming RH, it is known that
\begin{equation}\label{F formula}
F(\alpha,T) = \Big(T^{-2|\alpha|}\log T + |\alpha| \Big) \left( 1 + O\!\left( \sqrt{\frac{\log\log T}{\log T}} \right)\right), \quad \text{as } T\to \infty, 
\end{equation}
uniformly for $0\le |\alpha| \le 1$. This was proved by Goldston and Montgomery \cite[Lemma 8]{GM}, refining the original work of Montgomery \cite{M}. This asymptotic formula allows one to estimate the sum on the left-hand side of $\eqref{inversion}$ for $R \in L^1(\mathbb{R})$ with $\mathrm{supp}(\widehat{R}) \subset [-1,1]$. Montgomery conjectured that $F(\alpha)\sim 1$ for $|\alpha|>1$, uniformly for $\alpha$ in bounded intervals. This is sometimes called Montgomery's strong pair correlation conjecture. This assumption, via approximating the characteristic function of an interval by bandlimited functions, led Montgomery to further conjecture that, for any fixed $\beta>0$,
\smallskip
\begin{enumerate}
\item[\textup{(I)}] \ $\displaystyle N(\beta,T):=\!\!\!\sum_{\substack{ 0<\gamma,\gamma'\le T \\ 0<\gamma-\gamma' \le \frac{2\pi \beta}{\log T} }} 1 \ \sim \  \frac{T \log T}{2\pi}  \int_0^\beta \left\{ 1 - \Big( \frac{\sin \pi u}{\pi u}\Big)^2 \right\} \, \mathrm{d}u, \quad \text{as } T\to \infty. $
\end{enumerate}
\smallskip
This is known as Montgomery’s pair correlation conjecture. Since there are $\displaystyle \sim T\log T/(2\pi)$ non-trivial zeros of $\zeta(s)$ with ordinates in the interval $(0,T]$ as $T\to \infty$, the function $N(\beta,T)$ counts the number of pairs of zeros within $\beta$ times the average spacing between zeros. 

\smallskip

Assuming RH, from the works of Gallagher and Mueller \cite{GaMu}, Goldston \cite{G}, and Goldston, Gonek and Montgomery \cite{GGM}, it is known that the following asymptotic formulae are equivalent to the validity of Montgomery's pair correlation conjecture in (I) for each fixed $\beta>0$:

\begin{enumerate}
	
\item[\textup{(II)}] $\displaystyle \int_{b}^{b+\ell}F(\alpha,T) \,\d\alpha  \sim  \ell, \quad \text{as } T\to \infty \text{ for any fixed} \  b\ge 1 \ {\rm and} \ \ell > 0$;

\medskip

\item[\textup{(III)}] \ $\displaystyle J(\beta,T) \sim  \left(\beta-\frac{1}{2}\right)  \frac{\log^2T}{T}, \quad \text{as } T\to \infty \text{ for any fixed } \beta > 1$;

\medskip

\item[\textup{(IV)}] \ $\displaystyle I(a,T) \sim  \left( \frac{1-e^{-2a}}{4 a^2} \right) T \log^2 T, \!\quad \text{as } T\to \infty \text{ for any fixed } a>0$.

\smallskip
\end{enumerate}

Since Montgomery's pair correlation conjecture remains a difficult open problem, it is natural to instead ask for upper and lower bounds for the functions $N(\beta,T)$, $\int_{b}^{b+\ell}F(\alpha,T)\,\d\alpha$, $J(\beta,T)$, and $I(a,T)$ in place of asymptotic formulae. Assuming RH, extending previous work of Gallagher \cite{Ga}, it was shown in \cite{CCLM} that
\[
N(T) \left( \beta - \frac{7}{6} + \frac{1}{2\pi^2\beta} + O\left(\frac{1}{\beta^2}\right) + o(1) \right) \, \le \, N(\beta,T) \, \le \, N(T) \left( \beta+ \frac{1}{2\pi^2\beta} + O\left(\frac{1}{\beta^2}\right) + o(1) \right),
\]
as $T \to \infty$, for all $\beta>0$, by using \eqref{inversion}, \eqref{F formula}, and certain extremal functions of exponential type. Here $N(T)$ denotes the number of non-trivial zeros of $\zeta(s)$ with ordinates in the interval $(0,T]$, and the term $7/6$ in the lower bound can be replaced by 1 if we further assume that almost all zeros of $\zeta(s)$ are simple. 

\smallskip

The purpose of this paper is to continue this direction of investigation and, using tools from Fourier analysis, substantially improve the current upper and lower bounds for the integrals in (II), (III), and (IV) assuming RH. As we shall see, novel insights and certain Fourier optimization problems emerge when we treat each of these integrals.

\subsection{Summary of results}\label{Sec_summary} We now present an overview of some of our main results. Theorems \ref{Thm1_20201215*} and \ref{Thm_20201215_01:27*} below (and their corollaries) are representatives of a much more detailed discussion that follows in Sections \ref{Sec2_integral_F} and \ref{Sec_PSI_new}, respectively. These sample results already give a clear perspective of the magnitude of the improvements in this paper over previous results.

\subsubsection{The integral of $F(\alpha)$ in bounded intervals} An important feature of this paper is the development of a general theoretical framework relating the objects we want to bound in analytic number theory to certain extremal problems in Fourier analysis. For some of these extremal problems, achieving the exact answer is a hard task, and we must rely on certain test configurations to provide reasonable approximations. For instance, we define universal constants ${\bf C}^+$ and ${\bf C}^-$ in \textsection \ref{PfThm1_UB} and  \textsection \ref{PfThm1_LB} %\eqref{20210112_11:26} and \eqref{20210204_11:03} below, 
as solutions of two such extremal problems, and use them to prove the following theorem. 
\begin{theorem}\label{Thm1_20201215*}
Assume RH, let $b \geq 1$, and let $\varepsilon >0$ be an arbitrary number. For large $\ell$, as $T \to \infty$, we have
\begin{equation*}%\label{20201215_00:21*}
 ({\bf C}^- - \varepsilon)\,\ell +o(1)
\le  \int_b^{b+\ell}  F(\alpha,T) \, \d\alpha \le  ({\bf C}^+ + \varepsilon)\,\ell +o(1),
\end{equation*}
where the constants ${\bf C}^+$ and ${\bf C}^-$ are defined in \eqref{20210112_11:26} and \eqref{20210204_11:03}, respectively. 
\end{theorem}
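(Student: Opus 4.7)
The plan is to reduce both bounds in Theorem \ref{Thm1_20201215*} to one-dimensional extremal Fourier problems, combining three fundamental inputs: positivity of $F$, the explicit asymptotic \eqref{F formula} in the range $|\alpha|\le 1$, and the Fourier inversion identity \eqref{inversion}. The asymptotic \eqref{F formula} supplies pointwise information on $F$ in the ``inner'' range, while \eqref{inversion} converts integrals of the form $\int \widehat{R}(\alpha) F(\alpha, T)\, d\alpha$ into weighted pair-correlation sums over the zeros of $\zeta$; these sums can be controlled from above or below via a suitable sign condition imposed on $R$.

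For the upper bound, I would work with an admissible class of pairs $(R, \widehat{R})$ satisfying (i) $\widehat{R}(\alpha) \ge \mathbf{1}_{[b, b+\ell]}(\alpha)$ for $|\alpha|\ge 1$ (majorization only in the range where $F$ is not pointwise understood), together with (ii) a sign condition on $R$, essentially that $R$ decomposes as a point mass at the origin minus a nonnegative function, ensuring that off-diagonal contributions in \eqref{inversion} are non-positive. For such a pair, positivity of $F$ yields
\begin{equation*}
\int_b^{b+\ell} F(\alpha, T)\,\d\alpha \ \le\ \int_{|\alpha|\ge 1} \widehat{R}(\alpha)\, F(\alpha, T)\,\d\alpha \ =\ \int_{\mathbb{R}} \widehat{R}\,F\ -\ \int_{-1}^{1} \widehat{R}\,F.
\end{equation*}
The first right-hand term is at most $R(0) + o(1)$ by \eqref{inversion} and condition (ii): the diagonal $\gamma=\gamma'$ contributes $R(0)\,N(T)\cdot 2\pi/(T\log T) \sim R(0)$, while the off-diagonal is non-positive. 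The second right-hand term is evaluated by inserting \eqref{F formula}: since $T^{-2|\alpha|}\log T$ concentrates at the origin as $T \to \infty$,
\begin{equation*}
\int_{-1}^{1} \widehat{R}(\alpha)\, F(\alpha, T)\,\d\alpha \ =\ \widehat{R}(0) \ +\ \int_{-1}^{1}\widehat{R}(\alpha)\,|\alpha|\,\d\alpha \ +\ o(1).
\end{equation*}
Combining these bounds yields $\int_b^{b+\ell} F \le R(0) - \widehat{R}(0) - \int_{-1}^{1} \widehat{R}(\alpha)|\alpha|\,\d\alpha + o(1)$, and taking the infimum over the admissible class (after the linear-in-$\ell$ normalization inherited from scaling the majorant) identifies the universal constant ${\bf C}^+$ of \eqref{20210112_11:26}.

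The lower bound is symmetric: I would take $\widehat{R}$ minorizing $\mathbf{1}_{[b, b+\ell]}$ on $|\alpha|\ge 1$ with $R$ satisfying the reversed sign condition (so the off-diagonal in \eqref{inversion} is non-negative and $\int \widehat{R}\,F \ge R(0) + o(1)$), and take the supremum of the analogous chain of inequalities to recover ${\bf C}^-$ from \eqref{20210204_11:03}.

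The main obstacle is the extremal Fourier problem itself. The constraints combine one-sided majorization restricted to $|\alpha|\ge 1$ with a sign condition on the physical side of $R$, a setup of Beurling--Selberg flavor but more intricate on account of the half-open geometry and the simultaneous roles played by $R(0)$, $\widehat{R}(0)$, and the weighted moment $\int_{-1}^{1}\widehat{R}(\alpha)|\alpha|\,\d\alpha$ in the extremal functional. Closed-form extremizers are not to be expected, so the heart of the work is designing explicit near-optimal test configurations and carefully estimating these three quantities for them, which is the content of $\textsection$\ref{PfThm1_UB} and $\textsection$\ref{PfThm1_LB}. A secondary challenge is to show that the $o(1)$ errors remain uniform as $\ell \to \infty$, which requires controlled behavior of the test functions near the transition point $|\alpha|=1$ and verification that the extremal value indeed scales linearly in $\ell$.
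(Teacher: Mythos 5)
Your broad strategy---Fourier inversion via \eqref{inversion}, positivity of $F$, the explicit asymptotic \eqref{F formula} on $|\alpha|\le 1$, and a Beurling--Selberg-flavored extremal problem---is the right lens, and you correctly identify that near-optimal test configurations with a linear-in-$\ell$ cost are the crux. However, the mechanism you propose for controlling the pair correlation sum is not the one the paper uses, and it has a genuine gap.

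The problematic step is your condition (ii) and the deduction $\int \widehat{R}\,F \le R(0) + o(1)$. First, ``$R$ decomposes as a point mass at the origin minus a nonnegative function'' is incompatible with $R, \widehat{R}\in L^1$, which is needed for \eqref{inversion}; a Dirac mass has a non-decaying Fourier transform. Second, even in a weakened version where $\mathrm{Re}\,R(x)\le 0$ for $x\neq 0$, splitting \eqref{inversion} into diagonal and off-diagonal makes the diagonal $\frac{2\pi}{T\log T}\,R(0)\sum_{\gamma} m_\gamma^2$, and there is no unconditional control of $\sum m_\gamma^2$ by $N(T)$; so you cannot conclude $\int\widehat{R}\,F\le R(0)+o(1)$ without assuming near-simplicity of the zeros. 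The paper's upper bound deliberately avoids any diagonal/off-diagonal split: writing $\widehat{R}(\alpha)=\sum_j\widehat{g_j}(\alpha-\xi_j)$ with each $g_j\ge 0$ (the class $\mathcal{A}$), one has $\mathrm{Re}\,R(x)=\mathrm{Re}\sum_j e^{2\pi i\xi_j x}g_j(x)\le\sum_j g_j(x)=:G(x)$, hence $\int\widehat{R}\,F\le\int\widehat{G}\,F$; then $\widehat{G}\le 0$ off $[-1,1]$ lets one truncate to $\int_{-1}^{1}\widehat{G}\,F=\rho(G)+o(1)$. That is, the ``sign condition'' actually used is $\widehat{g_j}(\alpha)\le 0$ for $|\alpha|\ge 1$ together with $g_j\ge 0$, plus the modulus bound $|T^{i\theta}|\le 1$---not a sign on $R$ itself. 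The paper's construction (translates of a single $g$) gives $R(x)\approx g(x)D_M(2\pi x)$, whose sign oscillates, so your condition (ii) would exclude exactly the configurations that realize ${\bf C}^+$.

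Two further points. Your majorization is imposed only on $|\alpha|\ge 1$, while (EP1) requires $\sum_j\widehat{g_j}(\alpha-\xi_j)\ge\chi_{[0,\ell]}(\alpha)$ for all $\alpha$; this leads to a genuinely different extremal problem and a functional $R(0)-\widehat{R}(0)-\int_{-1}^1|\alpha|\widehat{R}$ that does not match $\rho(g)$ normalized by $\min P_{\widehat{g}}$ in \eqref{20210112_11:26}. And for the lower bound your symmetric sketch misses the key phenomenon behind ${\bf C}^-$: the constant $c_0=\min_{x}(\sin x)/x$ enters through the minimum of the Dirichlet kernel in the constraint \eqref{20210108_10:11} of (EP3). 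That insight, together with the multiplicity trick $\sum_\gamma m_\gamma\ge N(T)$ (which works for a lower bound but not an upper one), is what makes the paper's lower bound close to the conjectured $\ell$. Without the Dirichlet kernel structure and the careful handling of multiplicities, neither ${\bf C}^+$ nor ${\bf C}^-$ would come out of your setup.
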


We establish the bounds 
\begin{equation}\label{20210205_11:56}
0.9278 < {\bf C}^- \leq  {\bf C}^+ < 1.3302
\end{equation}
for these universal constants, which immediately leads to the following corollary. 

%\newb{Here is a typical theorem in our framework, and the corresponding corollary arising from \eqref{20210205_11:56}.}

\begin{corollary}\label{Thm1_20201215}
Assume RH and let $b \geq 1$. For large $\ell$, as $T \to \infty$, we have
\begin{equation}\label{20201215_00:21}
0.9278 \, \ell + o(1) 
\le  \int_b^{b+\ell}  F(\alpha,T) \, \d\alpha  \leq 1.3302\,\ell + o(1).
\end{equation}
\end{corollary}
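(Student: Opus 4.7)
The corollary is an immediate reformulation of Theorem~\ref{Thm1_20201215*} once the strict numerical inequalities in \eqref{20210205_11:56} are in hand. Concretely, since $\mathbf{C}^- > 0.9278$ and $\mathbf{C}^+ < 1.3302$ are strict, the plan is to pick $\varepsilon > 0$ so small that $\mathbf{C}^- - \varepsilon > 0.9278$ and $\mathbf{C}^+ + \varepsilon < 1.3302$, and then invoke Theorem~\ref{Thm1_20201215*} with this $\varepsilon$. The $o(1)$ remainder absorbs any slack between $(\mathbf{C}^\pm \mp \varepsilon)\,\ell$ and the stated constants $0.9278\,\ell$, $1.3302\,\ell$, so \eqref{20201215_00:21} follows in the regime ``for large $\ell$, as $T\to\infty$'' in which both statements are formulated.

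All the substantive content therefore rests on establishing the two inequalities in \eqref{20210205_11:56}. By the definitions appearing at \eqref{20210112_11:26} and \eqref{20210204_11:03}, the constants $\mathbf{C}^{\pm}$ arise as the extrema of two Fourier-analytic optimization problems that link Montgomery's identity \eqref{inversion} to the asymptotic \eqref{F formula} on the window $|\alpha|\le 1$. I would prove \eqref{20210205_11:56} by exhibiting explicit admissible test configurations in each optimization problem. For the upper bound $\mathbf{C}^+ < 1.3302$, the target is an even non-negative $g \in L^1(\R)$ that majorises the indicator of $[b,b+\ell]\cup[-b-\ell,-b]$ in the appropriate sense, and whose inverse Fourier transform is structured so that \eqref{inversion} can be used to estimate $\int g(\alpha)\,F(\alpha,T)\,\d\alpha$ from the asymptotic information available on $[-1,1]$. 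Natural candidates are Beurling--Selberg one-sided approximations to indicators of intervals, Fej\'er-kernel smoothings, and scaled combinations thereof; one evaluates the relevant functional on the best such candidate and reads off the numerical value $1.3302 - \eta$ for some $\eta > 0$. The lower bound $\mathbf{C}^- > 0.9278$ is handled dually, with a non-negative minorant.

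The principal obstacle is the Fourier extremal problem itself. Solving for $\mathbf{C}^{\pm}$ in closed form appears out of reach --- it is not even clear whether $\mathbf{C}^- = \mathbf{C}^+$ --- so both numerical inequalities must come from carefully chosen test functions whose objective functionals can be evaluated exactly. A secondary technical point is the $T^{-2|\alpha|}\log T$ spike in \eqref{F formula} near $\alpha = 0$, which contributes a constant-order term that must be balanced against the mass of the test function on $[-1,1]$ in the extremal problem; controlling this uniformly is what allows the finite-$\ell$ corrections to be absorbed into the $o(1)$ remainder of Theorem~\ref{Thm1_20201215*}.
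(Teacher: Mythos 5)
Your high-level reduction is exactly the paper's: Corollary \ref{Thm1_20201215} follows from Theorem \ref{Thm1_20201215*} once the strict numerical bounds $0.9278 < {\bf C}^-$ and ${\bf C}^+ < 1.3302$ in \eqref{20210205_11:56} are in hand, by choosing $\varepsilon$ small enough that ${\bf C}^- - \varepsilon > 0.9278$ and ${\bf C}^+ + \varepsilon < 1.3302$ (and then the stated inequality is simply implied pointwise; no $o(1)$ absorption is actually needed in that step, contrary to how you phrased it).

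The gap is in your plan for proving \eqref{20210205_11:56} itself. You describe ${\bf C}^\pm$ as extrema of problems in which a function $g$ majorises or minorises the indicator of $[b,b+\ell]\cup[-b-\ell,-b]$, and you suggest Beurling--Selberg one-sided approximants and Fej\'er-kernel smoothings as test functions. Neither matches the structure of (EP4) and (EP5). The constant ${\bf C}^+$ in \eqref{20210112_11:26} is the infimum over a \emph{single} bandlimited $g\in\mathcal A_1$ of the ratio $\rho(g)/\min_{0\le\alpha\le 1}|P_{\widehat g}(\alpha)|$, where $P_{\widehat g}$ is the periodization of $\widehat g$; translates of one fixed $g$ at integer spacing cover the long interval. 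The constant ${\bf C}^-$ in \eqref{20210204_11:03} is the supremum over $g\in\mathcal A$ of $\big(g(0)+c_0(\rho(g)-g(0))\big)/K(g)$, where $c_0=\min_x(\sin x)/x$ enters through the fact that long Dirichlet kernels cannot be too negative; no minorant of an indicator appears. More concretely, the test functions you name are quantitatively insufficient: the Fej\'er kernel $g=K_1$ gives only $\rho(K_1)/\cdots = 4/3 = 1.3333\ldots$ for (EP4) and $1+c_0/3 = 0.92758\ldots$ for (EP5), which miss the targets $1.3302$ and $0.9278$ on the wrong side. The paper gets past the Fej\'er baseline by writing $g=|h|^2$ via Krein's decomposition and searching over low-degree polynomial perturbations of $\widehat h$ on $[-\frac12,\frac12]$; the explicit choices $\widehat h(\alpha)=(10+2\alpha^2-35\alpha^4)\chi_{[-\frac12,\frac12]}(\alpha)$ and $\widehat h(\alpha)=(5-\alpha^2)\chi_{[-\frac12,\frac12]}(\alpha)$ produce the verified values $1.33017\ldots$ and $0.92781\ldots$. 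Without this refinement your argument stalls one decimal place short of the claimed constants. Finally, the $T^{-2|\alpha|}\log T$ spike is handled exactly by the $\widehat g(0)$ term inside $\rho(g)$, not by balancing against an error, and the finite-$\ell$ corrections are absorbed by $\varepsilon\ell$, not by the $o(1)$ in $T$.
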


We use this theorem to give information about the distribution of primes in short intervals. Furthermore, the work of Radziwi\l\l \, \cite{R} illustrates a connection between Theorem \ref{Thm1_20201215*} and the theoretical limitations of mollifying the Riemann zeta-function on the critical line (see \textsection \ref{Rad_sec}). Previously, the best known bounds in \eqref{20201215_00:21} were due to Goldston \cite[Lemma A]{G2} and Goldston and Gonek \cite[Lemma]{GG}, respectively, where an estimate with $\frac{1}{3}$ in place of $0.9278$ in the lower bound and $2$ in place of $1.3302$ in the upper bound can be established for sufficiently large $\ell$ by adding up integrals of length 2.

\smallskip

Theorem \ref{Thm1_20201215*} and Corollary \ref{Thm1_20201215} are proved in Section \ref{Sec2_integral_F}, which actually brings a full discussion on effective bounds for each $b \geq 1$ and $\ell >0$. This section is of utmost importance for us, as it brings the foundations on the extremal problems in Fourier analysis that are connected to bounding the integral of $F(\alpha)$, and how one can properly explore them. For instance, the proof of the lower bound in \eqref{20201215_00:21}, which treads strikingly close to the conjectured value of $\ell+o(1)$ for large $\ell$, relies partly on the insight that Dirichlet kernels cannot be large and negative. In fact, letting 
$$c_0 := \min_{x\in\mathbb{R}} \frac{\sin x}{x} = -0.21723\ldots\,,$$
we see how the number 
\begin{equation}\label{20210107_14:18}
1 + \frac{c_0}{3} = 0.92758\ldots
\end{equation}
appears naturally in our discussion. We first obtain \eqref{20201215_00:21} with any constant smaller than \eqref{20210107_14:18} multiplying $\ell$ in the lower bound, and any constant greater than $4/3$ multiplying $\ell$ in the upper bound. A minor, yet conceptually important, improvement leads us to sharpen these multiplying factors to $0.9278$ in the lower bound and to $1.3302$ in the upper bound. Our general theoretical framework may be amenable to further slight numerical refinements through the search of more complicated test functions. A posteriori, the reader will notice that the fundamental pillar of the Section  \ref{Sec2_integral_F} is Theorem \ref{Prop_20201217_09:51}, a powerful general result that governs all the others in the section, including Theorem \ref{Thm1_20201215*} and Corollary \ref{Thm1_20201215}. We need a little bit of preparation in order to present it.

\subsubsection{Primes in short intervals } In \eqref{20190715_05:44} and \eqref{20190705_11:49am} below, we properly define the precise constants ${\bf L}^\pm$ which can be approximated by
\begin{align*}
{ \bf L^-} = 0.9028\ldots \ \ \ {\rm and} \ \ \ \ { \bf L^+} = 1.0736\ldots.
\end{align*}
Using the definitions of ${\bf L}^{\pm}$, a Tauberian argument, and the estimates for the integral of $F(\alpha)$ in bounded intervals, we deduce upper and lower bounds for the (weighted) variance of primes in short intervals.
\begin{theorem}\label{Thm_20201215_01:27*}
Assume RH and let $\varepsilon >0$ be an arbitrary number. For large $\beta$, as $T \to \infty$, we have
\begin{equation*}%\label{20201208_18:04*}
 \Big(\big({\bf L}^- {\bf C}^- - \varepsilon\big)\beta + o(1)\Big)\frac{\log^2T}{T} \le J(\beta,T) \le \Big( \big({\bf L}^+ {\bf C}^+ + \varepsilon\big)\beta + o(1)\Big)  \frac{\log^2T}{T}.
\end{equation*}
\end{theorem}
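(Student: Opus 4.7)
The approach is to relate $J(\beta,T)$ to a weighted integral of Montgomery's function $F(\alpha,T)$ and then leverage Theorem \ref{Thm1_20201215*} via a Tauberian-type argument in $\alpha$.

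First, by applying the explicit formula for $\psi$ under RH, expanding the square in the integrand of \eqref{20200918_12:11}, and integrating against $\mathrm{d}x/x^2$ over $[1,T^\beta]$, one obtains (as in Goldston--Mueller \cite{GaMu} and Goldston--Gonek \cite{GG}) a representation
\[
J(\beta,T) \;=\; \frac{\log^2 T}{T}\int_0^{\infty} F(\alpha,T)\,K_\beta(\alpha)\,\mathrm{d}\alpha \;+\; (\text{lower-order terms}),
\]
where $K_\beta(\alpha)$ is an explicit non-negative kernel arising from a Fourier transform of a function built out of $(1+1/T)^\rho - 1$ against the weight $w(u)=4/(4+u^2)$ that appears in the definition of $F$. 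The kernel $K_\beta$ is slowly varying on unit scale in $\alpha$ and satisfies $\int_0^{\infty} K_\beta(\alpha)\,\mathrm{d}\alpha \asymp \beta$, which is what produces the linear-in-$\beta$ main term.

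Next, split the $\alpha$-integral at $\alpha = 1$. On $[0,1]$, use the Goldston--Montgomery asymptotic \eqref{F formula} to compute the contribution explicitly; this portion contributes $O_\beta(\log^2 T/T)$, independent of $\beta$ to leading order, and is therefore negligible compared with $\beta\log^2 T/T$ as $\beta \to \infty$. On $(1,\infty)$, partition the range into consecutive intervals $[b,b+\ell]$ with $\ell$ large but fixed, apply Theorem \ref{Thm1_20201215*} on each piece, and reassemble using the slow variation of $K_\beta$ to obtain
\[
\int_1^{\infty} F(\alpha,T)\,K_\beta(\alpha)\,\mathrm{d}\alpha \;\le\; \bigl(\mathbf{C}^+ + \tfrac{\varepsilon}{2}\bigr)\int_1^{\infty} K_\beta(\alpha)\,\mathrm{d}\alpha \;+\; o(1),
\]
and analogously from below with $\mathbf{C}^-$. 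The constants $\mathbf{L}^\pm$ from \eqref{20190715_05:44} and \eqref{20190705_11:49am} then appear as the extremal Fourier-analytic constants that control the ratio $\bigl(\int_1^{\infty} K_\beta(\alpha)\,\mathrm{d}\alpha\bigr)/\beta$, optimised over the admissible choices of test configuration inside the representation of $J(\beta,T)$. Collecting the two factors gives the product $\mathbf{L}^\pm\mathbf{C}^\pm$ as the asymptotic leading constant.

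The main obstacle I expect is the Tauberian/averaging step: one must show that replacing $F(\alpha,T)$ inside $\int K_\beta(\alpha) F(\alpha,T)\,\mathrm{d}\alpha$ by its running average on windows of length $\ell$ costs only $O(\varepsilon)$ in the leading constant, uniformly for $\beta$ large, and that the dependence on $\ell$ can be sent to infinity last. This is precisely the Fourier-optimization problem whose extremal values define $\mathbf{L}^\pm$. A secondary difficulty is verifying that the lower-order terms from the explicit formula (coming from truncation of the sum over zeros and from the integral range away from $x \asymp T^\beta$) are genuinely $o(\beta \log^2 T / T)$; this requires mean-value estimates for Dirichlet polynomials over zeros of $\zeta(s)$ in the style of \cite{GG,GaMu,GM}.
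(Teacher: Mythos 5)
Your high-level architecture is right --- $J(\beta,T)$ is controlled by integrals of $F(\alpha,T)$ over $[1,\beta]$, Theorem \ref{Thm1_20201215*} supplies the factor ${\bf C}^\pm$, and a Tauberian step supplies ${\bf L}^\pm$ --- but the mechanism you propose for the Tauberian step is misidentified, and this is a genuine gap. There is no usable representation $J(\beta,T)=\frac{\log^2T}{T}\int_0^\infty F(\alpha,T)\,K_\beta(\alpha)\,\d\alpha+(\text{lower order})$ with an explicit kernel $K_\beta$: if such an identity existed, both bounds would be governed by the single quantity $\int_0^\infty K_\beta$, and there would be no room for two \emph{different} loss factors ${\bf L}^-<1<{\bf L}^+$. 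What is actually available (after sandwiching $J(\beta,T)-J(b,T)$ between weighted variances built from Schwartz functions $\widehat g$ approximating $\chi_{[b,\beta]}$, as in \eqref{1eq}) is the Goldston--Gonek identity \eqref{2eq}, expressing the weighted variance as $\frac{2}{\pi}\log^2T\int_0^\infty\big(\frac{\sin(\kappa t)}{t}\big)^2f(t,\eta)\,\dt$ with $f(t,\eta)$ a sum over zeros, together with the fact that $\int_0^Tf(t,\eta)\,\dt=2T\int_0^\infty F(\alpha,T)|\widehat g(\alpha)|^2\,\d\alpha+o(T)$ in a suitable range of $T$ versus $\eta$. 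Because Theorem \ref{Thm1_20201215*} yields only one-sided \emph{bounds}, not asymptotics, for $\int_0^T f(t,\eta)\,\dt$, one cannot run Goldston's Lemma 2 of \cite{G} losslessly: one must first replace the Fej\'er kernel $(\sin x/x)^2$ in the $t$-variable by \emph{monotone} majorants/minorants $g^\pm$ before integrating by parts against $K(t,\eta)=\int_0^t f$. The constants ${\bf L}^\pm$ in \eqref{20190715_05:44}--\eqref{20190705_11:49am} are precisely the ratios $\int_0^\infty g^\pm\big/\int_0^\infty(\sin x/x)^2$ for the optimal (sunrise) monotone envelopes of (EP6); they do not arise from the ratio $\big(\int_1^\infty K_\beta\big)/\beta$, nor from averaging $F$ on windows of length $\ell$ (that averaging is exactly what ${\bf C}^\pm$ already encodes). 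This sunrise-approximation device --- Lemma \ref{31_08_2:02am} and Theorem \ref{Primes} in the paper --- is the missing ingredient; without it your argument produces no factors ${\bf L}^\pm$ at all.

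Once the two-sided inequality of Theorem \ref{Primes} is in hand, the rest of what you describe matches the paper: take $b=1$, use \eqref{asymp} and \eqref{F formula} to dispose of $J(1,T)$ and of the range $\alpha\in[0,1]$, apply Theorem \ref{Thm1_20201215*} with $\ell=\beta-1$ (a single long interval suffices; no partition into windows is needed), and absorb the bounded leftover terms into $\varepsilon\beta$ for $\beta$ large.
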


The multiplying factors ${\bf L}^{\pm}$ arise from what we call {\it sunrise approximations}\, for the Fej\'er kernel. Using the bounds for ${\bf C}^\pm$ in \eqref{20210205_11:56}, we deduce the following result. % are led to the following corollary. 

\begin{corollary}\label{Thm_20201215_01:27}
Assume RH. For large $\beta$, as $T \to \infty$, we have
\begin{equation}\label{20201208_18:04}
\big( 0.8376 \, \beta +o(1) \big) \frac{\log^2T}{T} \le J(\beta,T) \le \big( 1.4283 \, \beta+o(1) \big) \frac{\log^2T}{T}.
\end{equation}
\end{corollary}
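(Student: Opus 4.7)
The statement is a direct arithmetical consequence of Theorem~\ref{Thm_20201215_01:27*} combined with the numerical bounds on the four universal constants that appear there. The plan is simply to substitute the best available bounds on ${\bf L}^\pm$ and ${\bf C}^\pm$ into the conclusion of Theorem~\ref{Thm_20201215_01:27*} and then choose the auxiliary parameter $\varepsilon$ small enough to absorb the small slack.

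In more detail, I would start from
\[
 \Big(\big({\bf L}^- {\bf C}^- - \varepsilon\big)\beta + o(1)\Big)\frac{\log^2T}{T} \le J(\beta,T) \le \Big( \big({\bf L}^+ {\bf C}^+ + \varepsilon\big)\beta + o(1)\Big)  \frac{\log^2T}{T}
\]
granted by Theorem~\ref{Thm_20201215_01:27*}. Using the numerical approximations ${\bf L}^- = 0.9028\ldots$ and ${\bf L}^+ = 1.0736\ldots$ given in the preceding paragraph, together with the bounds $0.9278 < {\bf C}^- \le {\bf C}^+ < 1.3302$ recorded in \eqref{20210205_11:56}, a short computation gives
\[
{\bf L}^- \cdot {\bf C}^- > 0.9028 \cdot 0.9278 = 0.83761\ldots > 0.8376,
\]
and
\[
{\bf L}^+ \cdot {\bf C}^+ < 1.0737 \cdot 1.3302 = 1.42823\ldots < 1.4283.
\]
Since both inequalities are strict, one can choose $\varepsilon>0$ small enough so that ${\bf L}^- {\bf C}^- - \varepsilon > 0.8376$ and ${\bf L}^+ {\bf C}^+ + \varepsilon < 1.4283$. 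Substituting these choices into the bounds from Theorem~\ref{Thm_20201215_01:27*} yields \eqref{20201208_18:04}.

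The deduction at the level of the corollary itself is therefore essentially a tight piece of arithmetic, with the numerics only just making the stated decimals. All the real difficulty has been pushed into the two ingredients that are being invoked: the bounds \eqref{20210205_11:56} for ${\bf C}^\pm$, which come from the Fourier extremal problems analyzed in Section~\ref{Sec2_integral_F}, and Theorem~\ref{Thm_20201215_01:27*} itself, whose proof requires both the estimates for $\int_b^{b+\ell} F(\alpha,T)\,\d\alpha$ and the sunrise approximation scheme for the Fej\'er kernel that produces the explicit constants ${\bf L}^\pm$. The main obstacle, then, is not the present corollary but the precision of these inputs: if any of the four constants shifts in the wrong direction, the decimal constants $0.8376$ and $1.4283$ would need to be adjusted accordingly.
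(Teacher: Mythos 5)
Your proposal is correct and follows essentially the same route as the paper: the authors likewise deduce the corollary from Theorem~\ref{Thm_20201215_01:27*} by noting ${\bf L}^+ {\bf C}^+ < {\bf L}^+(1.3302) < 1.4283$ and ${\bf L}^- {\bf C}^- > {\bf L}^-(0.9278) > 0.8376$, and your arithmetic checks out. The only cosmetic difference is that the paper phrases the conclusion in terms of the admissible constants $D^{\pm}$ in \eqref{MontgomeryBound}, which is equivalent to your choice of $\varepsilon$.
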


Previously, the best known bounds in \eqref{20201208_18:04} were implicit in the work of Goldston and Gonek \cite{GG}, yielding $0.153$ in place of $0.8376$ in the lower bound, and $10.824$ in place of $1.4283$ in the upper bound. In Section \ref{Sec_PSI_new}, we present a full discussion on bounds for $J(\beta,T)$ for each $\beta >1$.

\smallskip

\subsubsection{The second moment of the logarithmic derivative of $\zeta(s)$} Our next result establishes the sharpest known bounds for $I(a,T)$, for any fixed $a>0$, assuming RH. Our upper bound for $I(a,T)$ uses a formula of Goldston, Gonek, and Montgomery \cite[Theorem 1]{GGM} combined with the solution of the Beurling--Selberg extremal problem for the Poisson kernel given in \cite{CChiM, CLV}. This argument is inspired by the previous calculations in \cite{ChSo} and \cite{CCM}, where explicit formula methods were combined with the solutions of the Beurling--Selberg extremal problem to give the sharpest known bounds for the modulus and argument of $\zeta(s)$ on the critical line, assuming RH. Our lower bound for $I(a,T)$ also uses \cite[Theorem 1]{GGM} together with a method developed in \cite[Theorem 7]{CCLM} to prove the existence of small gaps between the non-trivial zeros of $\zeta(s)$ using known pair correlation estimates.

\begin{theorem} \label{log zeta}
Assume RH. Then, for $T^{-1}(\log T)^{5/2}\leq a \le (\log T)^{1/4}/(\log\log T)^{1/2}$, we have
\[
\big(1+o(1)\big) \, U^-(a) \,T\log^2 T  \le I(a,T) \leq \big(1+o(1)\big) \, U^+(a) \,T\log^2 T 
\]
as $T \to \infty$, where 
\[
\begin{split}
U^-(a)& = \frac{1-(1+2 a) \, e^{-2 a}}{4 a^{2}} + \left(\frac{1}{2 a}+\frac{1}{\sqrt{3}}\right) e^{-2 a(1+1 / \sqrt{3})},
\\
U^+(a)&= \dfrac{\coth a}{4a^2}-\dfrac{(\csch\, a)^2}{4a} + \dfrac{\coth a}{2}-\dfrac{1}{2},
\end{split}
\]
and the terms of $o(1)$ are $O\big(1/\sqrt{\log\log T}\big)$.
\end{theorem}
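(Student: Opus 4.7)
The plan is to apply the identity of Goldston, Gonek, and Montgomery [GGM, Theorem 1] which, on RH and for $a$ in the stated range, gives
\[
I(a,T) = \big(1+O(1/\sqrt{\log\log T})\big)\, T\log^2 T \int_{-\infty}^{\infty} K_a(\alpha)\, F(\alpha,T)\,\d\alpha,
\]
where $K_a(\alpha) := \tfrac12 e^{-2a|\alpha|}$, thereby reducing matters to upper and lower bounds for $\int K_a F\,\d\alpha$. Note that this kernel is consistent with the conjectured asymptotic from (IV): if one were to insert the conjectured value $F(\alpha,T)\to \min(|\alpha|,1)$, a direct calculation recovers $\frac{1-e^{-2a}}{4a^2}$. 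The non-negativity $K_a\ge 0$ and $F\ge 0$ are the key structural inputs.

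Split the integral at $|\alpha|=1$. On $[-1,1]$, substitute the Goldston-Montgomery asymptotic \eqref{F formula}: this yields a main contribution $\int_0^1 \alpha e^{-2a\alpha}\,\d\alpha = \frac{1-(1+2a)e^{-2a}}{4a^2}$ (appearing in both $U^\pm$), plus a lower-order contribution from the $T^{-2|\alpha|}\log T$ piece that concentrates near $\alpha=0$. For the upper bound on the complementary region $|\alpha|>1$, invoke the solution of the Beurling-Selberg extremal majorant problem for the two-sided exponential kernel $K_a$, worked out in [CChiM, CLV]. This furnishes an entire function $m^+$ of exponential type $2\pi$ with $m^+(\alpha) \ge K_a(\alpha)$ on $\R$ and $\widehat{m^+}$ supported in $[-1,1]$. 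Using $F\ge 0$, we have $\int K_a F \le \int m^+ F$; applying the Fourier inversion identity \eqref{inversion} and the explicit form of $F(\alpha,T)$ on $[-1,1]$, all pieces combine via standard hyperbolic identities into the closed form $U^+(a) = \frac{\coth a}{4a^2} - \frac{\csch^2 a}{4a} + \frac{\coth a}{2} - \frac{1}{2}$.

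For the lower bound on the $|\alpha|>1$ region, only the trivial pointwise bound $F\ge 0$ is available, so additional arithmetic input is required. Here I would adapt the approach of [CCLM, Theorem 7], which exploits known pair correlation estimates to detect the existence of small gaps between consecutive zeros of $\zeta(s)$. Specifically, I would construct an auxiliary non-negative test function $R$ (depending on $a$) with $\widehat{R}$ of appropriate compact Fourier support and apply \eqref{inversion}; combined with the bounds on $\int_{b}^{b+\ell}F(\alpha,T)\,\d\alpha$ from Section \ref{Sec2_integral_F}, this yields a positive lower bound on the missing piece of $\int K_a F$. Optimization over a single free parameter in $R$ is attained at $1/\sqrt{3}$, producing exactly the residual term $\left(\tfrac{1}{2a} + \tfrac{1}{\sqrt{3}}\right) e^{-2a(1+1/\sqrt{3})}$ in $U^-(a)$.

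The main obstacle is uniformity in $a$ across the full range $T^{-1}(\log T)^{5/2} \le a \le (\log T)^{1/4}/(\log\log T)^{1/2}$. The three layers of approximation -- the GGM formula, the asymptotic \eqref{F formula} on $[-1,1]$ with its $O(\sqrt{\log\log T/\log T})$ error, and the Fourier-extremal construction -- each carry $a$-dependent errors that must be controlled simultaneously. Care is especially needed as $a\to 0$, where $K_a$ grows and the $T^{-2|\alpha|}\log T$ contribution becomes delicate to integrate, and as $a$ grows large, where the extremal functions must remain near-optimal; the combined error must remain $O(1/\sqrt{\log\log T})$ throughout the range.
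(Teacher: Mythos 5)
Your identification of the key ingredients (the GGM formula, Beurling--Selberg majorants, and CCLM-style lower bounds on the tail of $F$) is correct, but the way you assemble them contains genuine errors. First, the GGM formula (\cite[Theorem 1]{GGM}, restated in the paper as Lemma~\ref{GGM_Lemma}) does \emph{not} give $I(a,T)=(1+o(1))\,T\log^2 T\int K_a F\,\d\alpha$; it expresses $I(a,T)$ as $\log T\sum_{\gamma,\gamma'}h_{a/\pi}\big((\gamma-\gamma')\tfrac{\log T}{2\pi}\big)w(\gamma-\gamma')$ \emph{minus} $\frac12\int_1^T\log^2(t/2\pi)\,\dt\sim\frac12 T\log^2 T$, plus errors. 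That subtracted $\frac12 T\log^2T$ is a main-order term. Correspondingly, the $T^{-2|\alpha|}\log T$ piece of \eqref{F formula} is not a ``lower-order contribution that concentrates near $\alpha=0$'': its integral against $K_a$ is $\approx\frac12$, exactly cancelling the subtracted term. Writing your identity without the subtraction and treating the near-origin piece as negligible would shift your answer by $\tfrac12 T\log^2 T$ and contradict (IV). Second, the Beurling--Selberg step is applied on the wrong side of the Fourier transform. The useful majorization is $h_{a/\pi}(x)\le m_{a/\pi}(x)$ on the \emph{physical} side, where $m_{a/\pi}$ is the Carneiro--Littmann--Vaaler majorant of the \emph{Poisson kernel} $h_b(x)=b/(b^2+x^2)$, whose Fourier transform $\widehat{m}_{a/\pi}$ has support in $[-1,1]$; then $\sum h\le\sum m$ by positivity, and \eqref{inversion} converts $\sum m$ into $\frac{T\log T}{2\pi}\int_{-1}^1\widehat{m}_{a/\pi}(\alpha)F(\alpha)\,\d\alpha$, which is computable via \eqref{F formula}. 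Your proposal instead majorizes the \emph{exponential} $K_a(\alpha)$ on the frequency side by some $m^+(\alpha)$ with $\widehat{m^+}$ supported in $[-1,1]$. That gives $\int K_aF\le\int m^+F$, but $m^+$ itself is \emph{not} compactly supported (it is an entire function of exponential type), so the latter integral is not localized to $[-1,1]$ and cannot be evaluated from \eqref{F formula}; passing back through \eqref{inversion} just returns you to a double sum over zeros against a compactly supported kernel, which is what you were trying to bound in the first place.

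Your lower-bound sketch is also not quite the paper's mechanism. The paper does not need the bounds on $\int_b^{b+\ell}F$ from Section~\ref{Sec2_integral_F}, and it does not optimize a free parameter in a test function $R$. Instead it invokes Lemma~\ref{CCLM_lemma} (a refinement of \cite[Lemma~17]{CCLM}), which under RH gives the pointwise lower bound $\frak{I}(\xi)=\int_1^\xi(\xi-\alpha)F(\alpha)\,\d\alpha\ge\frac{\xi^2}{2}-\xi+\frac13+O\big(\xi^2\sqrt{\log\log T/\log T}\,\big)$, and then integrates $\int_1^\infty e^{-2a\alpha}F(\alpha)\,\d\alpha=4a^2\int_1^\infty\frak{I}(\alpha)e^{-2a\alpha}\,\d\alpha$ by parts twice. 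The $1/\sqrt{3}$ is not an optimization parameter but the root of $\tfrac{\xi^2}{2}-\xi+\tfrac13=0$: the quadratic lower bound is nonnegative only for $\xi\ge 1+1/\sqrt{3}$, so that is where the integration begins (using only $\frak{I}\ge 0$ on $[1,1+1/\sqrt{3}]$). Without this specific lemma and the double integration by parts, the residual term $\big(\tfrac{1}{2a}+\tfrac{1}{\sqrt{3}}\big)e^{-2a(1+1/\sqrt{3})}$ in $U^-(a)$ does not emerge.
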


\smallskip

To compare Theorem \ref{log zeta} to the conjectural asymptotic formula in (IV), let
\[
G^\pm(a) = U^\pm(a) \bigg/ \left(\frac{1-e^{-2a}}{4 a^2}\right).
\]
We then have $G^-(0^+)=1$, $G^+(0^+)=4/3$, $\min_{a>0} G^-(a) = 0.899\ldots$ attained at $a_0=0.998\ldots$, and $\max_{a>0} G^+(a) = 1.434\ldots$ attained at $a_0=0.620\ldots$ . Both $G^\pm(a) \to 1$ rapidly as $a \to \infty$, for example $G^-(a) \ge 0.999$ if $a \ge 4.55$ and $G^+(a) \le 1.001$ if $a \ge 5.83$. See Figure \ref{figure1}. Assuming RH, in the range $T^{-1}\log^3 T \leq a \ll 1$, Goldston, Gonek, and Montgomery \cite{GGM} had previously proved that 
\[
\big(1+o(1)\big) \, V^-(a) \,T\log^2 T  \le I(a,T) \leq \big(1+o(1)\big) \, V^+(a) \,T\log^2 T,
\]
where
\[
V^-(a)=\frac{1\!-\!(1\!+\!2 a) \, e^{-2 a}}{4 a^{2}}+\frac{2}{3\,(e^{6 a}\!-\!e^{2 a})} \ \text{ and } \ V^+(a)=\frac{1\!-\!(1\!+\!2 a) \, e^{-2 a}}{4 a^{2}}+\frac{29}{12\,(e^{2 a}\!-\!1)}.
\]
The bounds in Theorem \ref{log zeta} are sharper for any fixed $a>0$ %\footnote{As $a\to\infty$, Theorem \ref{log zeta} gives an upper bound of $\left(\frac{1}{4a^2}+e^{-2a} +O\Big(\frac{e^{-2a}}{a}\Big) \right)T\log^2T$ while GGM gives $\left(\frac{1}{4a^2}+\frac{29}{12}e^{-2a} +O\Big(\frac{e^{-2a}}{a}\Big) \right)T\log^2T$ } 
and substantially better for small $a$. See Figure \ref{figure2}. 

\subsubsection{Hilbert spaces and the pair correlation of zeta zeros} In Appendix B, we revisit the framework of \cite{CCLM} to find the sharp form of an embedding between two Hilbert spaces of entire functions naturally connected to Montgomery's pair correlation conjecture. Using tools from complex analysis, interpolation, and variational methods, we are led to the intriguing result presented in Theorem \ref{Thm_20201218_14:43}.

\begin{figure} 
\includegraphics[scale=.4]{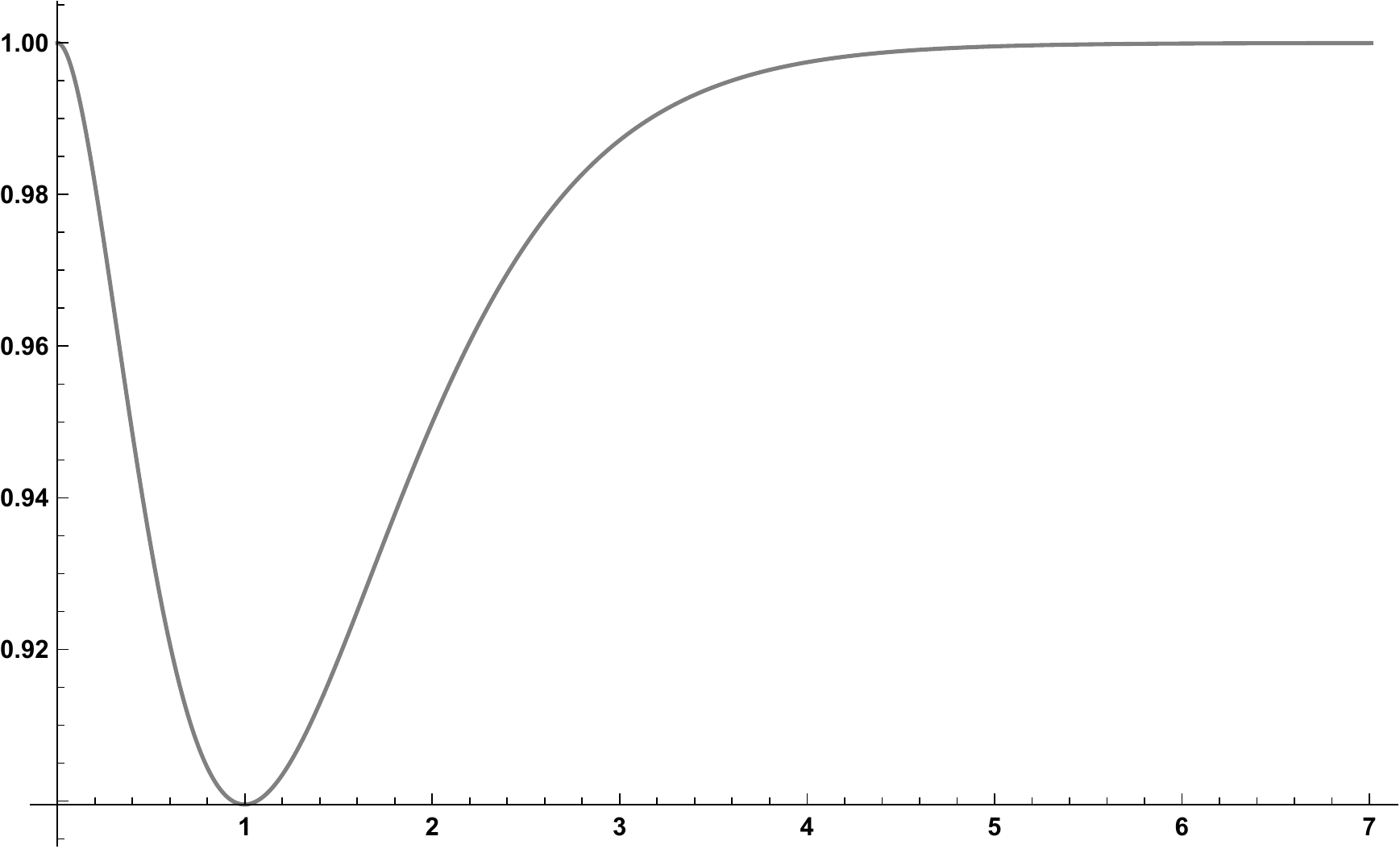} \qquad 
\includegraphics[scale=.4]{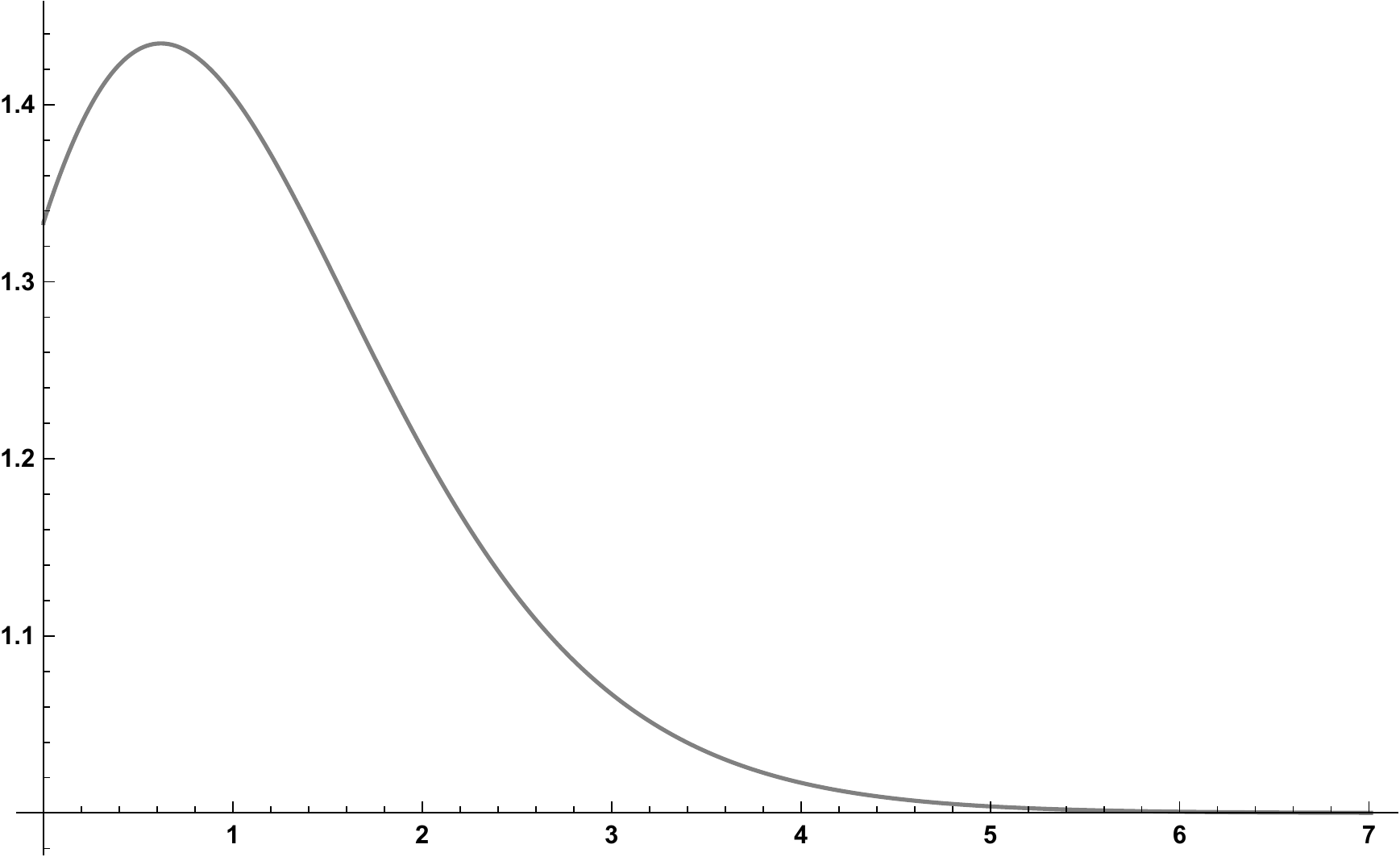} 
\caption{Plots of $G^-(a)$ and $G^+(a)$ for $0\le a\le 7$. }
\label{figure1}
\end{figure}

\begin{figure} 
	\includegraphics[scale=.4]{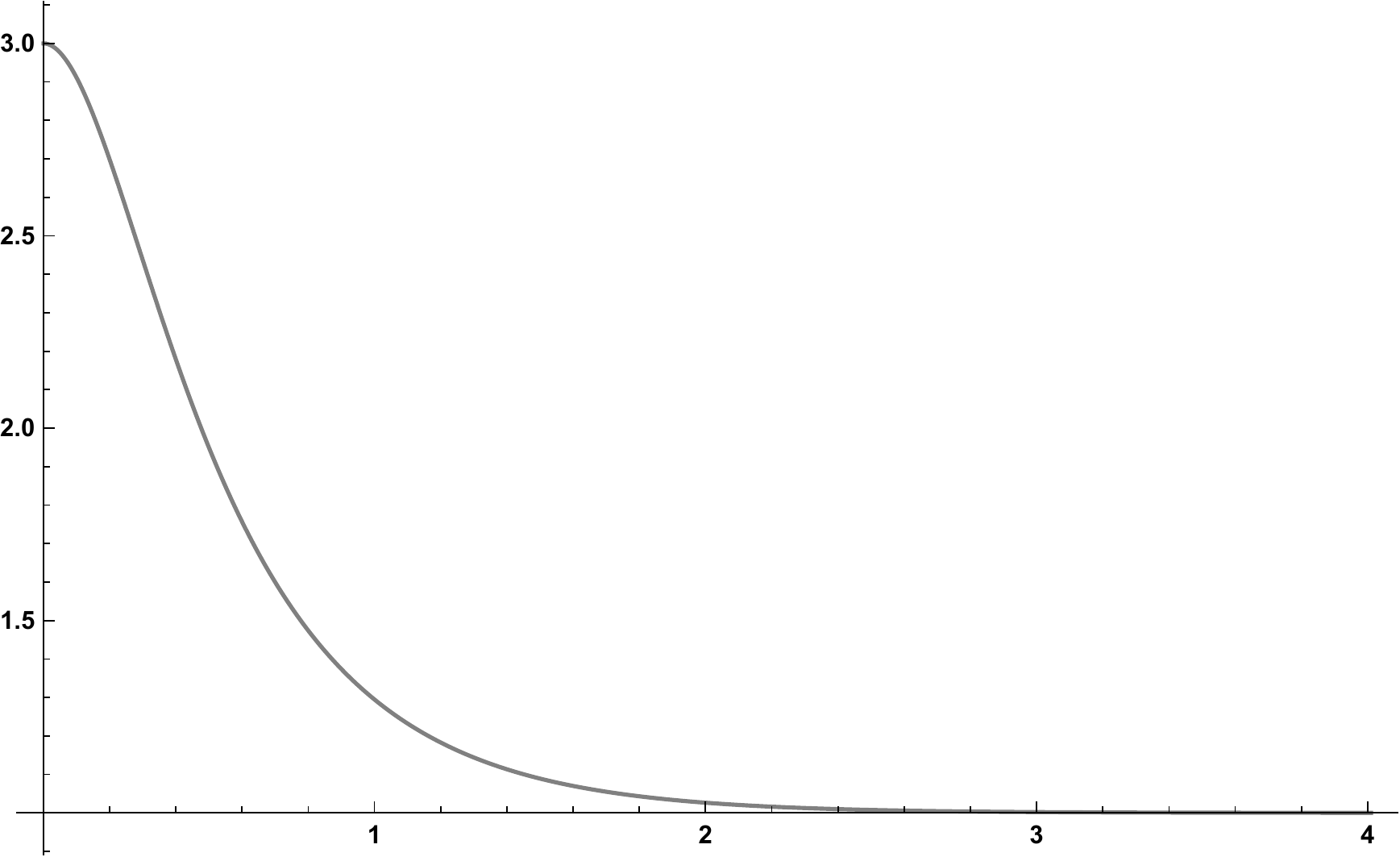} \qquad 
	\includegraphics[scale=.4]{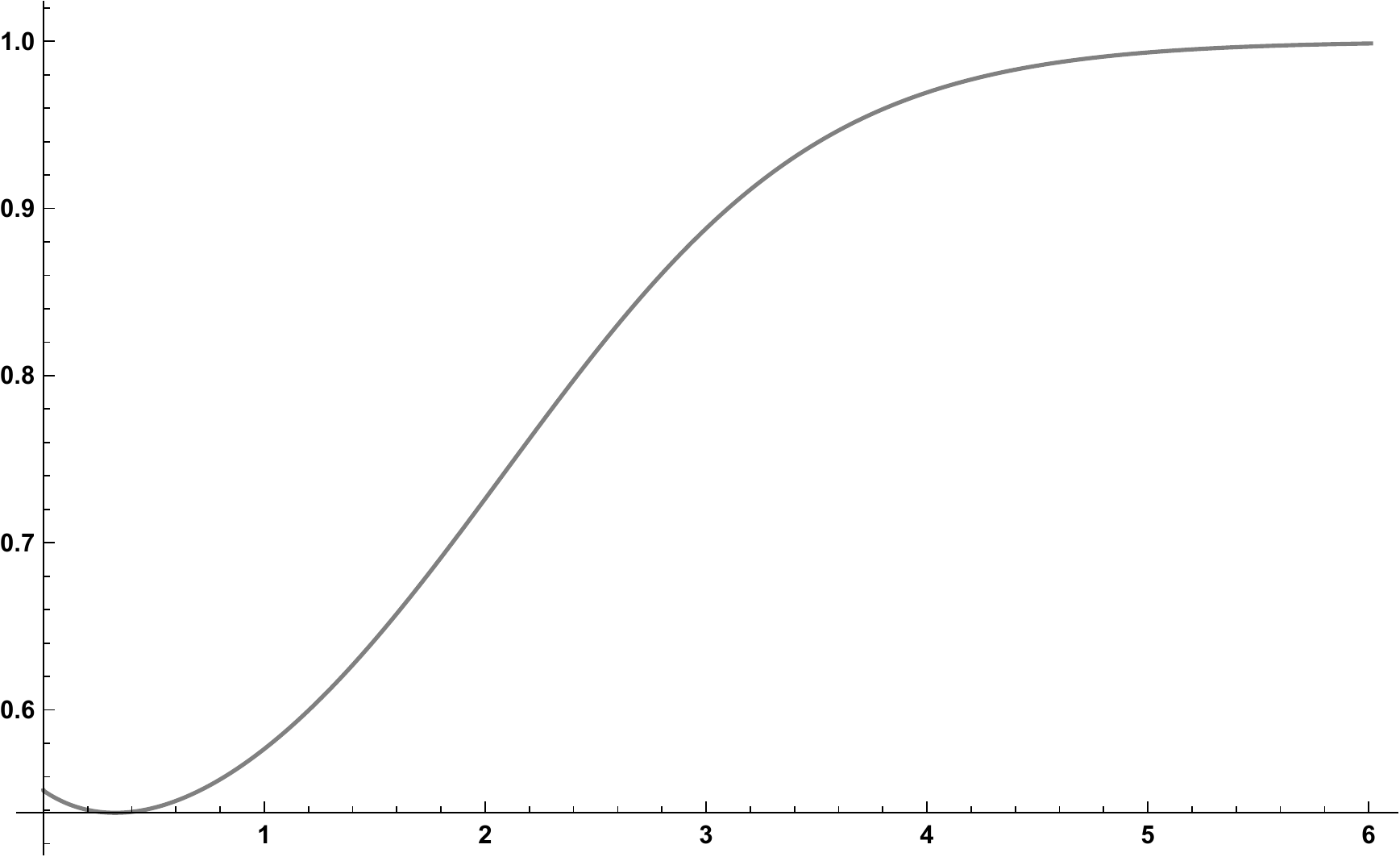} 
	\caption{Plots of $U^-(a)/V^-(a)$ for $0\le a\le 4$ and $U^+(a)/V^+(a)$ for $0\le a\le 6$. }
	\label{figure2}
\end{figure}

\subsection{Notation} Throughout the paper, $\lfloor x \rfloor$ denotes the largest integer that is less than or equal to $x$; $\lceil x \rceil$ denotes the smallest integer that is greater than or equal to $x$; and $\{x\} = x - \lfloor x \rfloor$ denotes the fractional part of $x$. We also write $x_+ := \max\{x,0\}$ and $\chi_{E}$ for the characteristic function of a set $E$. The real part of complex number $z$ is denoted by ${\rm Re}(z)$ and its imaginary part by ${\rm Im}(z)$.

\pagebreak 

\section{The integral of $F(\alpha)$ in bounded intervals}\label{Sec2_integral_F}

\subsection{Fourier optimization} \label{Fourier_Opt}We start with a broad principle to generate upper and lower bounds for the integral of $F(\alpha)$ in bounded intervals. This is motivated by some particular constructions %already present in the works of Goldston \cite{G2} and Goldston and Gonek \cite{GG}, and we now consider matters in a more general framework.
of Goldston \cite{G2} and Goldston and Gonek \cite{GG}, though we now set up the problem in a more general framework.

\smallskip

Throughout the paper we let $\mathcal{A}$ be the class of continuous, even, and non-negative functions $g \in L^1(\R)$ such that $\widehat{g}(\alpha) \leq 0$ for $|\alpha| \geq 1$. One can check, via approximations of the identity, that if $g \in \mathcal{A}$ then $\widehat{g} \in L^1(\R)$. For each $g \in \mathcal{A}$, we define the quantity 
\begin{equation}\label{20201217_11:41}
\rho(g) := \widehat{g}(0) +  \int_{-1}^1 \widehat{g}(\alpha)\,|\alpha| \, \mathrm{d}\alpha\,,
\end{equation}
which is always non-negative since $|\widehat{g}(\alpha)| \leq \widehat{g}(0)$ for all $\alpha \in \R$. In fact, \eqref{20201217_11:41} is strictly positive if $g \neq 0$. If $g \in \mathcal{A}$, from \eqref{inversion}, the fact that $F$ is non-negative, and \eqref{F formula}, we observe that
\begin{align}\label{20201216_11:09}
\begin{split}
  \frac{2\pi}{T \log T} \sum_{0<\gamma,\gamma'\le T} & g\!\left((\gamma-\gamma') \frac{\log T}{2\pi} \right)  w(\gamma-\gamma')  = \ \int_{-\infty}^{\infty}\widehat{g}(\alpha) \, F(\alpha,T) \, \mathrm{d}\alpha  \\
  & \leq \int_{-1}^{1}\widehat{g}(\alpha) \, F(\alpha,T) \, \mathrm{d}\alpha = \rho(g) + o(1) 
 \end{split}
\end{align}
as $T \to \infty$. We define $\mathcal{A}_0 \subset \mathcal{A}$ as the subclass of continuous, even, and non-negative functions $g \in L^1(\R)$ such that ${\rm supp}(\widehat{g}) \subset [-1,1]$. If $g \in \mathcal{A}_0$, then we have equality in \eqref{20201216_11:09}, and also the alternative representation
\begin{equation}\label{20210205_14:19}
\rho(g) = g(0) + \int_{-\infty}^{\infty} g(x) \left\{ 1 - \left( \frac{\sin \pi x}{\pi x}\right)^2
\right\} \dx\,,
\end{equation} 
which follows by Plancherel's theorem.

\subsubsection{Three extremal problems in Fourier analysis} We now introduce the following problems.

\subsubsection*{Extremal problem 1 {\rm (EP1)}} Let $\ell >0$. Consider a finite collection of functions $g_1, g_2, \ldots, g_N \in \mathcal{A}$ and points $\xi_1, \xi_2, \ldots, \xi_N \in \R$ such that 
\begin{equation}\label{20201216_11:47}
\sum_{j =1}^N \widehat{g_j} (\alpha - \xi_j) \geq \chi_{[0,\ell]}(\alpha)
\end{equation}
for all $\alpha \in \R$. Over all such possibilities, find the infimum
\begin{equation}\label{20210113_08:57}
\mathcal{W}^+(\ell) := \inf \sum_{j=1}^{N} \rho(g_j).
\end{equation}

\subsubsection*{Extremal problem 2 {\rm (EP2)}} Let $\ell >0$. Consider a finite collection of functions $g_1, g_2, \ldots, g_N \in \mathcal{A}$ and points $\xi_1, \xi_2, \ldots, \xi_N \in \R$ such that 
\begin{equation}\label{20201216_12:13}
\sum_{j =1}^N \widehat{g_j} (\alpha - \xi_j) \leq \chi_{[0,\ell]}(\alpha)
\end{equation}
for all $\alpha \in \R$. Over all such possibilities, find the supremum
\begin{equation}\label{20201216_12:14}
\mathcal{W}^{-}(\ell) := \sup \sum_{j=1}^{N} \big(2g_j(0) - \rho(g_j)\big).
\end{equation}

\subsubsection*{Extremal problem 3 {\rm (EP3)}} Let $b, \beta \in \R$ with $b < \beta$. Consider a finite collection of functions $g_1, g_2, \ldots, g_N \in \mathcal{A}$, points $\eta_1, \eta_2, \ldots, \eta_N \in \R$, and values $\frak{r}_1, \frak{r}_2, \ldots, \frak{r}_N \in (-\infty,1]$ with $\frak{r}_j \leq 0$ if $g_j \in \mathcal{A}\setminus \mathcal{A}_0\ (j =1,2, \ldots, N)$, such that 
\begin{equation}\label{20210107_21:13}
\sum_{j =1}^N \widehat{g_j} (\alpha - \eta_j) \leq \chi_{[b,\beta]}(\alpha)
\end{equation}
for all $\alpha \in \R$, and 
\begin{equation}\label{20210108_10:11}
{\rm Re} \left( \sum_{j=1}^N e^{2\pi i \eta_j x} g_j(x)\right) \geq \sum_{j=1}^N \frak{r}_j \,g_j(x)
\end{equation}
for all $x \in \R$. Over all such possibilities, find the supremum
\begin{equation}\label{20210111_09:03}
\mathcal{W}_*^{-}(b, \beta) := \sup \sum_{j=1}^{N} \big(g_j(0)  + \frak{r}_j \big(\rho(g_j) - g_j(0)\big)\big).
\end{equation} 

\noindent{\sc Remark 1:} Note that by a uniform translation of all the $\xi_j$'s one can consider any interval of length $\ell$ in \eqref{20201216_11:47} and \eqref{20201216_12:13} instead of the interval $[0,\ell]$. The situation is slightly different in (EP3) since, for fixed $g_j$'s and $\frak{r}_j$'s, condition \eqref{20210108_10:11} is not necessarily invariant under translations of the $\eta_j$'s, and hence the answer may depend on the particular interval $[b, \beta]$ that we choose in \eqref{20210107_21:13}. Throughout this section, we reserve the variable $\ell$ for the length of the interval, hence the change of variables $\beta = b + \ell$ is sometimes used. In \eqref{20210108_10:11} note that 
the choice $\frak{r}_1 = \frak{r}_2 = \ldots = \frak{r}_N = -1$ is always admissible.

\smallskip

\noindent {\sc Remark 2:} In the next subsections, we see that collections of functions and points that satisfy \eqref{20201216_11:47}, \eqref{20201216_12:13}, or \eqref{20210107_21:13}--\eqref{20210108_10:11}  indeed exist. We do not take the supremum and infimum over empty sets.

\smallskip
 
At this point we collect some basic facts about the newly introduced functions $\mathcal{W}^{+}, \mathcal{W}^{-}$ and $\mathcal{W}^{-}_*$. 
\begin{proposition}\label{Prop_basic_prop}
The following statements hold:
\begin{itemize}
\item[(i)] The functions $\ell \mapsto \mathcal{W}^+(\ell)$, $\ell \mapsto \mathcal{W}^-(\ell)$ and $\ell \mapsto \mathcal{W}^-_*(b, b+\ell)$ are non-decreasing for $b \in \mathbb R$ and $\ell > 0.$
\smallskip
\item[(ii)] For each $b \in \R$ and $\ell >0$ we have
\begin{equation}\label{20210108_10:15}
\mathcal{W}^-(\ell) \leq \mathcal{W}^-_*(b , b + \ell).
\end{equation}
\item[(iii)] For each $\ell_1, \ell_2 >0$ we have
\begin{equation}\label{20210108_10:34}
\mathcal{W}^+(\ell_1 + \ell_2) \leq \mathcal{W}^+(\ell_1) + \mathcal{W}^+(\ell_2) \ \ \ {\rm and} \ \ \ \mathcal{W}^-(\ell_1 + \ell_2) \geq \mathcal{W}^-(\ell_1) + \mathcal{W}^-(\ell_2).
\end{equation}
\item[(iv)] For $b < c < d$ we have
\begin{equation}\label{20210108_11:14}
\mathcal{W}^-_*(b, d) \geq \mathcal{W}^-_*(b, c) + \mathcal{W}^-_*(c, d).
\end{equation}
\end{itemize}
\end{proposition}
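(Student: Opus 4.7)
The unifying theme is that all four statements reduce to elementary manipulations of admissible configurations in EP1, EP2, and EP3---namely restriction, translation, and concatenation---together with a brief continuity argument at the shared boundary in the superadditivity claims.

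For part (i), I fix $0 < \ell_1 \leq \ell_2$. For $\mathcal{W}^+$, any configuration admissible for $\ell_2$ satisfies $\sum_j \widehat{g_j}(\alpha-\xi_j) \geq \chi_{[0,\ell_2]}(\alpha) \geq \chi_{[0,\ell_1]}(\alpha)$ and is therefore admissible for $\ell_1$, giving $\mathcal{W}^+(\ell_1) \leq \mathcal{W}^+(\ell_2)$. For $\mathcal{W}^-$ and $\mathcal{W}^-_*$ the direction of the constraint is reversed, so admissibility for the shorter interval automatically implies admissibility for the longer one, since $\chi_{[b,b+\ell_1]} \leq \chi_{[b,b+\ell_2]}$ and the phase condition in EP3 does not involve $\ell$.

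Part (ii) follows by starting with a near-optimal configuration $(g_j,\xi_j)$ for EP2 on $[0,\ell]$, setting $\eta_j := \xi_j + b$, and choosing $\frak{r}_j := -1$ for every $j$ (admissible by Remark~1). The translated Fourier side satisfies $\sum_j \widehat{g_j}(\alpha-\eta_j) \leq \chi_{[b,b+\ell]}(\alpha)$ by construction. Because $g_j \geq 0$, one has $\text{Re}(e^{2\pi i \eta_j x})\,g_j(x) \geq -g_j(x)$ termwise, so the phase condition \eqref{20210108_10:11} holds automatically with the chosen $\frak{r}_j$. Finally $g_j(0)+(-1)(\rho(g_j)-g_j(0)) = 2g_j(0)-\rho(g_j)$, so objectives match term by term, and \eqref{20210108_10:15} follows after letting the approximation error tend to zero.

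For parts (iii) and (iv), I fix $\varepsilon > 0$ and take $\varepsilon$-near-optimal configurations for the two subproblems, then concatenate. For $\mathcal{W}^+(\ell_1+\ell_2)$, translating the second configuration by $\ell_1$ produces a summed configuration that majorizes $\chi_{[0,\ell_1]}+\chi_{[\ell_1,\ell_1+\ell_2]}\geq\chi_{[0,\ell_1+\ell_2]}$, and adding objectives (then letting $\varepsilon \to 0$) gives subadditivity. For $\mathcal{W}^-$ (with the same $\ell_1$-shift) and $\mathcal{W}^-_*$ (no shift is needed, since $[b,c]$ and $[c,d]$ already abut), the concatenated configuration is still admissible, and the phase condition in (iv) adds linearly with the concatenated list of $\frak{r}_j$-values. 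The only delicate point in both superadditivity claims is the shared boundary $\ell_1$ (resp.\ $c$), where the naive estimate $\chi+\chi\leq 2$ would exceed the desired bound of $1$. I would close this gap with the continuity of each $\widehat{g_j}$ (which follows from $g_j\in L^1(\mathbb{R})$): admissibility of the first configuration forces its sum to be $\leq 0$ for $\alpha$ slightly greater than the boundary, hence $\leq 0$ at the boundary by continuity; a symmetric argument from the left handles the (translated) second configuration. Thus the combined sum is $\leq 0 \leq 1$ at the boundary, concatenation is admissible, and summing the objectives yields the claimed superadditivity. This boundary-continuity observation is the only subtle step; the rest is bookkeeping.
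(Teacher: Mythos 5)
Your proof is correct and follows essentially the same route as the paper: restriction of the constraint for (i), translation of a near-optimal EP2 configuration with $\eta_j=\xi_j+b$ and $\frak{r}_j=-1$ for (ii), and concatenation of near-optimal configurations for (iii) and (iv). The paper's proof of (iii)--(iv) simply asserts the concatenated configuration remains admissible without addressing the shared endpoint, whereas you flag and explicitly close the one genuine subtlety for $\mathcal{W}^-$ and $\mathcal{W}^-_*$ (where the naive bound $\chi+\chi$ gives $2$ at the junction) by invoking continuity of $\widehat{g_j}$ to force each sub-configuration's sum to be $\le 0$ at the boundary; this is a correct and worthwhile clarification, not a deviation in method.
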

\begin{proof}
(i) This should be clear from the definitions of the extremal problems (EP1), (EP2) and (EP3). 

\smallskip

\noindent (ii) Assume that \eqref{20201216_12:13} is verified. Then, letting $\eta_j = \xi_j + b$, we verify \eqref{20210107_21:13} with $\beta = b + \ell$. We may choose $\frak{r}_1 = \frak{r}_2 = \ldots = \frak{r}_N = -1$ in \eqref{20210108_10:11} to arrive at inequality \eqref{20210108_10:15}.

\smallskip

\noindent (iii) If $\big( \{g_{1,j}\}_{j=1}^{N_1}, \{\xi_{1,j}\}_{j=1}^{N_1}\big)$ verifies \eqref{20201216_11:47} with $\ell = \ell_1$ and $\big( \{g_{2,j}\}_{j=1}^{N_2}, \{\xi_{2,j}\}_{j=1}^{N_2}\big)$ verifies \eqref{20201216_11:47} with $\ell = \ell_2$, then the collection $\big( \{g_{3,j}\}_{j=1}^{N_1 + N_2}, \{\xi_{3,j}\}_{j=1}^{N_1 + N_2}\big)$ verifies \eqref{20201216_11:47} with $\ell = \ell _1 + \ell_2$, where 
\begin{equation*}
g_{3,j} = \left\{ 
\begin{array}{ll}
g_{1,j},&\ {\rm for} \ 1 \leq j \leq N_1;\\
g_{2,j-N_1}&\ {\rm for} \ N_1 +1 \leq j  \leq N_1 + N_2;
\end{array}
\right.
\ \ ; \ \ 
\xi_{3,j} = \left\{ 
\begin{array}{ll}
\xi_{1,j},&\ {\rm for} \ 1 \leq j \leq N_1;\\
\xi_{2,j-N_1} + \ell_1&\ {\rm for} \ N_1 +1 \leq j  \leq N_1 + N_2.
\end{array}
\right.
\end{equation*}
This leads us to \eqref{20210108_10:34} for $\mathcal{W}^+$. A similar concatenation argument yields the inequality for $\mathcal{W}^-$.

\smallskip

\noindent (iv) Assume that the configuration $\big( \{g_{1,j}\}_{j=1}^{N_1}, \{\eta_{1,j}\}_{j=1}^{N_1}, \{\frak{r}_{1,j}\}_{j=1}^{N_1}\big)$ verifies \eqref{20210107_21:13} -- \eqref{20210108_10:11} for the interval $[b,c]$, and that $\big( \{g_{2,j}\}_{j=1}^{N_2}, \{\eta_{2,j}\}_{j=1}^{N_2},  \{\frak{r}_{2,j}\}_{j=1}^{N_2}\big)$ verifies \eqref{20210107_21:13}--\eqref{20210108_10:11} for the interval $[c,d]$. Then the collection $\big( \{g_{3,j}\}_{j=1}^{N_1}, \{\eta_{3,j}\}_{j=1}^{N_1 + N_2},  \{\frak{r}_{3,j}\}_{j=1}^{N_1 + N_2}\big)$ verifies \eqref{20210107_21:13}--\eqref{20210108_10:11} for the interval $[b,d]$, where
\begin{equation*}
g_{3,j} = \left\{ 
\begin{array}{ll}
g_{1,j},&\ {\rm for} \ 1 \leq j \leq N_1;\\
g_{2,j-N_1}&\ {\rm for} \ N_1 +1 \leq j  \leq N_1 + N_2;
\end{array}
\right.
\ \ ; \ \ 
\eta_{3,j} = \left\{ 
\begin{array}{ll}
\eta_{1,j},&\ {\rm for} \ 1 \leq j \leq N_1;\\
\eta_{2,j-N_1} &\ {\rm for} \ N_1 +1 \leq j  \leq N_1 + N_2;
\end{array}
\right.
\end{equation*}
and
\begin{equation*}
\frak{r}_{3,j} = \left\{ 
\begin{array}{ll}
\frak{r}_{1,j},&\ {\rm for} \ 1 \leq j \leq N_1;\\
\frak{r}_{2,j-N_1}&\ {\rm for} \ N_1 +1 \leq j  \leq N_1 + N_2.
\end{array}
\right.
\end{equation*}
This leads us to \eqref{20210108_11:14}.
\end{proof}

\subsubsection{A general bound} We now relate the three extremal problems introduced above to the integral of $F(\alpha)$ in the following general result.
\begin{theorem}\label{Prop_20201217_09:51}
Assume RH, let $b \in \R$ and $\ell >0$. Then, as $T\to\infty$, we have
\begin{align}\label{20210108_11:25}
	\!\!\!\!\!\!\! \mathcal{W}^-(\ell) +o(1) \leq \mathcal{W}^-_*(b, b+\ell)  + o(1)  \le \int_b^{b + \ell} F(\alpha,T) \, \d\alpha \le \mathcal{W}^+(\ell) +o(1).
\end{align}	
\end{theorem}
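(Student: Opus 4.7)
The plan is to address the three inequalities in \eqref{20210108_11:25} separately. The leftmost, $\mathcal{W}^-(\ell)\le \mathcal{W}^-_*(b,b+\ell)$, is already contained in Proposition \ref{Prop_basic_prop}(ii), so the substance lies in the upper and lower bounds. Both will come from inserting admissible configurations for the extremal problems (EP1) and (EP3) into the Fourier identity \eqref{inversion}, using the asymptotic \eqref{F formula} in the form already recorded at \eqref{20201216_11:09}, and carefully separating the diagonal $\gamma=\gamma'$ contribution from the off-diagonal one.

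For the upper bound, fix any admissible pair $(\{g_j\}_{j=1}^N,\{\xi_j\}_{j=1}^N)$ for (EP1) and translate all nodes by $b$ to get $\sum_j \widehat{g_j}(\alpha-\xi_j-b)\ge \chi_{[b,b+\ell]}(\alpha)$. Since $F(\alpha,T)\ge 0$, this pointwise inequality integrates to
\[
\int_b^{b+\ell} F(\alpha,T)\,\d\alpha \le \sum_{j=1}^N \int_{-\infty}^\infty \widehat{g_j}(\alpha-\xi_j-b)\,F(\alpha,T)\,\d\alpha.
\]
Apply \eqref{inversion} to each $R(x)=e^{2\pi i(\xi_j+b)x}g_j(x)$; the left-hand side of the displayed identity is real, so taking real parts in the resulting pair sum replaces the exponential by $\cos((\xi_j+b)(\gamma-\gamma')\log T)$. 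Since $g_j,w\ge 0$, bounding this cosine by $1$ yields control by the unshifted sum, which equals $\int \widehat{g_j}(\alpha)F(\alpha,T)\,\d\alpha$ and is at most $\rho(g_j)+o(1)$ by \eqref{20201216_11:09}. Summing over $j$ and passing to the infimum over admissible configurations produces $\mathcal{W}^+(\ell)+o(1)$.

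For the lower bound, fix an admissible triple $(\{g_j\},\{\eta_j\},\{\frak{r}_j\})$ for (EP3). Condition \eqref{20210107_21:13} together with $F\ge 0$ gives
\[
\int_b^{b+\ell} F(\alpha,T)\,\d\alpha \ge \sum_{j=1}^N \int_{-\infty}^\infty \widehat{g_j}(\alpha-\eta_j)\,F(\alpha,T)\,\d\alpha.
\]
Apply \eqref{inversion} with $R(x)=\sum_j e^{2\pi i\eta_j x}g_j(x)$, take real parts, and split the resulting pair sum into the $\gamma=\gamma'$ diagonal, which contributes $\sum_j g_j(0)+o(1)$ in view of $N(T)\sim T\log T/(2\pi)$ and $w(0)=1$, and the off-diagonal remainder. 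For the off-diagonal, invoke the pointwise hypothesis \eqref{20210108_10:11} against the non-negative weight $w(\gamma-\gamma')$ to bound it below by $\sum_j \frak{r}_j S_j$, where $S_j$ denotes the analogous off-diagonal sum for a single $g_j$. A diagonal subtraction in \eqref{20201216_11:09} gives $S_j=\rho(g_j)-g_j(0)+o(1)$ when $g_j\in\mathcal{A}_0$ and $S_j\le \rho(g_j)-g_j(0)+o(1)$ otherwise; assembling these pieces and taking the supremum over admissible configurations yields $\mathcal{W}^-_*(b,b+\ell)+o(1)$.

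The only genuinely delicate point is this last off-diagonal bookkeeping: the exact identity for $S_j$ is available only on $\mathcal{A}_0$, and on $\mathcal{A}\setminus\mathcal{A}_0$ one has merely an upper bound. This is precisely why (EP3) is defined with the restriction $\frak{r}_j\le 0$ when $g_j\in\mathcal{A}\setminus\mathcal{A}_0$: multiplying an upper bound on $S_j$ by a non-positive $\frak{r}_j$ reverses the inequality and keeps $\frak{r}_j S_j\ge \frak{r}_j(\rho(g_j)-g_j(0))+o(1)$ intact, so the same clean lower bound persists across both admissible classes.
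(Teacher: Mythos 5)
Your upper bound argument, and the observation that the leftmost inequality is Proposition \ref{Prop_basic_prop}(ii), match the paper's proof.

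The lower bound, however, has a genuine gap concerning multiple zeros. You assert that the $\gamma=\gamma'$ diagonal ``contributes $\sum_j g_j(0)+o(1)$'' and, correspondingly, that $S_j=\rho(g_j)-g_j(0)+o(1)$ when $g_j\in\mathcal{A}_0$. Because the ordinates $\gamma,\gamma'$ in the pair sum are counted with multiplicity, the diagonal in fact contributes
\[
g_j(0)\cdot\frac{2\pi}{T\log T}\sum_{0<\gamma\le T}m_\gamma,
\]
and under RH one only knows the one-sided estimate $\sum_{0<\gamma\le T}m_\gamma\ge N(T)\sim T\log T/(2\pi)$; the asymptotic $\sum m_\gamma\sim N(T)$ (i.e.\ that almost all zeros are simple) is an open problem, not a consequence of RH. Hence your identity for $S_j$ in the $\mathcal{A}_0$ case is really an \emph{upper} bound $S_j\le\rho(g_j)-g_j(0)+o(1)$ with slack governed by $\sum m_\gamma-N(T)$, and when $\frak{r}_j>0$ you cannot deduce the lower bound $\frak{r}_j S_j\ge\frak{r}_j(\rho(g_j)-g_j(0))+o(1)$ that your assembly requires.

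The paper sidesteps this with a regrouping (a trick of Goldston): after inserting \eqref{20210108_10:11} one rewrites the estimate as
\[
\frac{2\pi}{T\log T}\sum_{j=1}^N\Big\{g_j(0)(1-\frak{r}_j)\sum_{0<\gamma\le T}m_\gamma \;+\; \frak{r}_j\!\!\sum_{0<\gamma,\gamma'\le T}\!\! g_j\big((\gamma-\gamma')\tfrac{\log T}{2\pi}\big)w(\gamma-\gamma')\Big\},
\]
so the unknown quantity $\sum m_\gamma$ now carries the non-negative coefficient $g_j(0)(1-\frak{r}_j)$ (using $\frak{r}_j\le 1$), and only the trivial bound $\sum m_\gamma\ge N(T)$ is needed; the remaining \emph{full} pair sum is then handled by \eqref{20201216_11:09} together with the sign of $\frak{r}_j$. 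Your two intermediate evaluations happen to produce the correct final formula because the same factor $\frac{2\pi}{T\log T}\sum m_\gamma$ appears in both the diagonal and in $S_j$ and cancels in the combination, but this cancellation is only visible after regrouping as the paper does --- evaluated separately, as in your write-up, neither claim is justified.
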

\begin{proof} The first inequality on the left-hand side of \eqref{20210108_11:25} was already established in  Proposition \ref{Prop_basic_prop} (ii).

\smallskip

Assume that \eqref{20201216_11:47} holds. Then, using \eqref{20201216_11:47}, \eqref{inversion} and \eqref{20201216_11:09} we have 
\begin{align*}
\int_b^{b + \ell} F(\alpha) \, \d\alpha &\le  \sum_{j=1}^N \int_{\mathbb{R}} F(\alpha) \, \widehat{g_j}(\alpha-\,b - \xi_j) \, \d\alpha
\\
&= \frac{2\pi}{T\log T} \sum_{j=1}^N \ \sum_{0<\gamma,\gamma'\le T} T^{i(b+\xi_j)(\gamma-\gamma')} \, g_j\!\left((\gamma-\gamma')\frac{\log T}{2\pi}\right) \,w(\gamma-\gamma')  
\\
&\le \frac{2\pi}{T\log T} \sum_{j=1}^N \ \sum_{0<\gamma,\gamma'\le T} g_j\!\left((\gamma-\gamma')\frac{\log T}{2\pi}\right) \,w(\gamma-\gamma')  
\\
& \leq \sum_{j=1}^{N} \rho(g_j) + o(1),
\end{align*}
which leads us to the upper bound in \eqref{20210108_11:25}.

\smallskip

Now assume that \eqref{20210107_21:13} and \eqref{20210108_10:11} hold, with $\beta = b + \ell$. For the lower bound, we are inspired by a trick of Goldston \cite[p.~172]{G2}. Letting $m_\gamma$ denote the multiplicity of a zero $\frac{1}{2}+i\gamma$ of $\zeta(s)$, we use \eqref{20210107_21:13}, \eqref{inversion}, \eqref{20210108_10:11}, and \eqref{20201216_11:09} (recall that $\frak{r}_j \leq 0$ if $g_j \in \mathcal{A}\setminus \mathcal{A}_0$) to get
\begin{align}
\int_b^{b + \ell} F(\alpha) \, \d\alpha &\ge  \sum_{j=1}^{N} \int_{\mathbb{R}} F(\alpha) \, \widehat{g_j}(\alpha-\eta_j) \, \d\alpha \nonumber 
\\
&= \frac{2\pi}{T\log T} \sum_{j=1}^N \ \sum_{0<\gamma,\gamma'\le T} T^{\,i\,\eta_j(\gamma-\gamma')}\, g_j\left((\gamma-\gamma')\frac{\log T}{2\pi}\right) \,w(\gamma-\gamma')   \nonumber 
\\
& =  \frac{2\pi}{T\log T}   \sum_{j=1}^{N}\left\{ g_j(0) \sum_{0<\gamma \le T} m_\gamma \, +  \sum_{\substack{0<\gamma,\gamma'\le T \\ \gamma \ne \gamma'}} T^{\,i\,\eta_j(\gamma-\gamma')}\, g_j\left((\gamma-\gamma')\frac{\log T}{2\pi}\right) \,w(\gamma-\gamma')  \right\} \nonumber 
\\
& \geq   \frac{2\pi}{T\log T}   \sum_{j=1}^{N}\left\{ g_j(0) \sum_{0<\gamma \le T} m_\gamma \, + \frak{r}_j \sum_{\substack{0<\gamma,\gamma'\le T \\ \gamma \ne \gamma'}} g_j\left((\gamma-\gamma')\frac{\log T}{2\pi}\right) \,w(\gamma-\gamma')  \right\}    \label{20210111_09:32}\\
& = \frac{2\pi}{T\log T}   \sum_{j=1}^{N}\left\{  g_j(0)\,(1 - \frak{r}_j) \sum_{0<\gamma \le T} m_\gamma \, + \frak{r}_j \sum_{0<\gamma,\gamma'\le T}  g_j\left((\gamma-\gamma')\frac{\log T}{2\pi}\right) \,w(\gamma-\gamma')  \right\} \nonumber  \\
& \ge    \sum_{j=1}^{N} \big(g_j(0)  + \frak{r}_j \big(\rho(g_j) - g_j(0)\big)\big) + o(1).\nonumber 
\end{align}
Here we have used the trivial bound
\begin{equation*}
\sum_{0<\gamma\le T} m_\gamma \ge \sum_{0<\gamma\le T} 1 \sim \frac{T\log T}{2\pi}, \quad \text{as } T \to \infty,
\end{equation*}
to derive the final inequality. This leads us to the lower bound for the integral of $F(\alpha)$ in \eqref{20210108_11:25}.
\end{proof}

\smallskip

\noindent{\sc Remark:} It is an interesting problem to determine when the lower bounds in Theorem \ref{Prop_20201217_09:51} start beating the trivial bound of $0$. For instance, in Theorem \ref{Prop_20201220_16:37} below we show that $\mathcal{W}^-(\ell) >0$ for $\ell > 6 - 2\sqrt{6} = 1.10102\ldots$

\smallskip

In the case $b=1$ we may take advantage of the symmetry around the origin and \eqref{F formula} to provide alternative upper and lower bounds as follows.

\begin{corollary}\label{Cor_20210108}
Assume RH and let $\beta > 1$. Then, as $T\to\infty$, we have
\begin{equation}\label{20210108_13:52}
	\frac{\mathcal{W}^-_*(-\beta, \beta)}{2} - 1  + o(1) \le \int_1^{\beta} F(\alpha,T) \, \d\alpha \le \frac{\mathcal{W}^+( 2\beta)}{2} - 1 +o(1).
\end{equation}	
\end{corollary}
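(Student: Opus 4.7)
The plan is to reduce the integral over $[1,\beta]$ to the integral over $[-\beta,\beta]$ using the evenness of $F(\alpha,T)$, and then apply Theorem \ref{Prop_20201217_09:51} with $b = -\beta$ and $\ell = 2\beta$. The auxiliary piece we need is the value of $\int_{-1}^1 F(\alpha,T)\,\d\alpha$ up to $o(1)$, which we compute directly from the Goldston--Montgomery asymptotic \eqref{F formula}.

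First I would write, using that $F(\alpha,T)$ is even,
\begin{equation*}
\int_{-\beta}^{\beta} F(\alpha,T)\,\d\alpha \;=\; 2\int_0^1 F(\alpha,T)\,\d\alpha + 2\int_1^{\beta} F(\alpha,T)\,\d\alpha,
\end{equation*}
so that
\begin{equation*}
\int_1^{\beta} F(\alpha,T)\,\d\alpha \;=\; \tfrac12 \int_{-\beta}^{\beta} F(\alpha,T)\,\d\alpha \;-\; \int_0^1 F(\alpha,T)\,\d\alpha.
\end{equation*}

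Next I would evaluate the second term on the right asymptotically. By \eqref{F formula}, uniformly on $[0,1]$,
\begin{equation*}
\int_0^1 F(\alpha,T)\,\d\alpha \;=\; \bigl(1+o(1)\bigr)\int_0^1 \bigl(T^{-2\alpha}\log T + \alpha\bigr)\d\alpha \;=\; \bigl(1+o(1)\bigr)\!\left(\tfrac{1-T^{-2}}{2} + \tfrac12\right),
\end{equation*}
which tends to $1$ as $T\to\infty$. Substituting back gives
\begin{equation*}
\int_1^{\beta} F(\alpha,T)\,\d\alpha \;=\; \tfrac12 \int_{-\beta}^{\beta} F(\alpha,T)\,\d\alpha \;-\; 1 \;+\; o(1).
\end{equation*}

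Finally I would invoke Theorem \ref{Prop_20201217_09:51} with $b=-\beta$ and $\ell = 2\beta$, which provides
\begin{equation*}
\mathcal{W}^-_*(-\beta,\beta) + o(1) \;\le\; \int_{-\beta}^{\beta} F(\alpha,T)\,\d\alpha \;\le\; \mathcal{W}^+(2\beta) + o(1).
\end{equation*}
Dividing by $2$ and subtracting $1$ yields \eqref{20210108_13:52}. There is no genuine obstacle here; the only delicate point is ensuring that the $o(1)$ from the asymptotic on $[-1,1]$ does not interact badly with the $o(1)$ coming from Theorem \ref{Prop_20201217_09:51}, but both error terms are additive and the manipulation is linear, so they combine harmlessly.
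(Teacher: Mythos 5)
Your proposal is correct and follows essentially the same path as the paper: both decompose $\int_{-\beta}^{\beta}F$ by evenness, use the Goldston--Montgomery asymptotic \eqref{F formula} to evaluate the contribution near the origin as $2+o(1)$ (you compute $\int_0^1 F = 1 + o(1)$, the paper states $\int_{-1}^1 F = 2+o(1)$; these are the same fact), and then invoke Theorem \ref{Prop_20201217_09:51} with $b=-\beta$, $\ell=2\beta$.
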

\begin{proof}
The estimate in \eqref{F formula} implies that 
\begin{equation}\label{20201223_13:38}
\int_{-1}^1  F(\alpha) \, \mathrm{d} \alpha  =2+ o(1).
\end{equation}
Using \eqref{20201223_13:38} and the fact that $F(\alpha)$ is even we have
\begin{align}\label{20201223_13:39}
\int_{-\beta}^{\beta}  F(\alpha) \, \mathrm{d} \alpha  = 2 \int_1^{\beta} F(\alpha) \, \d\alpha + 2 + o(1).
\end{align}
The desired bounds in \eqref{20210108_13:52} now follow from \eqref{20201223_13:39} and Theorem \ref{Prop_20201217_09:51}.
\end{proof}
%\noindent{\sc Remark:} The fact that the bounds in \eqref{20210108_13:52} are in fact stronger than the bounds in \eqref{20210108_11:25} is a consequence of Proposition \ref{Prop_basic_prop} (iii) and (iv), together with the information that $\mathcal{W}^-_*(-1, 1) \leq 2 \leq \mathcal{W}^+( 2)$. The latter follows from \eqref{20201223_13:38} and Theorem \ref{Prop_20201217_09:51}.

\subsubsection{Strengths and limitations} Finding the exact answer in the general case of extremal problems (EP1), (EP2) and (EP3) above is, in principle, something non-trivial. There are too many parameters in play. On the other hand, an advantage of this method and Theorem \ref{Prop_20201217_09:51} is that, for a fixed interval $[b,b + \ell]$, it is possible to bring in sophisticated computational tools to approximate the solutions of these extremal problems.

\smallskip

As noted in Proposition \ref{Prop_basic_prop} (ii) and Theorem \ref{Prop_20201217_09:51}, the extremal problem (EP2) provides a weaker lower bound than (EP3), but has the advantage of being a simpler problem. In fact, if one wants to obtain effective estimates for all intervals in a more systematic way, it is simpler to narrow down the search to certain families of functions within the subclass $\mathcal{A}_0$ and work with (EP1) and (EP2) to start. We proceed along these lines in the next subsection. We note that the larger class $\mathcal{A}$ has proved useful to sharpen some bounds in the theory of the Riemann zeta-function via sophisticated numerical experimentation \cite{CGL} and, though numerics is not our main focus here, we have already laid the foundational theoretical framework for such endeavors.

\smallskip

Montgomery and Taylor \cite{M2} showed that for each function $0\neq g \in \mathcal{A}_0$ one has
\begin{equation}\label{20201217_11:08}
\frac{\rho(g)}{g(0)}\,  \geq \, {\bf C_{MT}}:= \frac{1}{2} + 2^{-\frac12} \cot\left(2^{-\frac12}\right) = 1.32749\ldots,
\end{equation}
with equality if and only if
$$g(x) = \frac{c}{(1- 2\pi^2 x^2)^2}\left( \cos (\pi x) - 2^{\frac12}\pi x \cot \big(2^{-\frac12}\big) \sin(\pi x) \right)^2 \ \ \ \ (c >0).$$
For an alternative proof using reproducing kernel Hilbert spaces, see \cite[Corollary 14]{CCLM}. See also \cite[Appendix A]{ILS}. Assuming that \eqref{20201216_11:47} holds, we integrate to get
\begin{align}\label{20201217_11:09}
\sum_{j =1}^N g_j(0) = \int_{\R} \left(\sum_{j =1}^N \widehat{g_j} (\alpha - \xi_j)\right) \d \alpha \geq \int_{\R} \chi_{[0,\ell]}(\alpha) \, \d \alpha = \ell.
\end{align}
If all functions $g_j$ are in the subclass $\mathcal{A}_0$, from \eqref{20201217_11:08} and \eqref{20201217_11:09}, we see that
\begin{equation}\label{20210111_11:56}
\sum_{j =1}^N \rho(g_j) \geq {\bf C_{MT}}\sum_{j =1}^N g_j(0) \geq {\bf C_{MT}} \, \ell = (1.32749\ldots)\,\ell.
\end{equation}
Analogously, if in extremal problem (EP2) we restrict our attention to functions $g_j$ in the subclass $\mathcal{A}_0$, by integrating \eqref{20201216_12:13} and using  \eqref{20201217_11:08}, we get
\begin{equation}\label{20210108_17:12}
\sum_{j=1}^{N} \big(2g_j(0) - \rho(g_j)\big) \leq \big( 2 - {\bf C_{MT}} \big)\, \ell = (0.67250\ldots)\,\ell.
\end{equation}
These are universal limitations of this method when using the extremal problems (EP1) and (EP2) restricted to the subclass $\mathcal{A}_0$. For the lower bound, in the regime when $\ell$ is large, we see in \S \ref{SS_Triangles} that we can in fact get very close to the threshold \eqref{20210108_17:12} but, at the end, with the refined framework of \S \ref{SS_Dir} we  see that the extremal problem (EP3) yields a substantially better lower bound. For the upper bound, we show in \S \ref{SS_Triangles} and \S \ref{Sub_Pf_Thm1} that we can get very close to the threshold \eqref{20210111_11:56}.

\subsection{Stacking triangles} \label{SS_Triangles} A simple and effective way to use Theorem \ref{Prop_20201217_09:51}, with the lower bound given by (EP2), is by considering the functions $\widehat{g_j}$ being triangles. The linearity allows for a reasonable control over restrictions \eqref{20201216_11:47} and \eqref{20201216_12:13}. In fact, the key observation here is that the superposition (addition) of equally spaced triangular graphs morally results in a constant function. This idea is already hinted in the work of Goldston and Gonek \cite[Lemma]{GG}, and we further explore it here. For $0 < \Delta \leq 1$, consider the Fourier pair
\begin{equation}\label{20210125_09:36}
K_{\Delta}(x) = \Delta \left( \frac{\sin \pi \Delta x}{\pi \Delta x} \right)^{\!2}  %\left\{ \begin{array}{cl} \left( \frac{\sin \pi x}{\pi x} \right)^2, \quad & \ \textrm{if} \ x \neq 0, \\ 1, \ \ \ \ \  & \ \textrm{if} \ x = 0, \end{array} \right. \ \ \  \ \ \  
\quad \text{and} \quad \widehat{K_{\Delta}}(\xi) = \left(1-\frac{|\xi|}{\Delta}\right)_{\!+}.
\end{equation}
Note that the graph of $\widehat{K_{\Delta}}$ is a triangle with base $2 \Delta$ (centered at the origin) and height $1$. In this case, \eqref{20201217_11:41} yields
\begin{equation}\label{20210111_09:07}
\rho(K_{\Delta}) =  1 + \frac{\Delta^2}{3}.
\end{equation}
We establish the following effective bounds. 
\begin{theorem}[Triangle bounds]\label{Prop_20201220_16:37}
Assume RH, let $b \geq1$, and let $\ell >0$. Then, as $T\to\infty$, we have
\begin{equation*}
	\mathcal{C}^-_{\blacktriangle}(\ell) + o(1) \le \int_b^{b + \ell} F(\alpha,T) \, \d\alpha \le \mathcal{C}^+_{\blacktriangle}(\ell) +o(1),
\end{equation*}	
where
\begin{equation}\label{20210108_17:51}
\mathcal{C}^+_{\blacktriangle}(\ell)  = \left\{
\begin{array}{ll}
\frac{4}{3}(\ell +1) + \frac{\{\ell\}^3}{12}  - \frac{\{\ell\}}{3}  - \frac{1}{4}\left( 1 - \{\ell\} - \{\ell\}^2\right)_{\!+} \,,\ \ {\rm for} \ \ \ell \geq 1;\\
\min\left\{\frac{4}{3}(\ell +1) + \frac{\ell^3}{12}  - \frac{\ell}{3} - \frac{1}{4}\left( 1 - \ell - \ell^2\right)_{\!+}  \, \,;\,\, (1+c) \left(1 + \frac{\ell^2(1+c)^2}{12c^2}\right)\right\} , \\
\hskip 2in   {\rm for} \ \ 0 < \ell \leq 1 ,\, {\rm with} \ c =\max\left\{ 6^{-1/3}\, \ell^{2/3}\,,\, \frac{\ell}{2 - \ell}\right\};
\end{array}
\right.
\end{equation}
and
\begin{equation} \label{20201216_12:03}
\mathcal{C}^-_{\blacktriangle}(\ell)  = \left\{
\begin{array}{ll}
\frac{2}{3}(\ell -1) - \frac{2\{\ell\}}{3} + \left(\frac{1 + \{\ell\}}{2}\right)\left( \{\ell\}  - \frac{(1 + \{\ell\})^2}{12}\right)_{\!+} \,,\ \ {\rm for} \ \ \ell \geq 2;\\
\left( \ell - 1 -\frac{\ell^2}{12}\right)_{\!+}\ ,\,  {\rm for} \ \ 0 < \ell \leq 2.
\end{array}
\right.
\end{equation}
\end{theorem}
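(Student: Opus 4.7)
The plan is to apply Theorem \ref{Prop_20201217_09:51} with explicit test configurations built from the Fej\'er kernel family $K_\Delta$ of \eqref{20210125_09:36}, noting that $\widehat{K_\Delta}(\xi) = (1 - |\xi|/\Delta)_+$ is a centered isoceles triangle, $K_\Delta \in \mathcal{A}_0$ for $0 < \Delta \leq 1$, and $\rho(K_\Delta) = 1 + \Delta^2/3$, $K_\Delta(0) = \Delta$ (see \eqref{20210111_09:07}). Any positive scalar multiple $h K_\Delta$ also lies in $\mathcal{A}_0$, with $\rho(h K_\Delta) = h(1 + \Delta^2/3)$ and $(h K_\Delta)(0) = h\Delta$. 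These elementary identities drive every computation; the task reduces to proposing configurations and verifying the triangular constraints \eqref{20201216_11:47} and \eqref{20201216_12:13} pointwise.

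\medskip

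For the lower bound when $\ell \geq 2$, the main idea is to exploit the partition-of-unity property: the $\lfloor\ell\rfloor - 1$ translates of $\widehat{K_1}$ centered at the integers $1, 2, \ldots, \lfloor\ell\rfloor - 1$ have total support in $[0, \lfloor\ell\rfloor] \subset [0, \ell]$ and sum to $1$ on $[1, \lfloor\ell\rfloor - 1]$, with linear tails elsewhere, so \eqref{20201216_12:13} holds and the contribution is $\frac{2}{3}(\lfloor\ell\rfloor - 1) = \frac{2}{3}(\ell - 1) - \frac{2}{3}\{\ell\}$. The refinement is to append a single ``slim'' triangle $h K_{\Delta'}$ supported precisely on $[\lfloor\ell\rfloor - 1, \ell]$, forcing $\Delta' = (1+\{\ell\})/2$ and center $\eta = (\lfloor\ell\rfloor - 1 + \ell)/2$; the critical choice $h = (1+\{\ell\})/2$ is imposed by the requirement that the rising slope $h/\Delta'$ of this piece exactly cancels the falling slope $-1$ of the adjacent unit triangle on the overlap region $[\lfloor\ell\rfloor - 1, \eta]$, pinning the combined sum at $1$ there. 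Its contribution $\frac{1+\{\ell\}}{2}\bigl(\{\ell\} - (1+\{\ell\})^2/12\bigr)$ is added only when positive. For $0 < \ell \leq 2$, a single triangle $K_{\ell/2}$ of base $\ell$ and peak $1$ centered at $\ell/2$ is supported on $[0,\ell]$, satisfies \eqref{20201216_12:13}, and contributes $(\ell - 1 - \ell^2/12)_+$, giving \eqref{20201216_12:03}.

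\medskip

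For the upper bound when $\ell \geq 1$, the starting construction places $\lfloor\ell\rfloor + 1$ unit triangles at the integers $0, 1, \ldots, \lfloor\ell\rfloor$, which sum to $1$ on $[0, \lfloor\ell\rfloor]$ by partition of unity and drop to $0$ linearly on $[\lfloor\ell\rfloor, \lfloor\ell\rfloor+1]$, leaving a shortfall $\alpha - \lfloor\ell\rfloor$ on $[\lfloor\ell\rfloor, \ell]$ relative to $\chi_{[0,\ell]}$. Appending a scaled closing piece $h K_{\Delta'}$ at the right end and optimizing $(h, \Delta')$ under the majorization requirement \eqref{20201216_11:47} yields the terms $\frac{4}{3}(\ell+1) + \{\ell\}^3/12 - \{\ell\}/3$. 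The additional saving $-\frac{1}{4}(1 - \{\ell\} - \{\ell\}^2)_+$ arises from a further boundary adjustment in the regime where the expression is positive (that is, $\{\ell\} < (\sqrt{5}-1)/2$), by replacing one of the end unit triangles by a taller, narrower variant whose slope can be matched against the adjacent partition-of-unity contribution to reduce cost without losing majorization. For $0 < \ell \leq 1$ one must also compare with the competing single-triangle construction $(1+c) K_\Delta$ with $\Delta = \ell(1+c)/(2c)$ centered at $\ell/2$, which majorizes $\chi_{[0,\ell]}$ with cost $(1+c)\bigl(1 + \ell^2(1+c)^2/(12 c^2)\bigr)$; optimizing over $c > 0$ subject to the admissibility constraint $\Delta \leq 1$ (equivalently $c \geq \ell/(2-\ell)$) yields $c = \max(6^{-1/3}\ell^{2/3}, \ell/(2-\ell))$, and the formula \eqref{20210108_17:51} in the range $0 < \ell \leq 1$ takes the pointwise minimum of the two constructions.

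\medskip

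The main obstacle in each case is the pointwise verification that the assembled superposition satisfies the correct triangular constraint; the sharpest instance is the slope-matching at the right endpoint for $\ell \geq 2$, which uniquely determines the peak and width of the slim triangle from a one-parameter family of candidates. With these constraints verified, evaluating $\sum \rho(g_j)$ (upper bound) and $\sum (2g_j(0) - \rho(g_j))$ (lower bound) via the basic formulae $\rho(h K_\Delta) = h(1 + \Delta^2/3)$ and $(h K_\Delta)(0) = h\Delta$ is routine bookkeeping, producing exactly the expressions \eqref{20210108_17:51} and \eqref{20201216_12:03}.
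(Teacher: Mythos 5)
Your lower bound argument is essentially the paper's, and your observation that $h = \Delta'$ is pinned by slope-matching on the overlap is a correct (and nicely intuitive) way to see why the paper's choice $\widehat{g_{n+1}} = \Delta\widehat{K_\Delta}$ with $\Delta = (1+\{\ell\})/2$ is the right one. The $0 < \ell \le 2$ piece and the threshold $\Delta \ge 3-\sqrt 6$ (equivalently $\{\ell\} \ge 5 - 2\sqrt 6$) also check out.

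The upper bound for $\ell \geq 1$ has a genuine gap. With unit triangles $\widehat{K_1}$ centered at $0,1,\dots,\lfloor\ell\rfloor$ plus a single closing piece $h\widehat{K_{\Delta'}}$, the shortfall $(\alpha - \lfloor\ell\rfloor)_+$ on $[\lfloor\ell\rfloor,\ell]$ forces $h \geq \{\ell\}$ and $\Delta' \geq \{\ell\}$ (or the equivalent constraints for other centers, which are no better), so the cheapest closing piece costs $\{\ell\}(1+\{\ell\}^2/3)$. The total is then
\begin{equation*}
\frac{4(\lfloor\ell\rfloor+1)}{3} + \{\ell\}\left(1 + \frac{\{\ell\}^2}{3}\right) = \frac{4}{3}(\ell+1) - \frac{\{\ell\}}{3} + \frac{\{\ell\}^3}{3},
\end{equation*}
which exceeds the stated $\mathcal{C}^+_{\blacktriangle}(\ell)$ by $\{\ell\}^3/4$. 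The paper instead uses \emph{two} small triangles $\Delta\widehat{K_\Delta}$ of half-base $\Delta = \{\ell\}/2$, one at each endpoint $\xi_1 = 0$ and $\xi_{n+2} = \ell$, with the $n = \lfloor\ell\rfloor+1$ unit triangles \emph{shifted} to $\xi_j = (j-2)+\Delta$; since $\rho$ is superlinear in the half-base, splitting the adjustment between the two ends cuts the end-cap cost to $\{\ell\}(1 + \{\ell\}^2/12)$. Your asymmetric configuration cannot reach the claimed value because the left-most unit triangle anchored at $0$ leaves nothing to share the fractional correction with.

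Relatedly, the explanation you give for the saving $-\tfrac14(1-\{\ell\}-\{\ell\}^2)_+$ is not what happens. It does not arise from replacing an end unit triangle with a taller, \emph{narrower} variant; it is the switch to the paper's alternative choice $(n,\Delta) = (\lfloor\ell\rfloor,\, (1+\{\ell\})/2)$, i.e.~one \emph{fewer} unit triangle and two \emph{wider} end-caps. Subtracting the two resulting expressions gives exactly $\tfrac14(1-\{\ell\}-\{\ell\}^2)$, so the minimum of the two configurations produces the formula, with the crossover at $\{\ell\} = (\sqrt5 - 1)/2$ as you noted. You would need to redo the $\ell \geq 1$ upper bound with the symmetric two-end-cap construction (and the shifted interior triangles) to actually reach $\mathcal{C}^+_{\blacktriangle}(\ell)$.
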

Before moving on to the proof of Theorem \ref{Prop_20201220_16:37}, let us make a few comments. The main point of this theorem is to bring in some relatively simple bounds, that can be explicitly stated for all $\ell$. Nevertheless, we pay attention to some important details that could be useful in other contexts. For instance, note that the functions $\ell \mapsto \mathcal{C}^{\pm}_{\blacktriangle}(\ell)$ are continuous and non-decreasing. Note also that our bound $\mathcal{C}^{+}_{\blacktriangle}(\ell)$ (which comes from a particular choice of functions in (EP1)) establishes that  
\begin{equation}\label{20210108_15:02}
\lim_{\ell \to 0^+} \mathcal{W}^+(\ell) = \lim_{\ell \to 0^+} \mathcal{C}^+_{\blacktriangle}(\ell) = 1.
\end{equation}
In fact, from \eqref{F formula} we get  $\int_{-\varepsilon}^{\varepsilon} F(\alpha)\, \mathrm{d} \alpha  \geq1+ o(1)$ for any fixed $\varepsilon >0$. Then, from Theorem \ref{Prop_20201217_09:51} we get
$$1 \leq  \mathcal{W}^+(\ell) \leq \mathcal{C}^{+}_{\blacktriangle}(\ell)$$
for all $\ell >0$, and we may pass the limit as $\ell \to 0^+$ to obtain \eqref{20210108_15:02}. Recall that we cannot rule out the existence of delta spikes in $F(\alpha)$ for $|\alpha| \geq 1$. The connection between this phenomenon and the so-called alternative hypothesis to Montgomery's strong pair correlation conjecture is investigated by Baluyot in \cite{Bal}. 

\smallskip

In the regime $0 < \ell \leq 1$, our upper bound $\mathcal{C}^+_{\blacktriangle}(\ell)$ is realized by the first function for $1/6 \leq \ell \leq \theta_1 = 0.3576\ldots$ and $ 0.7222\ldots = \theta_2 \leq \ell \leq 1$, and by the second function for $0 < \ell \leq 1/6$ and $\theta_1 \leq \ell \leq \theta_2$ (and in this range the transition of $c$ occurs at $\theta_3 = 0.5297\ldots$).
We note that the lower bound $\mathcal{C}^{-}_{\blacktriangle}(\ell)$ in \eqref{20201216_12:03} starts to be non-trivial at $\ell = 6 - 2\sqrt{6} = 1.10102\ldots$. Finally, we note that Theorem \ref{Prop_20201220_16:37} recovers a result of Goldston and Gonek \cite[Lemma, Eqs.(3), ~(4) and (5)]{GG} in the cases $0 \leq \ell \leq 2$ (lower bound) and $\ell =1$ (upper bound), and refines it in all the other cases. Figure \ref{figure_triangle_bounds} brings the plot of our triangle bounds for small values of $\ell$.

\begin{proof}[Proof of Theorem \ref{Prop_20201220_16:37}] The idea here is simply to establish that 
\begin{equation}\label{20210110_11:54}
\mathcal{C}^{-}_{\blacktriangle}(\ell) \leq \mathcal{W}^- (\ell) \leq \mathcal{W}^+(\ell) \leq \mathcal{C}^{+}_{\blacktriangle}(\ell)\,,
\end{equation}
and the result will follow from Theorem \ref{Prop_20201217_09:51}. Let us split the proof into its different regimes.

\medskip

\noindent {\it Step 1. Upper bound}. The strategy here is to consider $n$ big triangles and $2$ small triangles, one at each end, to adjust for the fractional part of $\ell$. Specifically, in the setup of extremal problem (EP1), we consider a configuration with $N = n+2$ functions given by $\widehat{g_2} = \widehat{g_3} = \ldots = \widehat{g_{n+1}} =  \widehat{K_{1}}$ (the triangle of height $1$ and base $2$; if $n=0$ this block is disregarded) and $\widehat{g_1} = \widehat{g_{n+2}} = \Delta \widehat{K_{\Delta}}$ (the triangle of height $\Delta$ and base $2\Delta$), where $0 < \Delta \leq 1$. Assume further that 
\begin{equation}\label{20201221_14:05}
(n-1) + 2 \Delta = \ell
\end{equation}
and observe that condition \eqref{20201216_11:47} is verified for the translates given by $\xi_1 = 0$; $\xi_j = (j-2) + \Delta$, for $j = 2,3,\ldots, n+1$; and $\xi_{n+2} =  (n-1) + 2\Delta $. For this particular configuration, we have 
\begin{equation}\label{20201221_14:08}
\sum_{j=1}^{n+2} \rho(g_j) = \frac{4n}{3} + 2\Delta \left(1 + \frac{\Delta^2}{3}\right).
\end{equation}
When $\ell \in \N$, since $0 < \Delta \leq 1$, identity \eqref{20201221_14:05} can only be verified if $(n, \Delta) = (\ell, \tfrac12)$ or $(\ell -1, 1)$. Among these two possibilities, the former optimizes \eqref{20201221_14:08}, yielding the upper bound $\tfrac43 \ell + \tfrac{13}{12}$. When $\ell \notin \N$, from \eqref{20201221_14:05} we may have $(n, \Delta) = \big(\lfloor \ell \rfloor +1, \{\ell\}/2\big)$ or $\big(\lfloor \ell \rfloor, (1+\{\ell\})/2\big)$. The minimum of these two in \eqref{20201221_14:08} yields the quantity:
$$ \frac{4}{3}(\ell +1) + \frac{\{\ell\}^3}{12} - \frac{\{\ell\}}{3} - \frac{1}{4}\left( 1 - \{\ell\} - \{\ell\}^2\right)_+$$
Note that the transition between the two possibilities occurs when $\{\ell\} = \frac{\sqrt{5} - 1}{2}$.

\begin{figure} 
\includegraphics[scale=0.6]{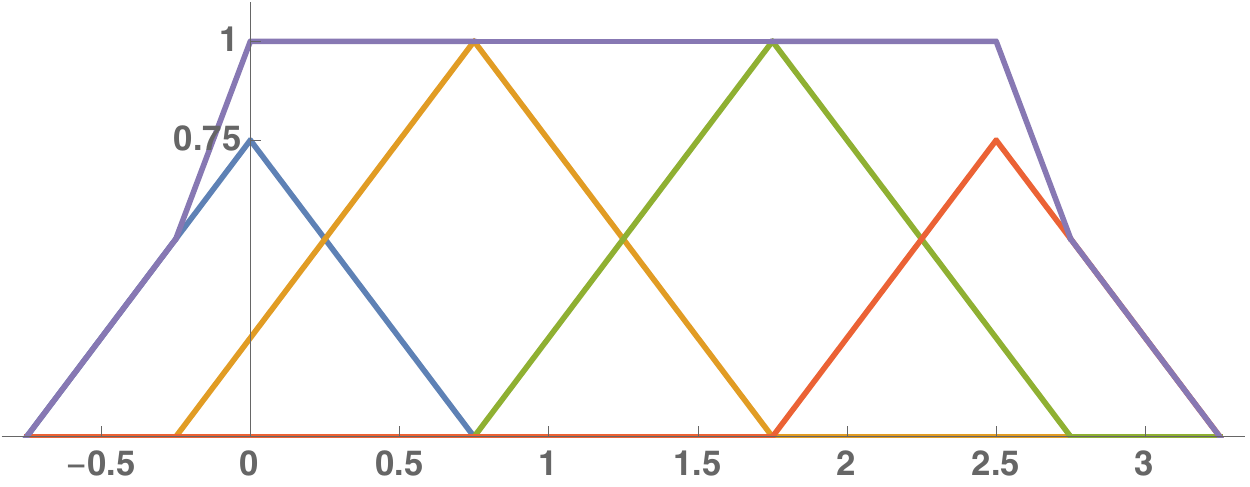}  \qquad 
\includegraphics[scale=0.6]{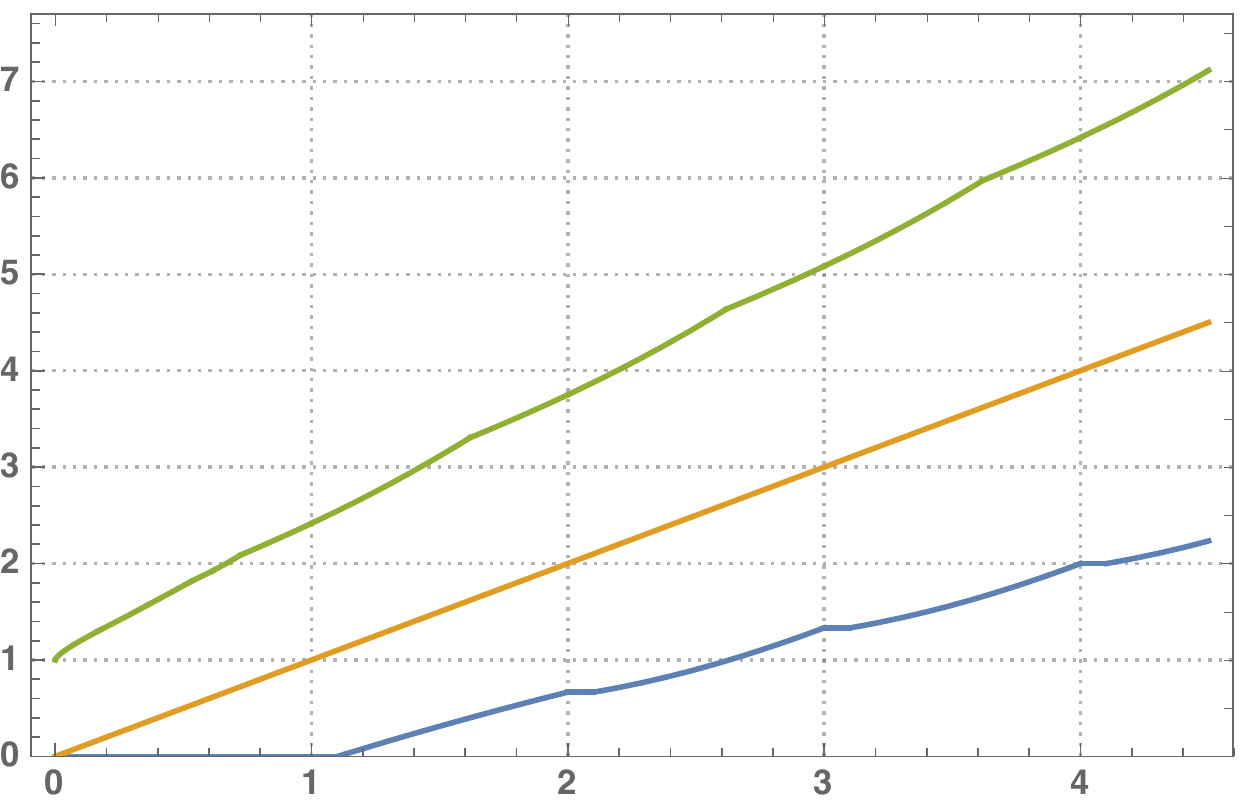}
\caption{On the left, the birth of the idea. This is the construction of the upper bound $\mathcal{C}^+_{\blacktriangle}(\ell)$ when $\ell = 2.5$, with $n=2$ and $\Delta = 3/4$, where the triangular graphs add up to the function on the top (in purple), that majorizes the characteristic function of the interval $[0,2.5]$. On the right, the plots of $\ell \mapsto \mathcal{C}^+_{\blacktriangle}(\ell)$ (in green), $\ell \mapsto \mathcal{C}^-_{\blacktriangle}(\ell)$ (in blue) and the conjectured asymptotic $\ell$ (in orange), for $0 \leq \ell 
	\leq 4.5$. }
	\label{figure_triangle_bounds}
\end{figure}

\medskip

\noindent {\it Step 2. Alternative upper bound when $0 < \ell < 1$}. %In this regime note that the previous bound \eqref{20201221_14:08} with $(n, \Delta) = (1, \ell/2)$ still applies, yielding $\frac{4}{3}(\ell +1) + \frac{\ell^3}{12}  - \frac{\ell}{3}$. 
When $\ell$ is small, it is slightly better if we consider just one triangle. Let $\widehat{g_1} = (1+c) \, \widehat{K_{\Delta}}$ (the triangle of height $1+c$ and base $2 \Delta$). For $\frac{\ell}{2} < \Delta \leq 1$  and $c \geq \frac{\ell}{2 - \ell}$ such that 
\begin{equation}\label{20201221_14:59}
\frac{c}{1+c} = \frac{\ell/2}{\Delta}\,,
\end{equation}
this triangle contains a segment of length $\ell$ at height $1$. In other words, under \eqref{20201221_14:59}, we have the validity of \eqref{20201216_11:47} for $\xi_1 = \ell/2$. In this case, we have
\begin{equation}\label{20201221_16:24}
\rho(g_1) = (1+c) \left(1 + \frac{\Delta^2}{3}\right) = (1+c) \left(1 + \frac{\ell^2(1+c)^2}{12c^2}\right)\,,
\end{equation}
and we may minimize it over $c$. From calculus, we see that this amounts to solving a cubic polynomial, 
$$\left(1 + \frac{12}{\ell^2}\right) c^3 - 3c -2 =0.$$
This can be computed explicitly and yields a solution of the form
$$c = 6^{-1/3}\, \ell^{2/3} + o(\ell^{2/3}) \ \ \ ({\rm as} \ \ell \to 0).$$
For simplicity, we take 
$$c =\max\left\{ 6^{-1/3}\, \ell^{2/3}\,,\, \frac{\ell}{2 - \ell}\right\}.$$ 
Plugging this choice of $c$ in  \eqref{20201221_16:24} leads to the remaining upper bound stated in \eqref{20210108_17:51}.

\medskip

\noindent {\it Step 3. Lower bound}. The quantity appearing in \eqref{20201216_12:14} for $K_{\Delta}$ is
\begin{equation}\label{20201223_11:24}
2K_{\Delta}(0) - \rho(K_{\Delta}) = 2\Delta - 1 - \frac{\Delta^2}{3}.
\end{equation}
Hence, it is only profitable to include a triangle $\widehat{K_{\Delta}}$ in our configuration if the quantity in \eqref{20201223_11:24} is non-negative, that is, if $\Delta \geq 3 - \sqrt{6} = 0.5505\ldots$. If $0 < \ell < 2$ we just choose $\widehat{g_1} = \widehat{K_{\ell/2}}$ and $\xi_1 = \ell/2$ in \eqref{20201216_12:13}, provided that $\ell/2 \geq 3 - \sqrt{6}$, otherwise we go with the trivial lower bound $0$.

\smallskip

If $\ell \geq 2$, the idea here is to consider $n$ big triangles and (possibly) one small triangle at the end to adjust for the fractional part of $\ell$. We let $n = \lfloor \ell \rfloor -1$ and $\Delta = (1 + \{\ell\} )/2$. Observe then that $n + 2\Delta = \ell$. In the setup of extremal problem (EP2), we consider a configuration with $N = n$ or $n+1$ functions given by $\widehat{g_1} = \widehat{g_2} = \ldots = \widehat{g_{n}} =  \widehat{K_{1}}$ and $\widehat{g_{n+1}} = \Delta \widehat{K_{\Delta}}$, with $\xi_j = j$ for $j = 1,2,\ldots, n$ and $\xi_{n+1} = n + \Delta$, where the last pair $(\widehat{g_{n+1}}, \xi_{n+1})$ is only included if $\Delta \geq 3 - \sqrt{6}$. Observe that \eqref{20201216_12:13} is verified, and this configuration yields our desired lower bound
\begin{equation*}
\sum_{j = 1}^N \big(2g_j(0) - \rho(g_j)\big) = \frac{2}{3}n + \Delta\left(2\Delta - 1 - \frac{\Delta^2}{3}\right)_{\!+}.
\end{equation*}
\end{proof}

Observe that, when $\ell$ is large, the effective upper bound in Theorem \ref{Prop_20201220_16:37} with the multiplying factor $4/3$ is very close to the conceptual threshold \eqref{20210111_11:56} for the extremal problem (EP1) restricted to $\mathcal{A}_0$, and almost yields what we claim in Corollary \ref{Thm1_20201215}, but not quite there yet. We return to this point in \S \ref{Sub_Pf_Thm1}. As for the lower bound in Theorem \ref{Prop_20201220_16:37}, when $\ell$ is large, the multiplying factor $2/3$ is very close to the threshold \eqref{20210108_17:12} for the extremal problem (EP2) restricted to $\mathcal{A}_0$.

\subsection{Dirichlet kernels} \label{SS_Dir} We now discuss the reach of the extremal problem (EP3) in the setup of Corollary \ref{Cor_20210108}. The case when the lower endpoint $b$ is equal to $1$ is precisely the situation that is most useful when bounding the integral $J(\beta,T)$ in the next section. 

\subsubsection{Minima of Dirichlet kernels}\label{Subsec_20201221_09:26}For $n \in \Z_{\ge 0}$ we consider the {\it Dirichlet kernel} $D_n$ given by 
\begin{equation} \label{9_14_8:00pm}
	D_n(x) = \sum_{k=-n}^n e^{ikx} = 1+ 2 \sum_{k=1}^n \cos(kx) = \frac{\sin\big((n+1/2)x\big)}{\sin(x/2)}.
	\end{equation}
Let us define the minimum
	\begin{equation}\label{20200915_13:33}
	\mathfrak{m}(n) := \min_{\theta \in [0,2\pi]} \frac{\sin\big((2n+1)\theta\big)}{\sin \theta} = \min_{x\in\mathbb{R}} D_n(x)\,,
	\end{equation}
and the universal constant
\begin{equation}\label{20201221_09:30}
c_0 := \min_{x\in\mathbb{R}} \frac{\sin x}{x} = -0.21723\ldots.
\end{equation}
In Appendix A, we briefly verify the bounds
\begin{equation}\label{20210110_11:41}	
	2c_0  - \frac{(2 \pi -1)}{n}  \ \leq \ \frac{\mathfrak{m}(n)}{n} \ \leq \ 2c_0 + \frac{5.4935}{n}
\end{equation}
for $n \geq 1$, which in particular implies that 
\begin{equation*}
	\displaystyle\lim_{n\to\infty}\dfrac{\mathfrak{m}(n)}{n}=2c_0.
\end{equation*}
Hence, the moral is that Dirichlet kernels cannot be too negative when compared to their maximal value (attained at the origin). One of the main insights here is how to properly take advantage of that information in our context.

\subsubsection{A max-min optimization} We establish the following effective upper and lower bounds for the integral of $F(\alpha)$ in the interval $[1,\beta]$. Our lower bound is stated in terms of the minima $\mathfrak{m}(n)$ and, although our main focus is the behavior for large $\beta$, we try also to be careful for small values of $\beta$. In the argument below, we choose the degree of the Dirichlet kernel in order to optimize the effect that the minimum $\mathfrak{m}(n)$ is not too negative.

\begin{theorem}[Symmetric bounds] \label{17_25}
	Assume RH and let $\beta >1$. Let $c_0$ be given by \eqref{20201221_09:30} and $\mathcal{C}^+_{\blacktriangle}$ given by \eqref{20210108_17:51}. Then, as $T\to\infty$, we have
\begin{equation}\label{20210110_11:53}
	\mathcal{C}^-(1, \beta)  + o(1) \le \int_1^{\beta} F(\alpha,T) \, \d\alpha \le \mathcal{C}^+(1, \beta) +o(1),
\end{equation}	
where
\begin{equation}\label{20201116_16:33}
\mathcal{C}^+(1, \beta) = \frac{\mathcal{C}^+_{\blacktriangle}(2\beta)}{2}   - 1
\end{equation}
and
\begin{align}\label{20201116_16:56}
\mathcal{C}^-(1, \beta) = \max_{n \in \N} G_n(\beta) \ \geq \ \left(1 + \frac{c_0}{3}\right) (\lfloor\beta\rfloor-1) -\frac{(\pi+1)}{3}.
\end{align}
Here the functions $\{G_n\}_{n \in \N}$ are given by
\begin{align}\label{20201120_16:51}
\ \ G_1(\beta) = \min\left\{  \left(\beta + \tfrac{2}{3 \beta} -2\right)_+\, , \, \tfrac{1}{3}\right\};
\end{align}
\begin{align}\label{20201120_16:40}
G_n(\beta) = \left(n-\frac{1}{2}\right)\min\left\{1, \frac{\beta}{n}\right\} +  \mathfrak{m}(n-1) \left(\frac{\min\big\{1, \frac{\beta}{n}\big\}^2}{6} - \frac{\min\big\{1, \frac{\beta}{n}\big\}}{2}+\frac{1}{2}\right) -1 \ \ \ (n \geq 2).
\end{align}
\end{theorem}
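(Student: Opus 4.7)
The plan is to treat the two inequalities in \eqref{20210110_11:53} separately. For the upper bound, I would invoke Corollary \ref{Cor_20210108}, which supplies $\int_1^\beta F(\alpha,T)\,\d\alpha \le \mathcal{W}^+(2\beta)/2 - 1 + o(1)$, and then dominate $\mathcal{W}^+(2\beta) \le \mathcal{C}^+_\blacktriangle(2\beta)$ via the triangle upper bound in Theorem \ref{Prop_20201220_16:37}. This reproduces the formula $\mathcal{C}^+(1,\beta) = \mathcal{C}^+_\blacktriangle(2\beta)/2 - 1$ from \eqref{20201116_16:33}, so this direction amounts to bookkeeping.

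For the lower bound, Corollary \ref{Cor_20210108} reduces the task to certifying $\mathcal{W}^-_*(-\beta,\beta) \ge 2(G_n(\beta)+1)$ for each $n \in \mathbb{N}$ by exhibiting a feasible configuration for extremal problem (EP3) on the interval $[-\beta,\beta]$. The central construction for $n \ge 2$ is where Dirichlet kernels enter. I would fix $\Delta = \min\{1, \beta/n\}$, take $N = 2n - 1$ copies of the same function $g_j = K_\Delta \in \mathcal{A}_0$ at the equally spaced nodes $\eta_j = j\Delta$ for $j = -(n-1),\dots,n-1$, and assign the common weight $\mathfrak{r}_j = \mathfrak{m}(n-1)/(2n-1)$. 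The Fourier-side condition \eqref{20210107_21:13} collapses to the classical tiling identity for sinc-squared hats: the sum $\sum_j \widehat{K_\Delta}(\alpha - j\Delta)$ is a trapezoid that equals $1$ on $[-(n-1)\Delta,(n-1)\Delta]$ and tapers linearly to $0$ at $\pm n\Delta$; since $n\Delta \le \beta$, this is pointwise dominated by $\chi_{[-\beta,\beta]}$. The spatial-side condition \eqref{20210108_10:11} collapses, using $K_\Delta \ge 0$ together with the closed form $\sum_{j=-(n-1)}^{n-1} e^{2\pi i j\Delta x} = D_{n-1}(2\pi \Delta x)$, to the pointwise inequality $D_{n-1}(2\pi \Delta x) \ge \mathfrak{m}(n-1)$, which is precisely the defining property of the minimum in \eqref{20200915_13:33}. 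Computing the objective with $K_\Delta(0) = \Delta$ and $\rho(K_\Delta) = 1 + \Delta^2/3$ from \eqref{20210111_09:07}, dividing by $2$ and subtracting $1$ as prescribed by Corollary \ref{Cor_20210108}, reproduces the formula \eqref{20201120_16:40} for $G_n(\beta)$. The boundary case $n = 1$, for which the general template degenerates (forcing $\mathfrak{r}_1 = 1$), is handled by a separate direct construction built from a small number of sinc-squared hats placed at positions depending on $\beta$, realizing \eqref{20201120_16:51}.

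For the effective estimate in \eqref{20201116_16:56}, I would specialize to $n = \lfloor \beta \rfloor$, so that $\beta/n \ge 1$ forces $\Delta = 1$ and $G_n(\beta)$ simplifies to $n - 3/2 + \mathfrak{m}(n-1)/6$. Inserting the Appendix A bound $\mathfrak{m}(n-1) \ge 2 c_0 (n-1) - (2\pi - 1)$ from \eqref{20210110_11:41} and rearranging algebraically produces $(\lfloor \beta \rfloor - 1)(1 + c_0/3) - (\pi + 1)/3$.

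The hard part will be designing the weights $\mathfrak{r}_j$ so that the spatial condition \eqref{20210108_10:11} is saturated. The elegance of the construction is that both the Fourier-side trapezoidal tiling and the spatial-side Dirichlet kernel live at the same scale $\Delta$, so the two constraints of (EP3) are simultaneously compatible, and the controlled negativity of $\mathfrak{m}(n-1)$ — precisely what Appendix A is dedicated to quantifying — is the sharp quantity that this extremal problem is tailored to leverage. Once that calibration is in place, the remaining algebra through $\rho(K_\Delta)$ and Corollary \ref{Cor_20210108} is routine.
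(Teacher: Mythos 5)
Your proposal reproduces the paper's argument essentially verbatim: the upper bound via Corollary \ref{Cor_20210108} combined with $\mathcal{W}^+(2\beta) \le \mathcal{C}^+_\blacktriangle(2\beta)$ from \eqref{20210110_11:54}, and the lower bound for $n \ge 2$ via the (EP3) configuration of $2n-1$ translates of $K_\Delta$ at nodes spaced by $\Delta = \min\{1,\beta/n\}$ with common weight $\mathfrak{r} = \mathfrak{m}(n-1)/(2n-1)$, followed by the specialization $n = \lfloor\beta\rfloor$ and the Appendix A bound \eqref{20210110_11:41} to obtain \eqref{20201116_16:56}. The only point where your sketch is vaguer than the paper is the boundary function $G_1$: the paper does not use a genuinely separate construction but rather revisits the $n=2$ configuration with $\Delta = \beta/2$, strengthens the Fourier-side majorant by adding the term $\frac{|\alpha|-\Delta}{\Delta}\,\chi_{\{\Delta\le|\alpha|\le 1\}}(\alpha)$, and evaluates its contribution directly from \eqref{F formula}.
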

Before moving to the proof of this result, let us make a few comments. Observe that when $2\beta$ is integer, the constant in \eqref{20201116_16:33} is reduced to 
\begin{equation*} 
\mathcal{C}^+(1, \beta)  = \frac{4}{3}(\beta -1) +\frac{7}{8}.\end{equation*}
We have also already observed that the function $\beta \mapsto \mathcal{C}^+(1, \beta)$ is continuous and non-decreasing. Note that \eqref{20210110_11:41} guarantees that the maximum in \eqref{20201116_16:56} is attained for some $n \leq 13 \,\beta$ (from that value on we actually have $G_n(\beta) \leq 0$). In particular, the function $\beta \mapsto \mathcal{C}^-(1, \beta)$ is, locally, a maximum of a finite number of continuous functions, hence it is also continuous. It is also clear that $\beta \mapsto \mathcal{C}^-(1, \beta)$ is non-decreasing. The particular choice $n = \lfloor \beta \rfloor \geq 3$ in \eqref{20201116_16:56} (which is generally near-optimal) gives us the effective lower bound
\begin{equation*}
\mathcal{C}^-(1, \beta) \geq \lfloor \beta \rfloor -\frac{3}{2} + \frac{\mathfrak{m}(\lfloor\beta\rfloor-1)}{6} \geq \left(1 + \frac{c_0}{3}\right) (\lfloor\beta\rfloor-1) -\frac{(\pi+1)}{3},
\end{equation*}
stated in \eqref{20201116_16:56}. Note the use of \eqref{20210110_11:41} in the last inequality above. Observe that, for large $\beta$, the multiplying factor 
$$1 + \frac{c_0}{3} = 0.92758\ldots$$ 
on the right-hand side of \eqref{20201116_16:56} is only slightly short of the conjectured value of $1$ in (II), and is one of the highlights of this theorem. The first few values of $\mathfrak{m}(n)$ are 
$$\mathfrak{m}(0)=1\ ;\ \mathfrak{m}(1)=-1\ ;\ \mathfrak{m}(2)= - \frac{5}{4} \ ;\  \mathfrak{m}(3)= -\frac{14\sqrt 7 + 7}{27} = -1.63113\ldots \ ; \ \mathfrak{m}(4)=-2.03911\ldots$$ 
%$$\mathfrak{m}(5)=-2.45753\ldots\ ;\ \mathfrak{m}(6)= -2.88117\ldots\ ;\  \mathfrak{m}(7)= -3.30782 \ldots \ ; \ \mathfrak{m}(8)=-3.73637\ldots.$$
Our lower bound $\mathcal{C}^-(1, \beta)$ starts to be non-trivial at $\beta_1 = 1.57735\ldots$ and from that value up to $\beta_2 = 1.77243\ldots$ the maximum in \eqref{20201116_16:56} is attained when $n=1$. From $\beta_2$ up to $\beta_3 = 3.02404\ldots$ the maximum is attained when $n=3$. From $\beta_3$ up to $\beta_4 = 4.04983\ldots$ the maximum is attained when $n=4$ and so on. In particular, we have
$$\mathcal{C}^-(1,2) = \frac{79}{216} = 0.36574\ldots \ ; \ \mathcal{C}^-(1,3) = \frac{31}{24} = 1.29166\ldots \ ; \ \mathcal{C}^-(1,4) = 2.22814\ldots$$
See Figure \ref{figure_C_beta} for the graphs of $\beta \mapsto \mathcal{C}^+(1, \beta)$ and $\beta \mapsto \mathcal{C}^-(1, \beta)$ for small values of $\beta$. 

\begin{figure} 
\includegraphics[scale=0.6]{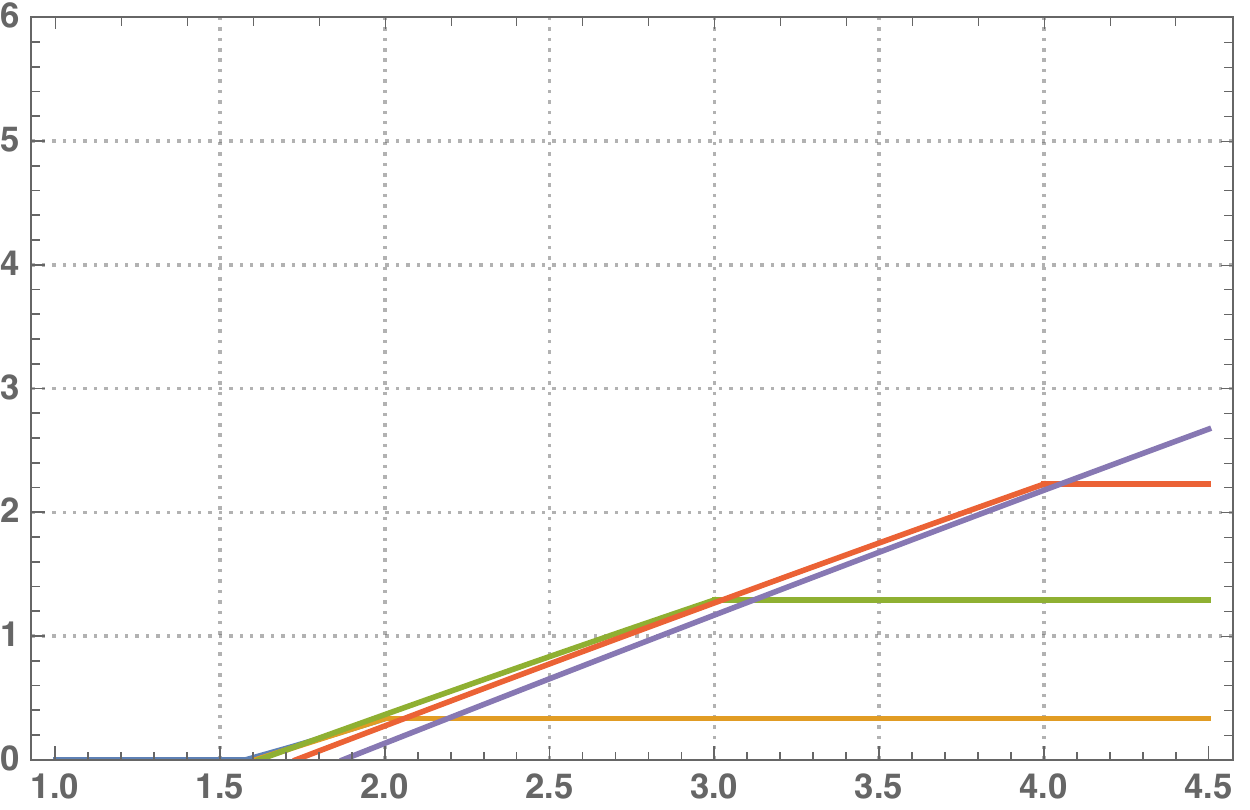}  \qquad 
\includegraphics[scale=0.6]{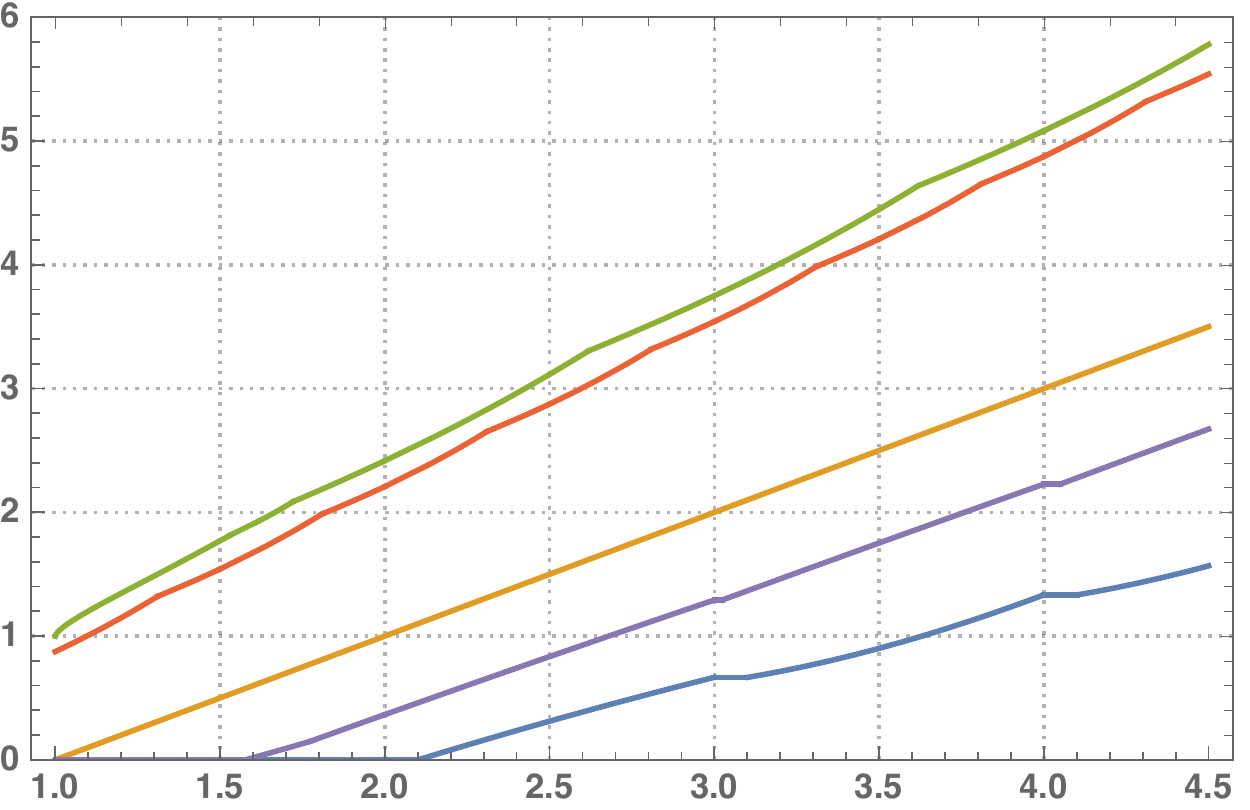}
\caption{On the left, the competition between the lower bounds $G_n$ for $n=1,2,3,4,5$. On the right, the plots of $\beta \mapsto \mathcal{C}^+(1, \beta)$ (in red), $\beta \mapsto \mathcal{C}^-(1, \beta)$ (in purple) and the conjectured asymptotic $\beta \mapsto \beta -1$ (in orange), for small values of $\beta$. In this symmetric setup, these always do better than the triangle bounds $\beta \mapsto \mathcal{C}^+_{\blacktriangle}(\beta -1)$ (in green) and $\beta \mapsto \mathcal{C}^-_{\blacktriangle}(\beta -1)$ (in blue) coming from Theorem \ref{Prop_20201220_16:37}.}
\label{figure_C_beta}
\end{figure}

\smallskip

We remark that, in this symmetric setup, the bounds coming from Theorem \ref{17_25} are better than the triangle bounds coming from Theorem \ref{Prop_20201220_16:37}, that is, for all $\beta >1$, one has
\begin{equation}\label{20210115_10:59}
\mathcal{C}^+(1, \beta) = \frac{\mathcal{C}^+_{\blacktriangle}(2\beta)}{2}   - 1 \leq \mathcal{C}^+_{\blacktriangle}(\beta-1)
\end{equation}
and
\begin{equation}\label{20210115_10:50}
\mathcal{C}^-(1, \beta) \geq \mathcal{C}^-_{\blacktriangle}(\beta-1).
\end{equation}
Inequality \eqref{20210115_10:59} is a routine explicit computation. Inequality \eqref{20210115_10:50} follows from \eqref{20201116_16:56} for large $\beta$ (say, for $\beta \geq 12$) and for small $\beta$ we verify it numerically. Figure \ref{figure_C_beta} also illustrates this dominance.

\smallskip

\noindent {\sc Remark}: In the small range $\frac{3}{2} < \beta < \frac{11}{7}$, we note that Radziwi\l\l \ \cite{R} obtains, with different methods, the lower bound
\[
\int_1^{\beta} \widetilde{F}(\alpha,T) \, \mathrm{d}\alpha \ge \beta - \frac{3}{2} + o(1),
\]
as $T \to \infty$, for the integral of the variant $\widetilde{F}(\alpha,T)$ defined in \eqref{20210128_08:40}.

\begin{proof}[Proof of Theorem \ref{17_25}] The upper bound in \eqref{20210110_11:53} plainly follows from Corollary \ref{Cor_20210108} and \eqref{20210110_11:54}.

\smallskip

For the lower bound, first let $n \in \N$, $n \geq 2$ and let $\Delta = \min\{1, \beta/n\}$. In the setup of extremal problem (EP3), we consider a configuration with $N = 2n-1$ functions given by $\widehat{g_1} = \widehat{g_2} = \ldots = \widehat{g}_{2n-1} =  \widehat{K_{\Delta}}$; $\eta_j = (n - j)\Delta$ for $j = 1,2, \ldots, 2n-1$; and $\frak{r}_1 = \frak{r}_2 = \ldots = \frak{r}_{2n-1} = \frak{r}$ given by
\begin{align}\label{20210112_23:07}
\frak{r} = \inf_{\substack{x \in \R \\ K_{\Delta}(x) \neq 0}} \frac{K_{\Delta}(x)\  {\rm Re} \left( \sum_{j=1}^{2n-1} e^{2\pi i (n-j)\Delta x}\right)}{(2n-1) K_{\Delta}(x)} = \min_{x \in \R} \frac{  D_{n-1}(2\pi \Delta x)}{(2n-1)} = \frac{\frak{m}(n-1)}{2n-1}.
\end{align}
Definition \eqref{20210112_23:07} assures the validity of \eqref{20210108_10:11}. Observe also that \eqref{20210107_21:13} is verified (with $b = -\beta$), that is 
\begin{align}\label{20210111_09:25}
\sum_{j=1}^{2n-1}  \widehat{K_{\Delta}}(\alpha - (n-j)\Delta) = \sum_{k=-(n-1)}^{n-1}  \widehat{K_{\Delta}}(\alpha+k\Delta) \leq  \chi_{[-\beta,\beta]}(\alpha)
\end{align}
for all $\alpha \in \R$. Therefore, recalling \eqref{20210111_09:07}, the outcome appearing in \eqref{20210111_09:03} for this particular configuration is
\begin{align*}
\sum_{j=1}^{2n-1} \big(g_j(0)  + \frak{r} \big(\rho(g_j) - g_j(0)\big)\big) = (2n -1)\Delta  + \frak{m}(n-1)\left(1 + \frac{\Delta^2}{3} - \Delta\right) \leq \mathcal{W}_*^{-}(-\beta, \beta).
\end{align*}
Dividing by $2$ and subtracting $1$ we have
\begin{align}\label{20210111_09:19}
G_n(\beta):=\left( n - \frac12\right) \Delta + \frak{m}(n-1)\left(\frac12 + \frac{\Delta^2}{6} - \frac{\Delta}{2}\right) - 1 \leq \frac{\mathcal{W}_*^{-}(-\beta, \beta)}{2} - 1\,,
\end{align}
and the lower bounds with each of these functions $G_n(\beta)$, for $n \geq 2$, follow from Corollary \ref{Cor_20210108}. We can now optimize the choice of the parameter $n$ here. Note that $n=1$ would have given a negative value for the term on the left-hand side of \eqref{20210111_09:19}, and that is the reason we are not considering it for the moment. Therefore, we define the function $G_1$ differently.

\smallskip

When $1< \beta \leq 2$ and we consider $n =2$ in the configuration above, note that $\Delta = \beta /2$ and we may replace \eqref{20210111_09:25} by the slightly stronger inequality
$$\left(\sum_{k=-1}^{1}  \widehat{K_{\Delta}}(\alpha+k\Delta)\right) +  \frac{\big(|\alpha| - \Delta\big)}{\Delta}\, \chi_{\{\Delta \leq |\alpha|\leq 1\}}(\alpha) \leq \chi_{[-\beta,\beta]}(\alpha).$$
Following the same computation as in \eqref{20210111_09:32} for the integral of $F(\alpha)$ from $-\beta$ to $\beta$, using \eqref{F formula} and the fact that $\frak{r} = \mathfrak{m}(1)/3=-1/3$, we would then obtain
$$\int_1^{\beta} F(\alpha) \, \mathrm{d} \alpha \geq 2\Delta + \frac{1}{3\Delta} -2 + o(1) = \beta + \frac{2}{3\beta} -2 + o(1).$$
This is the function we called $G_1(\beta)$ in \eqref{20201120_16:51} (technically speaking, its non-negative part). This concludes the proof of Theorem \ref{17_25}.
\end{proof}

For completeness, we record here the most refined explicit versions of upper and lower bounds for the integral of $F(\alpha)$ over a generic interval, by combining Theorems \ref{Prop_20201220_16:37} and \ref{17_25}.

\begin{corollary}\label{Cor_20210111_13:32}
Assume RH, let $\beta > b >1$ and set $\ell= \beta - b$. Then, as $T\to\infty$, we have
\begin{equation*}
	\mathcal{C}^-(b,\beta)  + o(1) \le \int_b^{\beta} F(\alpha,T) \, \d\alpha \le \mathcal{C}^+(b,\beta) +o(1),
\end{equation*}	
where
\begin{equation}\label{20201119_10:35}
\mathcal{C}^-(b,\beta)  = \max\left\{\mathcal{C}^-_{\blacktriangle}(\ell)\, , \,   \mathcal{C}^-(1,\beta) - \mathcal{C}^+(1, b)  \right\}
\end{equation}
and
\begin{equation}\label{20201119_10:36}
\mathcal{C}^+(b,\beta)  = \min\left\{\mathcal{C}^+_{\blacktriangle}(\ell)\, , \, \mathcal{C}^+(1,\beta) - \mathcal{C}^-(1, b)\right\}.
\end{equation}
\end{corollary}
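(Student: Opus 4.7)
The plan is to establish both stated bounds by presenting two competing strategies and taking the sharper of the two in each case. The first strategy applies the triangle bounds of Theorem \ref{Prop_20201220_16:37} directly on the interval $[b,\beta]$. By the translation-invariance observation in Remark 1 of \S\ref{Fourier_Opt} (the constraints \eqref{20201216_11:47} and \eqref{20201216_12:13} are unchanged under a uniform translation of the $\xi_j$'s), the extremal quantities $\mathcal{W}^{\pm}(\ell)$ depend only on $\ell$, not on the location of the interval. Since $b\ge 1$, Theorem \ref{Prop_20201220_16:37} then gives
\begin{equation*}
\mathcal{C}^{-}_{\blacktriangle}(\ell) + o(1) \ \le\  \int_b^{\beta} F(\alpha,T)\,\d\alpha \ \le\ \mathcal{C}^{+}_{\blacktriangle}(\ell) + o(1).
\end{equation*}

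The second strategy exploits the additivity
\begin{equation*}
\int_b^{\beta} F(\alpha,T)\,\d\alpha \ =\ \int_1^{\beta} F(\alpha,T)\,\d\alpha \ -\ \int_1^{b} F(\alpha,T)\,\d\alpha,
\end{equation*}
which is valid since $\beta>b>1$. For the upper bound on $\int_b^{\beta}F$, I bound the first integral above by $\mathcal{C}^{+}(1,\beta)+o(1)$ and the second below by $\mathcal{C}^{-}(1,b)+o(1)$, both via Theorem \ref{17_25}; this yields $\mathcal{C}^{+}(1,\beta)-\mathcal{C}^{-}(1,b)+o(1)$. Symmetrically, for the lower bound I use $\mathcal{C}^{-}(1,\beta)$ on the first integral and $\mathcal{C}^{+}(1,b)$ on the second, producing $\mathcal{C}^{-}(1,\beta)-\mathcal{C}^{+}(1,b)+o(1)$.

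Combining the two strategies, the upper bound $\int_b^{\beta} F(\alpha,T)\,\d\alpha \le \mathcal{C}^{+}(b,\beta)+o(1)$ with $\mathcal{C}^{+}(b,\beta)$ given by \eqref{20201119_10:36} follows from taking the smaller of the two upper estimates, and the lower bound $\int_b^{\beta} F(\alpha,T)\,\d\alpha \ge \mathcal{C}^{-}(b,\beta)+o(1)$ with $\mathcal{C}^{-}(b,\beta)$ given by \eqref{20201119_10:35} follows from taking the larger of the two lower estimates.

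No genuine obstacle arises: the only bookkeeping point is that the two $o(1)$ contributions coming from the two integrals in the additivity decomposition are both uniform in $T$ (as $T\to\infty$) for each fixed $b,\beta$, so their difference remains $o(1)$. One should also note that either strategy may be vacuous in certain parameter regimes (e.g., the symmetric strategy may yield a negative lower bound when $\mathcal{C}^{-}(1,\beta)<\mathcal{C}^{+}(1,b)$, in which case the triangle bound dominates), which is precisely why the $\max$ and $\min$ in \eqref{20201119_10:35}--\eqref{20201119_10:36} are needed.
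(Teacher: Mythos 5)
Your proof is correct and matches the paper's argument: the paper likewise combines the triangle bounds of Theorem \ref{Prop_20201220_16:37} applied directly to $[b,\beta]$ with the decomposition $\int_b^{\beta} F = \int_1^{\beta} F - \int_1^{b} F$ and the symmetric bounds of Theorem \ref{17_25}, taking the sharper of the two in each direction. Your added remarks about translation invariance and the possible vacuity of the symmetric lower bound are accurate but not strictly needed, since Theorem \ref{Prop_20201220_16:37} is already stated for arbitrary $b\ge 1$.
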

\begin{proof}
The triangle bounds come directly from Theorem \ref{Prop_20201220_16:37}, while the identity
\begin{align}\label{20210113_09:52}
\int_b^{\beta} F(\alpha) \, \mathrm{d} \alpha =  \int_1^{\beta} F(\alpha) \, \mathrm{d} \alpha - \int_1^{b} F(\alpha) \, \mathrm{d} \alpha
\end{align}
allows us to use the symmetric bounds from Theorem \ref{17_25}.
\end{proof}
Note that the bounds $\mathcal{C}^{\pm}(b,\beta)$ in \eqref{20201119_10:35} and \eqref{20201119_10:36} are continuous functions of two variables. For a fixed $b > 1$, the lower bound in \eqref{20201119_10:35} is going to be $\mathcal{C}^-(1,\beta) - O(1)$ for large $\beta$. As observed in \eqref{20201116_16:56}, this comes with a multiplying factor of
$$1 + \frac{c_0}{3} = 0.92758\ldots$$
which is almost what we claim in Corollary \ref{Thm1_20201215} but, technically speaking, not quite there yet. We return to this point in the next subsection.

\subsection{Proof of Theorem \ref{Thm1_20201215*} and Corollary  \ref{Thm1_20201215}}  \label{Sub_Pf_Thm1}

\subsubsection{Upper bound} \label{PfThm1_UB} A natural idea to deal with the asymptotic upper bound is to morally consider, in the formulation of (EP1), copies a single function $g$.  Let $\mathcal{A}_1 \subset \mathcal{A}$ be the subclass of bandlimited functions in $\mathcal{A}$, i.e. the functions $g \in \mathcal{A}$ such that $\widehat{g}$ has compact support. Note that $\mathcal{A}_0 \subset \mathcal{A}_1 \subset \mathcal{A}$. For each $g \in \mathcal{A}_1$, we define its periodization on the Fourier side
\begin{equation*}
P_{\widehat{g}}(\alpha) := \sum_{n \in \Z}\widehat{g}(\alpha+n).
\end{equation*}
This is a continuous and 1-periodic function, and 
\begin{equation}\label{20210204_10:48}
\int_{0}^1 P_{\widehat{g}}(\alpha)\,\d\alpha = \int_{\R} \widehat{g}(\alpha)\,\d\alpha = g(0) \geq 0.
\end{equation}
For our next extremal problem, it is convenient to restrict matters to the subclass $\mathcal{A}_1$.

\subsubsection*{Extremal problem 4 {\rm (EP4)}} Find the infimum
\begin{equation}\label{20210112_11:26}
{\bf C}^+ := \inf_{0 \neq g \in \mathcal{A}_1} \ \frac{\rho(g)}{\displaystyle \min_{0 \leq \alpha \leq 1} \big| P_{\widehat{g}}(\alpha) \big|}.
\end{equation}

Let us see how this fits into our framework of problem (EP1).  Let $0 \neq g \in \mathcal{A}_1$, and assume that 
\begin{equation}\label{20210204_10:50}
\min_{0 \leq \alpha \leq 1} \big| P_{\widehat{g}}(\alpha) \big| \neq 0.
\end{equation}
Since $P_{\widehat{g}}$ is 1-periodic and continuous, from \eqref{20210204_10:48} and \eqref{20210204_10:50} we must have $P_{\widehat{g}}(\alpha) >0$ for all $0 \leq \alpha \leq 1$. By multiplying $g$ by an appropriate constant (note that the ratio in \eqref{20210112_11:26} is invariant under such operation), we may hence assume that 
\begin{equation}\label{20210112_11:29}
\min_{0 \leq \alpha \leq 1} \big| P_{\widehat{g}}(\alpha) \big|  = \min_{0 \leq \alpha \leq 1}  P_{\widehat{g}}(\alpha) =1.
\end{equation}
Assume that ${\rm supp}(\widehat{g}) \subset [-M, M]$, where $M \in \N$. Given $\ell >0$ large, in the setup of (EP1) let $N = \lceil \ell \rceil + 2M - 1$ and consider the configuration given by $\widehat{g_1} = \widehat{g_2} = \ldots = \widehat{g_N} = \widehat{g}$ and $\xi_j = (j- M)$ for $j = 1,2,\ldots, N$. From the fact that $\widehat{g}$ is continuous and ${\rm supp}(\widehat{g}) \subset [-M,M]$, together with \eqref{20210112_11:29}, we have 
\begin{equation}\label{20210112_11:51}
\sum_{j =1}^N \widehat{g_j} (\alpha - \xi_j) = P_{\widehat{g}}(\alpha)  \geq 1
\end{equation}
for $0 \leq \alpha \leq \lceil \ell \rceil$ (and in particular for $0 \leq \alpha \leq \ell$). Every term in the sum on the left-hand side of \eqref{20210112_11:51} is zero if $\alpha \leq -2M +1 $ or $\alpha \geq \lceil \ell \rceil + 2M -1$, hence the sum itself is zero in this range. If the sum is non-negative in the remaining set $[-2M+1, 0] \cup \big[ \lceil \ell \rceil,  \lceil \ell \rceil +2M-1\big]$ we will have achieved \eqref{20201216_11:47}. There is, however, the possibility that the sum on the left-hand side of \eqref{20210112_11:51} is negative in some parts of the set $[-2M+1, 0] \cup \big[ \lceil \ell \rceil,  \lceil \ell \rceil +2M-1\big]$, but this is not going to be a big issue here, for in this case we can fix the situation in order to achieve \eqref{20201216_11:47} by further including in our configuration a finite number of triangles of the form $c\widehat{K_{1}}$, where the number of triangles and their height $c$ may depend on $g$, but not on $\ell$. We have then showed that 
$$ \mathcal{W}^+(\ell) \leq \ell \, \rho(g) + O(1),$$
where the constant in $O(1)$ may depend on $g$, but not on $\ell$. This implies that, for any fixed $\varepsilon >0$, we have
\begin{equation*}
\mathcal{W}^+(\ell) \leq \ell \, ({\bf C}^+ + \varepsilon)
\end{equation*}
for large $\ell$. Hence, for any fixed $\varepsilon >0$, from Theorem \ref{Prop_20201217_09:51} we have
\begin{equation*}%\label{20210113_09:36}
\int_b^{b + \ell} F(\alpha,T) \, \d\alpha \le \ell \, ({\bf C}^+ + \varepsilon) +o(1)
\end{equation*}
for large $\ell$, as $T \to \infty$. This establishes the upper bound proposed in Theorem \ref{Thm1_20201215*}.

\smallskip

Finding the exact value of the constant ${\bf C}^+$ seems to be a hard problem. At the moment we can provide a reasonable approximation by working within the subclass $\mathcal{A}_0 \subset \mathcal{A}_1$. If $g \in \mathcal{A}_0$, a classical result of Krein \cite[p.~154]{A} guarantees that $g(x) = |h(x)|^2$, where $h \in L^2(\R)$ and ${\rm supp}(\widehat{h}) \subset [-\tfrac12,\tfrac12]$. As we have seen in Theorem \ref{Prop_20201220_16:37}, a natural choice is $g(x) = K_1(x) = \big(\frac{\sin \pi x}{ \pi x}\big)^2$, for which $\widehat{g}(\alpha) = (1 - |\alpha|)_+$ has the triangular graph. This corresponds to the 2choice $\widehat{h}(\alpha) = \chi_{[-\frac12, \frac12]}(\alpha)$ in Krein's decomposition, and yields the outcome of $4/3$. We experimented with polynomial perturbations of low degree (up to $8$) of this function and the search routine provided some better options, for instance
$$\widehat{h}(\alpha) = \big( 10 + 2\alpha^2 - 35\alpha^4\big)\, \chi_{[-\frac12, \frac12]}(\alpha)\,,$$
which yields the outcome
\begin{equation*}
\frac{\rho(g)}{\displaystyle \min_{0 \leq \alpha \leq 1} \big| P_{\widehat{g}}(\alpha) \big|} =1.33017\ldots.
\end{equation*}
This establishes the rightmost inequality in \eqref{20210205_11:56} and hence the upper bound proposed in Corollary \ref{Thm1_20201215}.

\subsubsection{Lower bound} \label{PfThm1_LB} The idea here is similar, now considering copies of a suitable function $g \in \mathcal{A}$ in the centered formulation of (EP3). Let $g \in \mathcal{A}$ and assume that $g(0) >0$ (this assumption is harmless here since $g(0) =0$ would yield an undesirable negative numerator in the formulation \eqref{20210204_11:03} below). For $m \in \N$ we define
\begin{equation}\label{20210204_14:16}
K_m(g) := \max_{\alpha \in \R} \sum_{n=0}^m \widehat{g}(\alpha +n).
\end{equation}
Note that 
\begin{equation*}
\int_{-1}^{0}  \left(\sum_{n=0}^m \widehat{g}(\alpha +n)\right)\,\d\alpha = \int_{-1}^{m} \widehat{g}(\alpha) \,\d\alpha \geq \int_{-\infty}^{\infty} \widehat{g}(\alpha) \,\d\alpha = g(0).
\end{equation*}
Hence $K_m(g) \geq g(0) >0$. The fact that the maximum is indeed attained in \eqref{20210204_14:16} follows from the fact that the sum is continuous and goes to zero as $|\alpha| \to \infty$ (Riemann-Lebesgue lemma). We observe that $\{K_m(g)\}_{m \in \N}$ is a non-increasing sequence and set
\begin{equation*}
K(g) := \lim_{m\to \infty} K_m(g) \geq g(0) >0.
\end{equation*}
Let $c_0$ be the constant given by \eqref{20201221_09:30}. We consider the following extremal problem.

\subsubsection*{Extremal problem 5 {\rm (EP5)}} Find the supremum
\begin{equation}\label{20210204_11:03}
{\bf C}^- := \sup_{\substack{0 \neq g \in \mathcal{A} \\ g(0) > 0}} \  \frac{g(0) + c_0 \big( \rho(g) - g(0)\big)}{K(g)}.
\end{equation}

\smallskip

Let us see how this fits into the framework of (EP3). Let $0 \neq g \in \mathcal{A}$ with $g(0) >0$ and assume without loss of generality that $K(g) = 1$. Given $\delta >0$ small, let $m_0 = m_0(\delta)$ be such that
\begin{equation}\label{20210204_14:58}
1 \leq K_m(g)\leq 1+ \delta
\end{equation} 
for $m \geq m_0$. Let $\beta$ be large, in particular with $2 \lfloor \beta \rfloor \geq m_0 +2$, and set $n= \lfloor \beta \rfloor$. In the framework of (EP3) we let $N = 2n-1$ and consider the configuration given by $\widehat{g_1} = \widehat{g_2} = \ldots = \widehat{g}_{2n-1} = \widehat{g}/(1 + \delta)$\,;\, $\eta_j = (n - j)$ for $j = 1,2, \ldots, 2n-1$; and $\frak{r}_1 = \frak{r}_2 = \ldots = \frak{r}_{2n-1} = \frak{r}$ given by
\begin{align*}
\frak{r} = \inf_{\substack{x \in \R \\ g(x) \neq 0}} \frac{ \frac{g(x)}{1 + \delta}\  {\rm Re} \left( \sum_{j=1}^{2n-1} e^{2\pi i (n-j) x}\right)}{(2n-1) \, \frac{g(x)}{1 + \delta}} = \min_{x \in \R} \frac{  D_{n-1}(2\pi x)}{(2n-1)} = \frac{\frak{m}(n-1)}{2n-1}.
\end{align*}
This assures the validity of \eqref{20210108_10:11}. From the fact that $g \in \mathcal{A}$ (in particular, the condition $\widehat{g}(\alpha) \leq 0$ for $|\alpha| \geq 1$), together with \eqref{20210204_14:16} and \eqref{20210204_14:58}, one can verify \eqref{20210107_21:13} (with $b = -\beta$). For this configuration, the outcome appearing in \eqref{20210111_09:03} yields
\begin{equation*}
 \frac{(2n-1)}{1 + \delta} \left( g(0) + \frac{\frak{m}(n-1)}{2n-1} \big( \rho(g) - g(0)\big)\right) \leq \mathcal{W}_*^{-}(-\beta, \beta).
\end{equation*}
By using \eqref{20210110_11:41}, we arrive at the inequality
\begin{equation*}
\frac{\beta}{1 + \delta}  \left(g(0) + c_0 \big( \rho(g) - g(0)\big) \right) - O(1) \leq \frac{\mathcal{W}_*^{-}(-\beta, \beta)}{2}\,,
 \end{equation*}
where the constant in $O(1)$ may depend on $g$, but not on $\beta$. Therefore, for any fixed $\varepsilon >0$, we have 
\begin{equation*}
\beta  \left({\bf C}^- \!- \varepsilon \right)  \leq \frac{\mathcal{W}_*^{-}(-\beta, \beta)}{2}
 \end{equation*}
 for large $\beta$. Hence,  for any fixed $\varepsilon >0$ and $b \geq 1$, from Corollary \ref{Cor_20210108} and a decomposition as in \eqref{20210113_09:52} we have 
\begin{equation*}%\label{20210113_10:31}
\ell \, ({\bf C}^- - \varepsilon) +o(1) \leq 
 \int_b^{b + \ell} F(\alpha,T) \, \d\alpha 
\end{equation*}
for large $\ell$, as $T \to \infty$. This establishes the lower bound proposed in Theorem \ref{Thm1_20201215*}.

\smallskip

As in the extremal problem (EP4), the precise value of the constant ${\bf C}^-$ is unknown to us but we can provide a reasonable approximation by working within the subclass $\mathcal{A}_0 \subset \mathcal{A}$. In this case, note that $K_m(g) = K_1(g)$ for all $m \in \N$. As argued before, if $g \in \mathcal{A}_0$, Krein's decomposition \cite[p.~154]{A} guarantees that $g(x) = |h(x)|^2$, where $h \in L^2(\R)$ and ${\rm supp}(\widehat{h}) \subset [-\tfrac12,\tfrac12]$. We have seen in Theorem \ref{17_25} that the choice $g(x) = \big(\frac{\sin \pi x}{ \pi x}\big)^2$, corresponding to $\widehat{h}(\alpha) = \chi_{[-\frac12, \frac12]}(\alpha)$, yields the outcome
$$1 + \frac{c_0}{3} = 0.92758\ldots.$$
Experimenting with polynomials perturbations of low degree (up to $8$) of this function, the search routine provided some slightly better options, for instance
$$\widehat{h}(\alpha) = \big( 5 - \alpha^2\big) \chi_{[-\frac12, \frac12]}(\alpha)\,,$$
which yields the outcome 
\begin{equation*}
\frac{g(0) + c_0 \big( \rho(g) - g(0)\big)}{K(g)} = 0.92781\ldots.
\end{equation*}
This establishes the leftmost inequality in \eqref{20210205_11:56} and hence the lower bound proposed in Corollary \ref{Thm1_20201215}.

\subsection{Limitations to mollifying $\zeta(s)$ on the critical line.}\label{Rad_sec} We now comment on an application of our explicit bounds for $F(\alpha)$. Following Radziwi\l\l \ \cite{R}, let 
\[
\mathcal{I}(M_\theta):= \frac{1}{T} \int_T^{2T} \big| 1 -\zeta(\tfrac{1}{2}+it) M_\theta(\tfrac{1}{2}+it) \big|^2 \, \mathrm{d}t, \quad \text{where } \ M_\theta(s) = \sum_{n\le T^\theta} \frac{a(n)}{n^s} 
\]
is a Dirichlet polynomial with $a(1)=1$ and $a(n) \ll_\varepsilon n^\varepsilon$ for all $\varepsilon>0$. For a fixed $\theta>0$, an important problem in the theory of the zeta function is to choose $M_\theta(s)$ so that $\mathcal{I}(M_\theta)$ is as small as possible, e.g.~ \cite{C,L}. In \cite[Theorem 1]{R}, it is shown that there is an absolute constant $c>0$ such that
\begin{equation} \label{Rad2}
\mathcal{I}(M_\theta) \ge \frac{c}{\theta}, 
\end{equation}
when $T$ is sufficiently large. When $\theta < \frac{1}{2}$, an unpublished argument of Soundararajan is presented which shows that $\mathcal{I}(M_\theta) \ge \frac{1}{\theta}+o(1)$, as $T\to\infty$. Assuming RH, Radziwi\l\l \ further connects the problem to the pair correlation of the zeros of $\zeta(s)$, by using a slight variant of our $F(\alpha)$ function, namely, 
\begin{align}\label{20210128_08:40}
\widetilde{F}(\alpha):=\widetilde{F}(\alpha,T) = \frac{2\pi}{T\log T} \sum_{T\leq \gamma,\gamma'\le 2T} T^{i \alpha (\gamma-\gamma')} w(\gamma-\gamma').
\end{align}
Under the additional assumption\footnote{In \cite{R}, the assumption is $a(p^k) \ll 1$ for primes $p$ and $k \in \mathbb N$, but it is sufficient to assume only the case $k = 1$ in Radziwi\l\l's proof.} that $a(p) \ll 1$ for primes $p$  %and $k\in\mathbb{N}$
, for fixed $\theta>0$ and sufficiently large $T$, \cite[Theorem 3]{R} gives
\begin{equation}\label{20210114_23:50}
\mathcal{I}(M_\theta) \ge \left( \frac{1}{2} + \int_1^{1+\theta+\varepsilon} \widetilde{F}(\alpha, T) \, \mathrm{d}\alpha \right)^{\!-1}
\end{equation}
assuming RH, where $\varepsilon>0$ is arbitrary. Note that when $\theta$ is large, under Montgomery's strong pair correlation conjecture, $c$ in \eqref{Rad2} can be taken to be $1^-$. Based upon these results, Radziwi\l\l \, suggests that the inequality \eqref{Rad2} holds with $c=1$ for all $\theta>0$.

\smallskip

With the alternative definition \eqref{20210128_08:40} we still have the validity of \eqref{inversion}, \eqref{F formula} and therefore \eqref{20201216_11:09}, and our framework yields the exact same bounds of \S \ref{Fourier_Opt}--\S \ref{Sub_Pf_Thm1} for the integral of $\widetilde{F}(\alpha)$ in bounded intervals. Relation \eqref{20210114_23:50} immediately leads us to the following corollary of Theorem \ref{17_25}.

\begin{corollary}\label{Rad} Assume RH. For fixed $\theta>0$ and $M_\theta(s)$ as above, assume also that $a(p) \ll 1$ for primes $p$. Then, as $T\to\infty$, we have
\[
\mathcal{I}(M_\theta) \ge \left( \frac{1}{2} \,+\, \mathcal{C}^+\big(1,1 + \theta\big) \right)^{\!-1} + o(1).
\]
\end{corollary}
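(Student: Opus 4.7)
The plan is to deduce this result as a direct consequence of Radziwi\l\l's inequality \eqref{20210114_23:50} combined with the upper bound for the integral of $\widetilde{F}(\alpha,T)$ on bounded intervals. First, I would invoke \cite[Theorem~3]{R} in the form \eqref{20210114_23:50}: under RH and the assumption $a(p)\ll 1$, for any fixed $\theta > 0$ and arbitrary $\varepsilon > 0$, one has
\[
\mathcal{I}(M_\theta) \;\ge\; \left(\tfrac{1}{2} + \int_1^{1+\theta+\varepsilon} \widetilde{F}(\alpha,T)\, \d\alpha\right)^{-1}
\]
for all sufficiently large $T$. Since the map $x\mapsto (\tfrac12 + x)^{-1}$ is positive and strictly decreasing on $[0,\infty)$, any \emph{upper} bound for the integral translates into a \emph{lower} bound for $\mathcal{I}(M_\theta)$.

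Next, I would apply the upper bound from Theorem \ref{17_25} to control the integral. As noted in the paragraph preceding the corollary, the variant $\widetilde{F}(\alpha,T)$ satisfies the asymptotic \eqref{F formula}, the Fourier inversion identity \eqref{inversion}, and the positivity that go into \eqref{20201216_11:09}; consequently the entire framework of Sections \ref{Fourier_Opt} and \ref{SS_Dir} transfers verbatim, yielding
\[
\int_1^{1+\theta+\varepsilon} \widetilde{F}(\alpha,T)\, \d\alpha \;\le\; \mathcal{C}^+\bigl(1,\,1+\theta+\varepsilon\bigr) + o(1)
\]
as $T\to\infty$. Plugging this into Radziwi\l\l's bound gives
\[
\mathcal{I}(M_\theta) \;\ge\; \Bigl(\tfrac12 + \mathcal{C}^+(1,1+\theta+\varepsilon) + o(1)\Bigr)^{-1}.
\]

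Finally I would remove the $\varepsilon$ by continuity. As observed right after Theorem \ref{17_25}, the function $\beta\mapsto \mathcal{C}^+(1,\beta)$ is continuous (indeed it is expressed via the continuous, piecewise-defined function $\mathcal{C}^+_{\blacktriangle}$ through \eqref{20201116_16:33}). Therefore $\mathcal{C}^+(1,1+\theta+\varepsilon)\to \mathcal{C}^+(1,1+\theta)$ as $\varepsilon\to 0^+$, and a standard diagonal argument (choose $\varepsilon = \varepsilon(T)\to 0$ slowly enough that the corresponding $o(1)$ from Theorem \ref{17_25} still tends to $0$) lets us replace $\varepsilon$ by $0$ in the final bound, producing
\[
\mathcal{I}(M_\theta) \;\ge\; \Bigl(\tfrac12 + \mathcal{C}^+(1,1+\theta)\Bigr)^{-1} + o(1),
\]
as desired. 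There is no serious obstacle here: the only mild technical point is verifying that one can pass to $\varepsilon=0$ in the limit, which is handled by the continuity of $\mathcal{C}^+(1,\cdot)$ together with the monotonicity of $(1/2+x)^{-1}$.
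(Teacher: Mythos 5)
Your proposal is correct and follows exactly the route the paper intends (the paper merely states that \eqref{20210114_23:50} ``immediately leads'' to the corollary via Theorem \ref{17_25} and the transference of the framework to $\widetilde{F}$): apply Radziwi\l\l's bound, use the upper bound $\mathcal{C}^+(1,1+\theta+\varepsilon)$ for the integral of $\widetilde{F}$, and remove $\varepsilon$ by continuity of $\beta\mapsto\mathcal{C}^+(1,\beta)$. The only remark is that the diagonal argument is not even needed, since for each fixed $\varepsilon$ one obtains a lower bound on $\liminf_{T\to\infty}\mathcal{I}(M_\theta)$ and can then let $\varepsilon\to0^+$ on the right-hand side alone.
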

When $\theta$ is large, from \eqref{20210114_23:50} and the discussion in \S \ref{PfThm1_UB} we see that, under RH and $a(p) \ll 1$, the value of $c$ in \eqref{Rad2} can be taken to be constant less than $1/{\bf C}^+$. We have seen that 
$$1/{\bf C}^+ > 1/(1.3302) > 0.7517,$$
which is close to the conjectured bound of 1.

\section{Primes in short intervals} \label{Sec_PSI_new}

\subsection{Sunrise approximations to the Fej\'{e}r kernel}\label{Sec_Sunrise_Approx}
In this subsection we develop some preliminaries for the upcoming discussion on the integral $J(\beta,T)$. The following extremal problem in analysis is going to be relevant for our purposes. \subsubsection*{Extremal problem 6 {\rm (EP6)}} Construct continuous functions $g^\pm: [0,\infty) \to \mathbb{R}$ verifying:
\begin{enumerate}
	\item[(i)] $g^\pm$ are non-increasing;
	\item[(ii)] $0 \leq g^-(x)\leq \bigg(\dfrac{\sin x}{x}\bigg)^2 \le g^+(x)$ for all $x\geq0$;
	\item[(iii)] $\int_{0}^{\infty}g^-(x)\,\dx$ is as large as possible and $\int_{0}^{\infty}g^+(x)\,\dx$ is as small as possible.
\end{enumerate}
This problem admits unique solutions with the functions $g^\pm$ constructed as follows. We have
\[
g^-(x) = \left\{ \begin{array}{cl}
\left(\dfrac{\sin x}{x}\right)^2, &\mbox{if $0 \leq x \le \pi$;} \\
0, &\mbox{if  $x \ge \pi $,}
\end{array} \right.
\]
with 
\begin{align}\label{20190715_05:44}
{ \bf L^-}:=\dfrac{\displaystyle\int_{0}^{\infty}g^-(x)\,\dx}{\displaystyle\int_{0}^{\infty}\bigg(\dfrac{\sin x}{x}\bigg)^2\dx} = \dfrac{2}{\pi}\displaystyle\int_{0}^{\pi} \bigg(\dfrac{\sin x}{x}\bigg)^2 \,\dx = 0.9028\ldots.
\end{align}
The construction of $g^+$ is as follows. Let $0=m_0 < m_1 < m_2 < m_3 <\ldots$ be the sequence of local maxima of $(\sin x/x)^2$ in $[0,\infty)$.  For each $k\ge 1$, let $a_k \in (m_{k-1},m_k)$ be such that $(\sin a_k/a_k)^2=(\sin m_k/m_k)^2$ (note that such $a_k$ indeed exists). Then $g^+$ is defined by
\[
g^+(x) = \left\{ \begin{array}{cl}
\left(\dfrac{\sin x}{x}\right)^2, &\mbox{if $x \in [m_{k-1},a_k)\, , \, k\geq 1$;} \\
\left(\dfrac{\sin m_k}{m_k}\right)^2, &\mbox{if  $x \in [a_k,m_k)\, , \, k\geq 1$,}
\end{array} \right.
\]
(see Figure \ref{figure3}) and a numerical verification yields
\begin{align}\label{20190705_11:49am}
{\bf L^+}:=\dfrac{\displaystyle\int_{0}^{\infty}g^+(x)\,\dx}{\displaystyle\int_{0}^{\infty}\bigg(\dfrac{\sin x}{x}\bigg)^2\dx} = \dfrac{2}{\pi}\displaystyle\int_{0}^{\infty}g^+(x)\,\dx = 1.0736\ldots.
\end{align}
The idea to consider this pair of functions is inspired in the classical sunrise lemma in harmonic analysis. When the sun rises over the graph of the Fej\'{e}r kernel from the right (resp. from the left) the visible portion is $g^+$ (resp. $g^-$). Throughout this section we reserve the notation $g^{\pm}$ for these sunrise approximations, and ${\bf L}^{\pm}$ for the constants in \eqref{20190715_05:44} and \eqref{20190705_11:49am}.
\begin{figure} 
	\includegraphics[scale=.3]{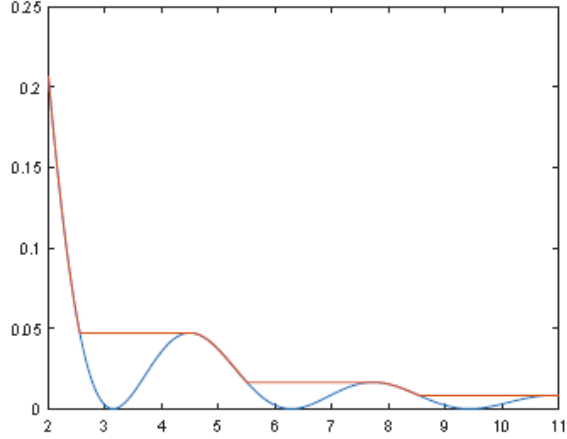}  
	\caption{Plots of $(\sin x /x)^2$ and $g^+(x)$ for $2 \le x\le 11$ }
	\label{figure3}
\end{figure}

%Note that $\frac{\mathrm{d}}{\mathrm{dx}} \big\{g^\pm(x) \big\} \le 0$ a.e. and that $g^+(x) \le \min\big\{1,\frac{1}{x^2}\big\}$.

\subsection{Asymptotic inequalities} The following lemma is a modification of Goldston \cite[Lemma 2]{G}, replacing the assumption of asymptotic relations in that paper by inequalities in the present setting. The sunrise approximations $g^\pm(x)$, from \S \ref{Sec_Sunrise_Approx}, play important roles in the proof below. 

\begin{lemma} \label{31_08_2:02am} Let $f:[0,\infty)\times [2,\infty)\to\R$ be a non-negative continuous function such that $f(t,\eta)\ll\log^2(t+2)$. Let $K(T,\eta):=\int_{0}^{T}f(t,\eta)\,\dt$ and $c \geq 0$.
	\begin{enumerate}
		\item[(i)] Suppose that $K(T,\eta) \leq \big(c+o(1)\big)\,T$, as $T\to\infty$, uniformly for $\eta\log^{-3}\eta\leq T \leq \eta\log^{3}\eta$. Then
		$$
		\int_{0}^{\infty}\bigg(\dfrac{\sin(\kappa t)}{t}\bigg)^2f(t,\eta)\,\dt\leq \big(c+o(1)\big)\,\dfrac{ \pi}{2}\,{\bf L}^+\,\kappa
		$$
		as $\kappa\to 0$, for $\eta\asymp 1/\kappa$.
		
		\smallskip
		
		\item[(ii)] Suppose that $\big(c+o(1)\big)\,T \le K(T,\eta) \ll T $, as $T\to\infty$, uniformly for $\eta\log^{-3}\eta\leq T \leq \eta\log^{3}\eta$. Then
		$$
		\int_{0}^{\infty}\bigg(\dfrac{\sin(\kappa t)}{t}\bigg)^2f(t,\eta)\, \dt\ge \big(c+o(1)\big)\,\dfrac{ \pi}{2}\,{\bf L}^-\,\kappa
		$$
		as $\kappa\to 0$, for $\eta\asymp 1/\kappa$.
	\end{enumerate}
\end{lemma}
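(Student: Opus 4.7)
The plan is to exploit the sunrise approximations $g^\pm$ introduced in Subsection \ref{Sec_Sunrise_Approx}, whose pointwise inequalities $g^-(x) \le (\sin x/x)^2 \le g^+(x)$ together with their monotonicity are precisely tailored to convert the integral $\int_0^\infty (\sin(\kappa t)/t)^2 f(t,\eta)\,\dt$ into an expression directly controlled by the hypothesis on $K(T,\eta)$.

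For part (i), I would first use the majorant to write
\begin{equation*}
\int_0^\infty \bigg(\dfrac{\sin(\kappa t)}{t}\bigg)^{\!2} f(t,\eta)\,\dt \,\leq\, \kappa^2 \int_0^\infty g^+(\kappa t)\, f(t,\eta)\,\dt,
\end{equation*}
and integrate by parts in the Riemann--Stieltjes sense against the non-decreasing function $K(\cdot,\eta)$. The boundary contributions vanish because $g^+(x)\ll 1/x^2$ for $x\ge1$ while $K(t,\eta)\ll t\log^{2}(t+2)$; after the change of variables $u=\kappa t$ this gives
\begin{equation*}
\kappa^2 \int_0^\infty g^+(\kappa t)\, f(t,\eta)\,\dt \,=\, \kappa^2 \int_0^\infty K(u/\kappa,\eta)\,\dd(-g^+)(u),
\end{equation*}
where $\dd(-g^+)$ is a positive measure by monotonicity. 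I would then split this last integral at the endpoints $A:=\kappa\eta\log^{-3}\eta$ and $B:=\kappa\eta\log^{3}\eta$ of the interval where the hypothesis applies. On the main block $[A,B]$, substituting $K(u/\kappa,\eta)\le (c+o(1))u/\kappa$ and extending back to all of $[0,\infty)$ yields
\begin{equation*}
\kappa^2\int_A^B K(u/\kappa,\eta)\,\dd(-g^+)(u)\, \leq\, (c+o(1))\,\kappa \int_0^\infty u\,\dd(-g^+)(u) \,=\, (c+o(1))\,\kappa\cdot\tfrac{\pi}{2}\,{\bf L}^+,
\end{equation*}
the final identity following from a second integration by parts and the definition \eqref{20190705_11:49am} of ${\bf L}^+$.

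The core remaining work, and the main technical obstacle, is to verify that the two tails of the $u$-integral are $o(\kappa)$. On $[0,A]$ the expansion $g^+(x)=1-x^2/3+O(x^4)$ near the origin forces $\dd(-g^+)$ to vanish to first order there, so $\int_0^A u\,\dd(-g^+)(u)\ll A^3$; combined with the trivial bound $K(u/\kappa,\eta)\ll(u/\kappa)\log^{2}(u/\kappa+2)$ together with $u/\kappa\le \eta\log^{-3}\eta$ on this range, this block contributes $\ll \kappa\log^{2}\eta \cdot A^3 \asymp \kappa\log^{-7}\eta = o(\kappa)$. On $[B,\infty)$ the decay $g^+(u)\ll 1/u^2$ together with another integration by parts reduces the tail to $O(\kappa\log^{-1}\eta)=o(\kappa)$. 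Part (ii) follows along exactly the same lines: the minorant $g^-(x)\le(\sin x/x)^2$ supported on $[0,\pi]$ eliminates the right tail entirely, the left block $[0,A]$ can be discarded since $f\ge 0$ makes the integrand non-negative, and the main block $[A,\pi]$ supplies $(c+o(1))\,\kappa\int_0^\pi g^-(u)\,\du=(c+o(1))\,\kappa\cdot\tfrac{\pi}{2}\,{\bf L}^-$ by \eqref{20190715_05:44}. The width $\log^{\pm 3}\eta$ of the hypothesis range is crucial: it is large enough to absorb the logarithmic losses from the trivial bound on $K$ in both tails.
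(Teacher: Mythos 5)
Your argument is correct and rests on the same two pillars as the paper's proof: replacing the Fej\'er kernel by the sunrise majorant/minorant $g^\pm$ and then integrating by parts against $K(\cdot,\eta)$ so that the hypothesis $K(T,\eta)\lessgtr(c+o(1))T$ can be inserted, with a second integration by parts producing $\int_0^\infty g^\pm$ and hence $\tfrac{\pi}{2}{\bf L}^\pm$. The difference is purely organizational, and it is a nice simplification. The paper first cuts $[0,\infty)$ at $\eta\log^{-3}\eta$, $\eta\log\eta$, $\eta\log^{3}\eta$ into four pieces $A_1,\dots,A_4$, estimates the three outer pieces by direct trivial bounds (without $g^+$), and applies $g^+$ plus integration by parts only on the central block $A_2=[\eta\log^{-3}\eta,\eta\log\eta]$; the additional block $A_3$ on $[\eta\log\eta,\eta\log^{3}\eta]$ is handled by a separate integration by parts. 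You instead apply $g^+$ and the global Riemann--Stieltjes integration by parts once over all of $[0,\infty)$, and only then split the resulting $u$-integral at $A\asymp\log^{-3}\eta$ and $B\asymp\log^{3}\eta$, reducing the count of cases from four to three and eliminating the analogue of $A_3$ entirely. Your tail estimates check out: near $u=0$ the expansion $g^+(u)=1-u^2/3+O(u^4)$ makes $\int_0^A u\,\dd(-g^+)\ll A^3$ so the left tail is $\ll\kappa\log^2\eta\cdot A^3\asymp\kappa\log^{-7}\eta$, and on $[B,\infty)$ undoing the integration by parts and using $g^+(u)\ll 1/u^2$ gives the tail $\ll\kappa/\log\eta$, matching the paper's $A_4$ estimate. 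The treatment of (ii), discarding the non-negative contribution on $[0,A]$ and using that $g^-$ is supported in $[0,\pi]$, is also right (and one should note, as you implicitly do, that $\pi/\kappa\le\eta\log^{3}\eta$ for large $\eta$ with $\eta\asymp 1/\kappa$, so the hypothesis applies on all of $[A,\pi]$). The only point worth stating explicitly is the degenerate case $c=0$ in (ii), where the conclusion is trivial by non-negativity; for $c>0$ one may multiply through by $c+o(1)>0$ exactly as you suggest.
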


\begin{proof} We only prove part (i), as the proof of part (ii) follows the same outline. We suppose that $\eta\asymp 1/\kappa$ and divide the integral to be bounded into four ranges:
	\[
	\begin{split}
	\int_{0}^{\infty}\bigg(\dfrac{\sin(\kappa t)}{t}\bigg)^2f(t,\eta)\,\dt &=\int_{0}^{\eta\log^{-3}\eta} + \int_{\eta\log^{-3}\eta}^{\eta\log\eta} + \int_{\eta\log\eta}^{\eta\log^{3}\eta} + \int_{\eta\log^{3}\eta}^{\infty}
	\\
	&:=A_1+A_2+A_3+A_4.
	\end{split}
	\]
	The main contribution will come from $A_2$, while the integrals $A_1, A_3,$ and $A_4$ will contribute an error term.  Using the fact that $f(t,\eta)\ll\log^2(t+2)$, we have
	\[
	A_1= \kappa^2\int_{0}^{\eta\log^{-3}\eta}\bigg(\dfrac{\sin(\kappa t)}{\kappa t}\bigg)^2f(t,\eta)\, \dt \ll \kappa^2\int_{0}^{\eta\log^{-3}\eta}\log^2(t+2)\,\dt \ll\kappa^2\dfrac{\eta}{\log \eta} \ll \dfrac{\kappa}{\log \eta}
	\]
	and
	\[
	A_4 = \int_{\eta\log^{3}\eta}^{\infty}\bigg(\dfrac{\sin(\kappa t)}{t}\bigg)^2f(t,\eta)\,\dt \ll \int_{\eta\log^{3}\eta}^{\infty}\dfrac{\log^2 t}{t^2}\, \dt \ll \dfrac{1}{\eta\log\eta}\ll \dfrac{\kappa}{\log\eta}.
	\]
	Since $f$ is non-negative, we use integration by parts to get
	\[
	\begin{split}
	A_3 & = \int_{\eta\log\eta}^{\eta\log^{3}\eta}\bigg(\dfrac{\sin(\kappa t)}{t}\bigg)^2f(t,\eta)\,\dt \leq \int_{\eta\log\eta}^{\eta\log^{3}\eta}\dfrac{1}{t^2}\,(K(t,\eta))'\, \dt  
	\\
	&  = \dfrac{K(\eta\log^{3}\eta,\eta)}{(\eta\log^3\eta)^2} -\dfrac{K(\eta\log\eta,\eta)}{(\eta\log\eta)^2} + 2\int_{\eta\log\eta}^{\eta\log^{3}\eta}\dfrac{1}{t^3}\,K(t,\eta)\,\dt  \ll\dfrac{1}{\eta\log\eta}\ll \dfrac{\kappa}{\log\eta}.
	\end{split}
	\]
	We now analyze the contribution from the integral $A_2$. Using integration by parts, we have
	\begin{align*} 	 
	A_2  &= \kappa^2\int_{\eta\log^{-3}\eta}^{\eta\log\eta}\bigg(\dfrac{\sin(\kappa t)}{\kappa t}\bigg)^2f(t,\eta)\,\dt \leq \kappa^2\int_{\eta\log^{-3}\eta}^{\eta\log\eta}g^+(\kappa t)\,f(t,\eta)\,\dt 
	\\
	& = \kappa^2\int_{\eta\log^{-3}\eta}^{\eta\log\eta}\big(-g^+(\kappa t)\big)'K(t,\eta)\,\dt +O\left(\frac{\kappa}{\log\eta}\right) ,
	\end{align*}
	where we have used the fact that $g^+(x) \le \min\big\{1,\frac{1}{x^2}\big\}$ to estimate the error term above.
	Since $g^+$ is non-increasing and absolutely continuous, we get
	\[
	\begin{split}
	\kappa^2\int_{\eta\log^{-3}\eta}^{\eta\log\eta}\big(-g^+(\kappa t)\big)'K(t,\eta)\,\dt & \leq   \kappa^2 \int_{\eta\log^{-3}\eta}^{\eta\log\eta}\big(-g^+(\kappa t)\big)'\,t\,(c+o(1))\,\dt. 
	\end{split}
	\]
	Again using that $g^+(x) \le \min\big\{1,\frac{1}{x^2}\big\}$, an integration by parts yields
	\begin{align*}
	\kappa^2\int_{\eta\log^{-3}\eta}^{\eta\log\eta}\big(-g^+(\kappa t)\big)'\,t\,\dt & = \kappa^2\int_{\eta\log^{-3}\eta}^{\eta\log\eta}g^+(\kappa t)\,\dt + O\bigg(\dfrac{\kappa}{\log\eta}\bigg) \\
	& = \kappa^2\int_{0}^{\infty}g^+(\kappa t)\,\dt - \kappa^2\int_{0}^{\eta\log^{-3}\eta}g^+(\kappa t)\,\dt -\kappa^2\int_{\eta\log\eta}^{\infty} g^+(\kappa t)\,\dt + O\bigg(\dfrac{\kappa}{\log\eta}\bigg)  \\
	& = \kappa\int_{0}^{\infty}g^+(t)\,\dt + O\bigg(\dfrac{\kappa}{\log\eta}\bigg). 
	\end{align*}
	Combining estimates, the lemma follows. 
\end{proof}

\subsection{Relating primes in short intervals to pair correlation}\label{PrimeSection} Our next theorem gives an explicit relationship between the integral $J(\beta,T)$ and the integral of $F(\alpha)$ in bounded intervals. % as follows.
\begin{theorem} \label{Primes}
Assume RH and let $\beta > b >0$. Let \,${\bf L}^{-}$ \!and \, ${\bf L}^{+}$ be the constants defined in \eqref{20190715_05:44} and \eqref{20190705_11:49am}. Then, as $T\to\infty$, we have
\begin{align}\label{20201211_12:35}
\begin{split}
{\bf L}^- \left( \lim_{\varepsilon \to 0^+}\, \liminf_{\tau \to \infty} \int_{b + \varepsilon}^{\beta - \varepsilon} F(\alpha, \tau) \,\mathrm{d} \alpha + o(1)\right)\, & \frac{\log^2T}{T}  \leq J(\beta,T)- J(b,T) \\
& \le {\bf L}^+ \left( \lim_{\varepsilon \to 0^+}\, \limsup_{\tau \to \infty} \int_{b - \varepsilon}^{\beta + \varepsilon} F(\alpha, \tau) \,\mathrm{d} \alpha + o(1)\right)\, \frac{\log^2T}{T}.
\end{split}
\end{align}
\end{theorem}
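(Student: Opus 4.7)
The plan is to derive an integral representation for $J(\beta,T)-J(b,T)$ as an integral of $F(\alpha,T)$ against a squared-sinc kernel, and then to invoke Lemma \ref{31_08_2:02am} to convert this representation into bounds governed by $\int_b^\beta F(\alpha,\tau)\,\d\alpha$, with the sunrise constants ${\bf L}^{\pm}$ emerging from the majorant $g^+$ and minorant $g^-$ of $(\sin x/x)^2$ supplied by extremal problem (EP6).

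For Step 1 (the integral representation), I would use the truncated explicit formula under RH,
\[
\psi(x+x/T)-\psi(x)-x/T \;=\;-\sum_{|\gamma|\le U}\dfrac{x^{1/2+i\gamma}\bigl((1+1/T)^{1/2+i\gamma}-1\bigr)}{1/2+i\gamma}+E(x,T),
\]
square it, and integrate against $\dx/x^2$. After the substitution $x=e^u$ and the representation $B_\gamma:=A_\gamma/(1/2+i\gamma)=\int_0^{L_T}e^{y(1/2+i\gamma)}\,\dy$ with $L_T=\log(1+1/T)$, the difference $J(\beta,T)-J(b,T)$ becomes $\int_{b\log T}^{\beta\log T}G(u)^2\,\du$, which expands as a double sum over pairs $(\gamma,\gamma')$ with coefficients $B_\gamma\overline{B_{\gamma'}}$ and an inner factor $\int_{b\log T}^{\beta\log T}e^{iu(\gamma-\gamma')}\,\du$. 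Centering $u$ about $(b+\beta)\log T/2$ transforms this inner factor into $\int_{-L}^{L}e^{iv(\gamma-\gamma')}\,\dv$ with $L=(\beta-b)\log T/2$. Applying the Fourier-inversion identity \eqref{inversion} with a test function whose Fourier transform matches the indicator of $[b,\beta]$ scaled by $\log T$ then recasts the double sum (modulo controllable errors) as an integral of $F(\alpha,T)$ against a kernel of the shape $(\sin(\pi(\beta-b)\alpha)/(\pi\alpha))^{2}$, after a change of scale $\alpha=t/\log T$.

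For Step 2 (invoking Lemma \ref{31_08_2:02am}), set $f(t,\eta)$ to be $F(\alpha,\eta)$ suitably shifted and rescaled, and take $\kappa\asymp 1/\log T$ small. The hypothesized upper bound on $\int_{b-\varepsilon}^{\beta+\varepsilon}F(\alpha,\tau)\,\d\alpha$ (respectively the lower bound on $\int_{b+\varepsilon}^{\beta-\varepsilon}F(\alpha,\tau)\,\d\alpha$) supplies the hypothesis $K(T,\eta)\le(c+o(1))T$ (respectively $\ge$) uniformly for $T\in[\eta\log^{-3}\eta,\eta\log^{3}\eta]$, and the conclusion of Lemma \ref{31_08_2:02am} delivers
\[
\int_0^\infty\!\Bigl(\dfrac{\sin(\kappa t)}{t}\Bigr)^{\!2} f(t,\eta)\,\d t \;\le\;(c+o(1))\,\dfrac{\pi}{2}\,{\bf L}^{+}\,\kappa
\]
(respectively with ${\bf L}^{-}$ in the lower bound). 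Back-substitution into the representation from Step 1 yields exactly the upper and lower bounds in \eqref{20201211_12:35}; the $\varepsilon$-buffer on the interval and the $\lim_{\varepsilon\to 0^+}\liminf/\limsup_{\tau\to\infty}$ structure encode precisely the uniformity required by the lemma.

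The main technical obstacle is Step 1. Three delicate points arise: (i) zeros with $|\gamma|\gtrsim T$, where the approximation $B_\gamma\approx 1/T$ breaks down and one must exploit oscillation to show they contribute at lower order; (ii) controlling the error term $E(x,T)$ so that its contribution to the squared integral against $\dx/x^2$ is $o(\log^2T/T)$; and (iii) aligning the emerging weight on $(\gamma-\gamma')$ with the factor $w(\gamma-\gamma')=4/(4+(\gamma-\gamma')^2)$ so as to apply \eqref{inversion} cleanly. The sunrise refinement built into Lemma \ref{31_08_2:02am}, which handles inequality bounds via the sandwich $g^-\le(\sin x/x)^2\le g^+$ (in contrast to the asymptotic-equality setting of Goldston \cite{G}), is precisely what enables the passage from one-sided integral bounds on $F$ to the one-sided bounds on $J(\beta,T)-J(b,T)$ claimed in the theorem.
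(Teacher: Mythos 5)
Your high-level architecture is the right one --- explicit formula, a double sum over zeros carrying a Fej\'er-type factor, and the sunrise Lemma \ref{31_08_2:02am} to produce ${\bf L}^{\pm}$ --- but two of your intermediate steps have genuine gaps. In Step 1, the double sum you obtain from the sharp cutoff $x\in[T^{b},T^{\beta}]$ has coefficients $B_\gamma\overline{B_{\gamma'}}$ that depend on $\gamma$ and $\gamma'$ separately, not only on $\gamma-\gamma'$, so it is not of the form $\sum R\big((\gamma-\gamma')\tfrac{\log T}{2\pi}\big)w(\gamma-\gamma')$ and \eqref{inversion} cannot be applied to it directly; disentangling the two factors is exactly what the auxiliary $t$-integral in \cite[Eq.~(8)]{GG} (the paper's \eqref{2eq}) accomplishes, and your plan collapses that step. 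Moreover, even setting this aside, a test function whose Fourier transform is the sharp indicator of $[b,\beta]$ is inadmissible in \eqref{inversion} ($R$ would be a non-integrable sinc), and the contribution of $F$ from outside $[b,\beta]$ under any regularization is not an $o(1)$ error. The paper resolves both points by sandwiching the sharp cutoff between smooth Schwartz cutoffs $\widehat g$ with $\widehat g\equiv 1$ on $[a_2,a_3]$ and $\supp(\widehat g)\subset[a_1,a_4]$ (inequality \eqref{1eq}); this smooth sandwich, not the uniformity hypothesis of the lemma, is the actual source of the $\varepsilon$-buffers and the $\lim_{\varepsilon\to 0^+}$ in the statement.

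In Step 2 the objects fed into Lemma \ref{31_08_2:02am} are misidentified. In the correct application, $f(t,\eta)$ is the squared sum over zeros $\big|\sum_\gamma g\big((t-\gamma)\tfrac{\log\eta}{2\pi}\big)\big|^2+\big|\sum_\gamma g\big((\gamma-t)\tfrac{\log\eta}{2\pi}\big)\big|^2$ viewed as a function of the ordinate variable $t$, not $F(\alpha,\eta)$, and $\kappa=\tfrac12\log(1+1/T)\asymp 1/T$, not $1/\log T$: the kernel $(\sin(\kappa t)/t)^2$ is the Fej\'er factor generated by the short interval $[x,x+x/T]$, not the kernel in the $\alpha$-variable generated by the cutoff in $x$. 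Consequently, the passage from the hypothesis on $\int_{b-\varepsilon}^{\beta+\varepsilon}F(\alpha,\tau)\,\d\alpha$ to the lemma's hypothesis $K(T,\eta)\le (c+o(1))T$ is not automatic; it requires the separate identity $\int_0^T f(t,\eta)\,\dt = 2T\int_0^\infty F(\alpha,T)\,|\widehat g(\alpha)|^2\,\d\alpha+o(T)$, valid uniformly for $\eta\log^{-3}\eta\le T\le\eta\log^{3}\eta$ (Goldston's Eqs.~(5.1)--(5.3); see \eqref{19_29}), together with $F\ge 0$ and the support properties of $\widehat g$. Your proposal omits this bridge entirely, and without it there is no route from one-sided bounds on the bounded-interval integrals of $F$ to the hypotheses of the Tauberian lemma.
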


\noindent {\sc Remark:} From \eqref{asymp} and \eqref{F formula} it should be clear that, when $0 < b \leq 1$, the lower endpoints in the integrals appearing in \eqref{20201211_12:35} can be taken to be $b$ $($instead of $b+ \varepsilon$ and $b- \varepsilon$, respectively$)$. For the lower bound when $0 < b < 1$ and the upper bound when $0 < b \leq 1$ this follows directly by \eqref{F formula}. For the lower bound when $b =1$, we estimate instead $J(\beta,T)- J(1-\delta,T)$ and then send $\delta \to 0$ using \eqref{asymp}.

\smallskip

From \eqref{asymp}, Theorem \ref{17_25}, Corollary \ref{Cor_20210111_13:32} and Theorem \ref{Primes} (including the remark thereafter) we immediately get the following corollary.
\begin{corollary}\label{Cor4}
Assume RH and let $\beta > 1$. Then, as $T\to\infty$, we have
\begin{equation}\label{20201211_14:57}
\left( {\bf L}^- \, \mathcal{C}^-(1, \beta) + \frac{1}{2} + o(1)\right) \frac{\log^2T}{T}  \leq J(\beta,T) \leq \left( {\bf L}^+ \, \mathcal{C}^+(1, \beta) + \frac{1}{2} + o(1)\right) \frac{\log^2T}{T}.
\end{equation}
In general, if $\beta > b >1$, as $T\to\infty$, we have
\begin{equation}\label{20201211_14:44}
\left( {\bf L}^- \, \mathcal{C}^-(b, \beta)  + o(1)\right) \frac{\log^2T}{T}  \leq J(\beta,T) - J(b, T) \leq \left( {\bf L}^+ \,\mathcal{C}^+(b, \beta)  + o(1)\right) \frac{\log^2T}{T}.
\end{equation}
\end{corollary}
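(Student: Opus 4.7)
The plan is to combine the four cited results. I would begin with the second inequality \eqref{20201211_14:44}, which is the more direct of the two. Fix $\beta > b > 1$. By Theorem \ref{Primes},
\est{
{\bf L}^- &\Big(\lim_{\varepsilon\to 0^+}\liminf_{\tau\to\infty} \int_{b+\varepsilon}^{\beta-\varepsilon} F(\alpha,\tau)\,\d\alpha + o(1)\Big)\frac{\log^2 T}{T} \\
&\leq J(\beta,T)-J(b,T) \\
&\leq {\bf L}^+ \Big(\lim_{\varepsilon\to 0^+}\limsup_{\tau\to\infty} \int_{b-\varepsilon}^{\beta+\varepsilon} F(\alpha,\tau)\,\d\alpha + o(1)\Big) \frac{\log^2 T}{T}.
}
Corollary \ref{Cor_20210111_13:32} bounds the inner $\liminf$ from below by $\mathcal{C}^-(b+\varepsilon,\beta-\varepsilon)$ and the inner $\limsup$ from above by $\mathcal{C}^+(b-\varepsilon,\beta+\varepsilon)$. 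Since $\mathcal{C}^\pm(\cdot,\cdot)$ are continuous in their arguments (as observed following Theorems \ref{Prop_20201220_16:37} and \ref{17_25}), letting $\varepsilon \to 0^+$ yields \eqref{20201211_14:44}.

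For the first inequality \eqref{20201211_14:57}, I would split $J(\beta,T) = J(1,T) + \big(J(\beta,T) - J(1,T)\big)$. The Gallagher--Mueller asymptotic \eqref{asymp}, evaluated at $\beta = 1$, gives $J(1,T) = \big(\tfrac{1}{2} + o(1)\big)\tfrac{\log^2 T}{T}$, which accounts for the extra $\tfrac{1}{2}$ appearing on both sides of \eqref{20201211_14:57}. For the difference, I would invoke Theorem \ref{Primes} with $b = 1$, using the remark following that theorem to fix the lower endpoint exactly at $1$ rather than $1 \pm \varepsilon$; the lower bound here requires the slightly more delicate $\delta \to 0$ argument described in that remark, which leans on \eqref{asymp} to absorb $J(1,T) - J(1-\delta,T)$. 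Theorem \ref{17_25} then bounds the integral of $F(\alpha,\tau)$ over $[1,\beta+\varepsilon]$ by $\mathcal{C}^+(1,\beta+\varepsilon) + o(1)$ from above and the integral over $[1,\beta-\varepsilon]$ by $\mathcal{C}^-(1,\beta-\varepsilon) + o(1)$ from below, after which the continuity of $\mathcal{C}^\pm(1,\cdot)$ in the second argument permits passage to $\varepsilon \to 0^+$.

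The main obstacle, if it warrants that name, is careful bookkeeping: tracking how the $\varepsilon$-perturbations at the endpoints interact with the $o(1)$ terms, invoking continuity of the bounding functions $\mathcal{C}^\pm$ to let $\varepsilon \to 0^+$, and handling the subtle endpoint issue at $b = 1$ in the lower bound via the $\delta \to 0$ trick from the remark after Theorem \ref{Primes}. No new ideas are required beyond the already assembled machinery.
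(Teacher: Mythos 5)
Your proposal is correct and follows exactly the same route the paper sketches: the paper states the corollary is immediate from \eqref{asymp}, Theorem \ref{17_25}, Corollary \ref{Cor_20210111_13:32}, and Theorem \ref{Primes} with the remark thereafter, which is precisely the combination you carry out, with the same split $J(\beta,T) = J(1,T) + (J(\beta,T)-J(1,T))$ and the same $\varepsilon \to 0^+$ / continuity bookkeeping. Nothing to add.
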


Previously, assuming RH, Goldston and Gonek in \cite{GG} had proved that for any $b >0$ one has 
\begin{align*}   
(0.307 + o(1))\,\dfrac{\log^2T}{T}\leq J(b+2,T)-J(b,T)\leq (21.647 + o(1)) \, \dfrac{\log^2T}{T}
\end{align*}
as $T \to \infty$. As we already observed in the introduction, from this estimate one can deduce that, for large $\beta$, 
\begin{align*} 
(0.153 \beta+ o(1))\,\dfrac{\log^2T}{T}\leq J(\beta,T)\leq (10.824 \beta + o(1)) \, \dfrac{\log^2T}{T}
\end{align*}
(the lower bound actually holds for all $\beta > 1$). In direct comparison, \eqref{20201119_10:35}, \eqref{20201119_10:36} and \eqref{20201211_14:44} imply that 
\begin{align}\label{20210111_14:13}
\left( \frac{2}{3}\,{\bf L}^-  + o(1)\right) \frac{\log^2T}{T}  \leq J(b+2,T) - J(b, T) \leq \left(  \frac{15}{4}\,{\bf L}^+  + o(1)\right) \frac{\log^2T}{T},
\end{align}
as $T \to \infty$. The constants in \eqref{20210111_14:13} are $ \tfrac{2}{3}\,{\bf L}^- = 0.6018\ldots$ and $\frac{15}{4}\,{\bf L}^+ = 4.026\ldots$. For large $\beta$, inequality \eqref{20201211_14:57} in Corollary \ref{Cor4} implies that, in \eqref{MontgomeryBound},  $D^-$ can be taken to be any constant less than ${\bf L}^-\left(1 + \frac{c_0}{3}\right) = 0.8374\ldots$ while $D^+$ can be taken to be any constant greater than $\frac{4}{3}{\bf L}^+ = 1.431\ldots$. These values are substantially closer to the conjectured value $1$. We now establish the further small improvement proposed in Theorem \ref{Thm_20201215_01:27*} and Corollary \ref{Thm_20201215_01:27}.

\begin{proof}[Proof of Theorem \ref{Thm_20201215_01:27*} and Corollary \ref{Thm_20201215_01:27}] From Theorem \ref{Primes} and Theorem \ref{Thm1_20201215*}, we see that, for large $\beta$, the value $D^+$ in \eqref{MontgomeryBound} can be taken to be any constant greater than ${\bf L}^+ {\bf C}^+$. We have shown that ${\bf L}^+ {\bf C}^+ < {\bf L}^+ (1.3302) < 1.4283$. Similarly, Theorem \ref{Primes} and and Theorem \ref{Thm1_20201215*} show that the value $D^-$ in \eqref{MontgomeryBound} can be taken to be any constant less than ${\bf L}^- {\bf C}^-$. We have showed that ${\bf L}^- {\bf C}^- >  {\bf L}^- (0.9278) > 0.8376$. This completes the proof.

\end{proof}

\begin{proof}[Proof of Theorem \ref{Primes}]
We partially follow the idea developed by Goldston and Gonek in \cite{GG}. Throughout the proof let 
\begin{equation*}
0 \leq a_1 < a_2 < a_3 < a_4
\end{equation*} 
be fixed real numbers (that will be conveniently specialized later). We let $g:=g_{a_1,a_2,a_3,a_4}: \R \to \C$ be a Schwartz function verifying 
\begin{equation*}
 |\widehat{g}| \le 1  \ {\rm on } \  \R \ \ ; \ \ \mathrm{supp}(\widehat{g}) \subset [a_1,a_4] \ \ ; \ \ \widehat{g}\equiv 1  \  {\rm on}  \  [a_2,a_3].
\end{equation*}
Then, from definition \eqref{20200918_12:11}, we plainly see that 
\begin{equation}\label{1eq}
J(a_3,T)- J(a_2,T) \le \int_1^\infty \left( \psi\left(x + \frac{x}{T}\right)-\psi(x) - \frac{x}{T} \right)^2 \left| \,\widehat{g}\left(\frac{\log x}{\log T} \right) \right|^2 \frac{\mathrm{d} x}{x^2} \le J(a_4,T)- J(a_1,T).
\end{equation}
From \cite[Eq. (8)]{GG}, with $e^{2 \kappa}=1+\frac{1}{T}$, we have
\begin{align}\label{2eq}
\begin{split} 
& \int_1^\infty \left( \psi\left(x + \frac{x}{T}\right)-\psi(x) - \frac{x}{T} \right)^2  \left|\, \widehat{g}\left(\frac{\log x}{\log T} \right) \right|^2  \, \frac{\mathrm{d} x}{x^2} \\
& = \frac{2}{\pi} \log^2 T \int_0^\infty \left( \frac{\sin(\kappa t )}{t} \right)^2 \left( \left| \sum_\gamma g\left((t-\gamma)\frac{\log T}{2\pi}\right) \right|^2 + \left|\sum_\gamma g\left((\gamma - t)\frac{\log T}{2\pi}\right) \right|^2 \right) \,\mathrm{d} t + O(1/T).
\end{split}
\end{align}
The implicit constant in the error term above may, in principle, depend on the function $g$. From now on let us write
\begin{equation*}
f(t,\eta) :=  \left| \sum_\gamma g\left((t-\gamma)\frac{\log \eta}{2\pi}\right) \right|^2 + \left|\sum_\gamma g\left((\gamma - t)\frac{\log \eta}{2\pi}\right) \right|^2.
\end{equation*}
Using \cite[Eqs. (5.1), (5.2) and (5.3)]{G}\footnote{See also \cite[Eq. (7)]{GG}, where there seems to be a typo and the lower endpoint of the integral should be zero.} we get
\[
\int_0^T f(t,\eta)  \,\mathrm{d} t = 2 T \int_0^\infty F(\alpha, T)\, |\widehat{g}(\alpha)|^2 \,\mathrm{d} \alpha+o(T),
\]
uniformly for $\eta\log^{-3}\eta\leq T \leq \eta\log^{3}\eta$. In this range of $T$ and $\eta$, using our assumptions on $\widehat g$ and the fact that $F\ge 0$, we arrive at 
\begin{align} \label{19_29} 
\begin{split}
& \left(2 \liminf_{\tau \to \infty} \int_{a_2}^{a_3} F(\alpha, \tau) \,\mathrm{d} \alpha + o(1)\right)  T   \leq \left(2 \int_{a_2}^{a_3} F(\alpha, T) \,\mathrm{d} \alpha + o(1)\right)T  \\
& \ \ \ \ \ \ \ \ \ \ \ \ \ \ \ \ \ \ \ \ \ \ \ \ \  \le \int_0^T f(t,\eta)  \,\mathrm{d} t \\
&\ \ \ \ \ \ \ \ \ \ \ \ \ \ \ \ \ \ \ \ \ \ \ \ \  \leq \left(2 \int_{a_1}^{a_4} F(\alpha, T) \,\mathrm{d} \alpha + o(1)\right) T \leq \left(2 \limsup_{\tau \to \infty} \int_{a_1}^{a_4} F(\alpha, \tau) \,\mathrm{d} \alpha + o(1)\right) T.
\end{split}
\end{align}

\noindent {\it Upper bound}.  From the fast decay of $g$ and the classical estimate for the number of zeros in an interval, one can show that $f(t,\eta)\ll\log^2(t+2)$ (see, for instance, \cite[p. 618]{GG}). Then, by \eqref{19_29} and Lemma \ref{31_08_2:02am} (i), we obtain 
\begin{align} \label{20_17}
\int_{0}^{\infty}\bigg(\dfrac{\sin(\kappa t)}{t}\bigg)^2f(t,\eta)\,\dt\leq \left(2 \limsup_{\tau \to \infty} \int_{a_1}^{a_4} F(\alpha, \tau) \,\mathrm{d} \alpha + o(1)\right)\dfrac{ \pi}{2}\,{\bf L}^+\,\kappa
\end{align} 
as $\kappa\to 0$, for $\eta\asymp 1/\kappa$. Choosing $\eta=T$ in \eqref{20_17}, and combining with \eqref{1eq} and \eqref{2eq} (recall that $\kappa = \frac{1}{2T}( 1 + o(1))$) we get
\begin{equation*}
J(a_3,T)- J(a_2,T) \le \left( \limsup_{\tau \to \infty} \int_{a_1}^{a_4} F(\alpha, \tau) \,\mathrm{d} \alpha + o(1)\right){\bf L}^+\, \frac{\log^2T}{T}
\end{equation*}
as $T\to \infty$. At this point we can take $a_3 = \beta$, $a_2 = b$, $a_1 \to a_2^-$ and $a_4 \to a_3^+$ to conclude.

\medskip

\noindent {\it Lower bound}.  By \eqref{19_29} and Lemma \ref{31_08_2:02am} (ii) we have
\begin{align}\label{20200918_12:55}
\int_{0}^{\infty}\bigg(\dfrac{\sin(\kappa t)}{t}\bigg)^2f(t,\eta)\,\dt\geq \left(2 \liminf_{\tau \to \infty} \int_{a_2}^{a_3} F(\alpha, \tau) \,\mathrm{d} \alpha + o(1)\right)\dfrac{ \pi}{2}\,{\bf L}^-\,\kappa
\end{align} 
as $\kappa\to 0$, for $\eta\asymp 1/\kappa$. As before, choosing $\eta=T$ in \eqref{20200918_12:55} and combining with \eqref{1eq} and \eqref{2eq}, we get
\begin{equation*}
J(a_4,T)- J(a_1,T) \ge \left(\liminf_{\tau \to \infty} \int_{a_2}^{a_3} F(\alpha, \tau) \,\mathrm{d} \alpha + o(1)\right){\bf L}^-\, \frac{\log^2T}{T}
\end{equation*}
as $T \to \infty$. We now take $a_4 = \beta$, $a_1 = b$, $a_2 \to a_1^+$ and $a_3 \to a_4^-$ to conclude.

\end{proof}

\section{The second moment of the logarithmic derivative of $\zeta(s)$}  

\subsection{Preliminaries} We start by presenting some auxiliary tools for the upcoming proof of Theorem \ref{log zeta}.

\subsubsection{Relating $I(a,T)$ to the Poisson kernel} Our starting point for the proof of Theorem \ref{log zeta} is a result of Goldston, Gonek, and Montgomery which, assuming RH, relates the integral $I(a,T)$ to the Poisson kernel
\begin{equation}\label{Def_Poisson_kernel_beta}
h_{b}(x) := \frac{b}{b^2 + x^2}.
\end{equation}
\begin{lemma} \label{GGM_Lemma}
Assume RH and let $0<a\le \sqrt{\log T}$. Then
\[
\begin{split}
I(a,T)=\log T\displaystyle\sum_{0<\gamma,\gamma'\leq T} &h_{a/\pi}\!\left((\gamma-\gamma')\frac{\log T}{2\pi}\right) w(\gamma-\gamma')-\dfrac{1}{2}\int_{1}^{T}\log^2\bigg(\dfrac{t}{2\pi}\bigg)\dt 
\\
&+ O\bigg(\dfrac{\log^4 T}{a^2}\bigg) +O(aT\log T),
\end{split}
\]
where $w(u)=4/(4+u^2)$.\footnote{The weight function $w(u)=2 h_2(u)$ is also a Poisson kernel, but we keep Montgomery's notation $w(u)$ to illustrate the connection to the Fourier inversion formula \eqref{inversion}. }
\end{lemma}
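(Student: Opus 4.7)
My plan is to follow the general template for mean-square estimates of $\zeta'/\zeta$ near the critical line. I would start from the Hadamard factorization of $\xi(s) = \tfrac{1}{2}s(s-1)\pi^{-s/2}\Gamma(s/2)\zeta(s)$: taking its logarithmic derivative and applying Stirling's formula to $\Gamma'/\Gamma(s/2)$ yields, for $s = \tfrac{1}{2} + a/\log T + it$ with $t \ge 1$,
\[
\frac{\zeta'}{\zeta}(s) \;=\; \sum_\gamma \frac{1}{s - \tfrac{1}{2} - i\gamma} \;-\; \frac{1}{2}\log\frac{t}{2\pi} \;+\; O(1),
\]
understood in the usual symmetric sense over the non-trivial zeros (valid under RH). Squaring and integrating over $t \in [1,T]$ produces three contributions: the pure log-square piece, a cross term, and a double sum of the form
\[
\sum_{\gamma,\gamma'} \int_1^T \frac{\dt}{(c+i(t-\gamma))(c-i(t-\gamma'))}, \qquad c := \frac{a}{\log T}.
\]

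For the double sum I would extend the $t$-integral to $\R$ (controlling the boundary corrections at $t=1,T$) and evaluate it by residues, obtaining $2\pi/\bigl(2c - i(\gamma-\gamma')\bigr)$. Symmetrizing over the swap $(\gamma,\gamma') \leftrightarrow (\gamma',\gamma)$ and taking real parts produces $4\pi c/\bigl(4c^2 + (\gamma-\gamma')^2\bigr)$, which a short algebraic check identifies with $\log T \cdot h_{a/\pi}\bigl((\gamma-\gamma')\log T/(2\pi)\bigr)$---exactly the Poisson-kernel sum in the statement. The cross term, evaluated via a second residue computation together with the Riemann--von Mangoldt count $\#\{0<\gamma\le T\}\sim (T/2\pi)\log(T/2\pi)$, combines with the pure log-square piece to yield the stated main term $-\tfrac{1}{2}\int_1^T\log^2(t/2\pi)\,\dt$.

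The main obstacle is the appearance of the Montgomery weight $w(\gamma-\gamma')$ and the uniform control of the error terms in $a$. The weight $w$ does not drop out of the raw residue computation; rather, it is characteristic of Montgomery's setup and encodes, via the Parseval identity displayed in the introduction just after the definition of $F(\alpha,T)$, a smooth enforcement of the restriction $0 < \gamma, \gamma' \le T$. I would reconcile the two formulations by invoking that duality, exploiting the fact that the close pairs $|\gamma-\gamma'|\ll 1$ (which carry essentially all the mass of the sum) satisfy $w(\gamma-\gamma') \approx 1$, so inserting $w$ is harmless up to the stated errors. Finally, $O(\log^4 T/a^2)$ arises from controlling the self-pair contribution with $|t-\gamma|, |t-\gamma'| \ll 1/\log T$, where $|1/(c+i(t-\gamma))|$ can be as large as $\log T/a$; and $O(aT\log T)$ comes from propagating the $O(1)$ error in the Hadamard expansion through the length-$T$ integration against a main factor of size $\log T$.
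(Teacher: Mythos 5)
The paper's ``proof'' of this lemma is essentially a citation: it observes that \cite[Theorem~1]{GGM} gives the formula without the weight $w(\gamma-\gamma')$ and under $0<a\ll 1$, and then notes (i) the proof in \cite{GGM} extends unchanged to $a\le\sqrt{\log T}$ and (ii) a computation on p.~115 of \cite{GGM} lets one insert $w(\gamma-\gamma')$ at the cost of $O(aT\log T)$. You instead attempt to re-derive the GGM formula from scratch, and your outline is broadly the correct one that GGM themselves follow: expand $\zeta'/\zeta$ via the Hadamard/explicit-formula identity, square and integrate, extend the $t$-integral to $\R$, evaluate the double sum by residues, and algebraically identify the resulting $4\pi c/(4c^2+(\gamma-\gamma')^2)$ with $\log T\cdot h_{a/\pi}\bigl((\gamma-\gamma')\log T/(2\pi)\bigr)$. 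That identification is correct, and your residue computation and the pairing with the cross and pure-log terms are sound in principle.

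However, there is a genuine gap precisely where the paper's short proof does its only real work: the insertion of $w(\gamma-\gamma')$. Your argument that ``inserting $w$ is harmless'' because close pairs have $w\approx 1$ is a heuristic, not a proof --- the tail $|\gamma-\gamma'|\gg 1$ still contributes, and quantifying the loss there to exactly $O(aT\log T)$ is the content of the GGM p.~115 computation the paper leans on. Moreover, you attribute $O(aT\log T)$ to the propagation of the $O(1)$ error from the Hadamard expansion, which is not its source in the paper; it comes from the weight insertion. Finally, the lemma's range $a\le\sqrt{\log T}$ (rather than the original $a\ll 1$) is one of the two modifications the paper points out, and your plan does not address why the derivation remains valid in that extended range. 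In short, your plan is a reasonable reconstruction of the cited GGM argument, but it is not a complete proof of this lemma because the two nontrivial modifications that the lemma actually requires are left unproved.
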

\begin{proof}
This formula is stated in \cite[Theorem 1]{GGM} without the weight function $w(\gamma-\gamma')$ in the double sum over zeros and with the constraint $0<a \ll 1$. The proof in \cite{GGM} goes through unchanged with the condition $0<a\le \sqrt{\log T}$ and a calculation on p.~115 of \cite[Section 2]{GGM} shows that the factor $w(\gamma-\gamma')$ can be added at the expense of a term that is $O(aT\log T)$. 
\end{proof}

\subsubsection{Extremal bandlimited approximations} Our argument for the upper bound for the second moment of the logarithmic derivative of $\zeta(s)$ is related to the following extremal problem in Fourier analysis.

\subsubsection*{Extremal problem 7 {\rm (EP7)}} Fix $b >0$ and let $h_b(x)$ be the Poisson kernel defined in \eqref{Def_Poisson_kernel_beta}. Find a continuous and integrable function $m_b:\R \to \R$ such that 
\begin{enumerate}
\item[(i)] $h_b(x) \leq m_b(x)$ for all $x \in \R$;

\smallskip

\item[(ii)] ${\rm supp}(\widehat{m_b}) \subset [-1,1];$

\smallskip

\item[(iii)] $\int_{\R}\big( m_b(x) - h_b(x) \big) \, \mathrm{d}x$ is as small as possible.
\end{enumerate}

\smallskip

This is called the Beurling-Selberg majorant problem (for the function $h_b$). As discussed in \cite[Lemma 9]{CChiM}, the solution of this particular problem comes from the general Gaussian subordination framework of Carneiro, Littmann, and Vaaler \cite{CLV}. Such extremal function exists and is unique, being given by
\[
m_{b}(x) = \left(\frac{b}{b^2 + x^2}\right) \left( \frac{e^{2\pi b} +  e^{-2\pi b} - 2\cos(2\pi x)}{\left(e^{\pi b}- e^{-\pi b}\right)^2} \right).
\]
Its Fourier transform is given by
\[
\widehat{m}_{b}(\alpha) = \dfrac{\pi}{2}\dfrac{\sinh(2\pi b(1-|\alpha|))}{(\sinh(\pi b))^2} \chi_{[-1,1]}(\alpha).
\]

\subsubsection{A weighted integral of $F(\alpha)$} For the lower bound in Theorem \ref{log zeta} we shall use a different approach rather than bandlimited approximations. Following Goldston \cite[Section 7]{G2}, we define the function
\[
\frak{I}(\xi)=\int_{1}^{\xi}(\xi-\alpha) F(\alpha) \, \mathrm{d} \alpha
\]
and we observe that $\frak{I}''(\xi)=F(\xi)$ for $\xi \ge 1$.  The following lemma gives a non-trivial lower bound for $\frak{I}(\xi)$ when $\xi \ge 1+1/\sqrt{3}$.

\begin{lemma} \label{CCLM_lemma}
Assume RH. Then, as $T\to\infty$, we have
\[
\frak{I}(\xi) \geqslant \frac{\xi^{2}}{2}-\xi+\frac{1}{3}+O\!\left(\xi^{2}  \sqrt{\frac{\log\log T}{\log T}} \, \right)
\]
uniformly for $\xi \ge 1$. 
\end{lemma}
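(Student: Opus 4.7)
The plan is to exploit the fact that the triangular function $(\xi-|\alpha|)_+$ is precisely $\xi$ times the Fourier transform of the Fej\'er kernel $K_\xi(x) = \xi\bigl(\sin(\pi\xi x)/(\pi\xi x)\bigr)^2$ from \eqref{20210125_09:36}, and then use positivity of $K_\xi$ and $w$ to obtain a lower bound on the resulting sum over zeros. Since $F$ is even, I would first write
\[
2\mathfrak{I}(\xi) \;=\; \int_{-\xi}^{\xi}(\xi-|\alpha|)\,F(\alpha)\,\d\alpha \;-\; \int_{-1}^{1}(\xi-|\alpha|)\,F(\alpha)\,\d\alpha,
\]
separating the full symmetric Fej\'er-type integral (to be bounded from below by positivity) from a short-range remainder supported in $[-1,1]$ (to be evaluated asymptotically via the Goldston--Montgomery formula \eqref{F formula}).

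For the full integral, I would apply the Fourier inversion identity \eqref{inversion} with $R = K_\xi$ (so that $\widehat{R}(\alpha)=(1-|\alpha|/\xi)_+$), which gives
\[
\int_{-\xi}^{\xi}(\xi-|\alpha|)\,F(\alpha)\,\d\alpha \;=\; \frac{2\pi\xi}{T\log T}\sum_{0<\gamma,\gamma'\le T} K_\xi\!\left((\gamma-\gamma')\tfrac{\log T}{2\pi}\right) w(\gamma-\gamma').
\]
Since $K_\xi\ge 0$ and $w\ge 0$, the sum is bounded below by its diagonal contribution ($\gamma=\gamma'$), namely $K_\xi(0)\sum_{0<\gamma\le T}m_\gamma = \xi\sum_\gamma m_\gamma \ge \xi\,N(T) = \xi\cdot(T\log T/2\pi)\bigl(1+O(1/\log T)\bigr)$ by Riemann--von Mangoldt under RH. Therefore
\[
\int_{-\xi}^{\xi}(\xi-|\alpha|)\,F(\alpha)\,\d\alpha \;\ge\; \xi^{2}\bigl(1+O(1/\log T)\bigr).
\]

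For the remainder, I would substitute the asymptotic $F(\alpha)=\bigl(T^{-2|\alpha|}\log T+|\alpha|\bigr)\bigl(1+O(\sqrt{\log\log T/\log T})\bigr)$ from \eqref{F formula}. Two elementary calculations yield $2\int_0^1(\xi-\alpha)\alpha\,\d\alpha=\xi-\tfrac{2}{3}$, and, after the substitution $u=2\alpha\log T$, $2\int_0^1(\xi-\alpha)T^{-2\alpha}\log T\,\d\alpha = \xi + O(1/\log T)$ as $T\to\infty$. Combining,
\[
\int_{-1}^{1}(\xi-|\alpha|)\,F(\alpha)\,\d\alpha \;=\; 2\xi-\tfrac{2}{3}+O\!\left(\xi\sqrt{\tfrac{\log\log T}{\log T}}\right).
\]
Subtracting this from the lower bound of the full integral and dividing by $2$ then delivers $\mathfrak{I}(\xi)\ge \xi^{2}/2-\xi+1/3+O(\xi^{2}\sqrt{\log\log T/\log T})$, uniformly for $\xi\ge 1$.

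The main subtlety is not in any single step but in tracking error terms consistently: the multiplicative error in \eqref{F formula} contributes $O(\xi\sqrt{\log\log T/\log T})$ from the short-range piece, while the Riemann--von Mangoldt error contributes $O(\xi^{2}/\log T)$ from the Fej\'er piece; both are absorbed into the advertised $O(\xi^{2}\sqrt{\log\log T/\log T})$. A minor check is that, in the range $1\le \xi<1+1/\sqrt{3}$, the right-hand side of the lemma is non-positive, so the bound holds trivially from $\mathfrak{I}(\xi)\ge 0$; the content of the estimate lies entirely in the range $\xi\ge 1+1/\sqrt{3}$, where the positivity of the Fej\'er diagonal gives a strictly better-than-trivial bound.
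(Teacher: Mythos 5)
Your proof is correct and employs the Fej\'er-kernel positivity argument that underlies the paper's cited proof in \cite[Lemma 17]{CCLM}: the decomposition $2\frak{I}(\xi)=\int_{-\xi}^{\xi}(\xi-|\alpha|)F(\alpha)\,\mathrm{d}\alpha-\int_{-1}^{1}(\xi-|\alpha|)F(\alpha)\,\mathrm{d}\alpha$, the lower bound for the first integral by the nonnegative diagonal contribution to the Fej\'er sum (the same device the paper uses around \eqref{20210111_09:32}), and the evaluation of the second via \eqref{F formula}, are precisely the intended steps. Your closing observation about $1\le\xi<1+1/\sqrt{3}$ is harmless but unneeded, since the argument already delivers the stated bound uniformly for all $\xi\ge1$.
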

\begin{proof}
This is a slight refinement of \cite[Lemma 17]{CCLM}, using \eqref{F formula} in the proof that appears there. 
\end{proof}

\subsection{Proof of Theorem \ref{log zeta}}
\subsubsection{Upper bound}
We use the special function $m_b(x)$ and Lemma \ref{GGM_Lemma}. Since $\widehat{m}_{b}(\alpha)$ and $F(\alpha)$ are even and $\mathrm{supp}(\widehat{m}_{b}) \subset [-1,1]$, by \eqref{inversion} and \eqref{F formula} we have 
\begin{align*}
\sum_{0<\gamma,\gamma'\leq T} &h_{a/\pi}\!\left((\gamma-\gamma')\frac{\log T}{2\pi}\right)  w(\gamma-\gamma') 
\\
&\qquad \le \sum_{0<\gamma,\gamma'\leq T} m_{a/\pi}\!\left((\gamma-\gamma')\frac{\log T}{2\pi}\right) w(\gamma-\gamma')
\\
&\qquad = \frac{T\log T}{2\pi} \int_{-1}^1 \widehat{m}_{a/\pi}(\alpha) \, F(\alpha) \, \mathrm{d}\alpha
\\
&\qquad = T\log T \left\{  \int_{0}^{1} \frac{\sinh (2a(1-\alpha))}{2\,(\sinh a)^2} \big(\alpha + T^{-2\alpha}\log T\big)\left(1+O\Bigg(\sqrt{\frac{\log\log T}{\log T}} \Bigg) \right)\d\alpha  \right\}
\\
& \qquad = T\log T \left\{ \dfrac{\coth a}{4a^2}-\dfrac{(\csch\, a)^2}{4a} + \dfrac{\coth a}{2} + O\Bigg(\bigg(\frac{1}{a}+1\bigg)\sqrt{\frac{\log\log T}{\log T}} \Bigg)  \right\},
\end{align*}
where the big-$O$ term is obtained by using the fact that $0<a \le \sqrt{\log T}$. Since
\[
\int_{1}^{T}\log^2\bigg(\dfrac{t}{2\pi}\bigg)\dt = T\log^2T + O\big(T\log T\big),
\]
%if we let
%\begin{equation}\label{U}
%U(a)=\dfrac{\coth a}{4a^2}-\dfrac{(\csch\, a)^2}{4a} + \dfrac{\coth a}{2}-\dfrac{1}{2},
%\end{equation}
the upper bound in Theorem \ref{log zeta} now follows from Lemma \ref{GGM_Lemma} by using the additional constraints
\begin{equation}\label{constraints}
\frac{(\log T)^{5/2}}{T}\leq a \le \frac{(\log T)^{1/4}}{(\log\log T)^{1/2}}
\end{equation}
and the fact that $U^+(a) = \frac{2}{3a} - \frac{1}{2}+O(a)$ as $a \to 0^+$ and $U^+(a) \sim \frac{1}{4a^2}$ as $a \to \infty$, in order to group the error terms.

\subsubsection{Lower bound} We now use Lemmas \ref{GGM_Lemma} and \ref{CCLM_lemma}. Since $\widehat{h}_b(\alpha) = \pi e^{-2 \pi b |\alpha|}$, by \eqref{inversion}, \eqref{F formula}, and the fact that $F(\alpha)$ is even, we have
\begin{align*}
\sum_{0<\gamma,\gamma'\leq T} &h_{a/\pi}\!\left((\gamma-\gamma')\frac{\log T}{2\pi}\right)  w(\gamma-\gamma')  = \frac{T\log T}{2\pi} \int_{-\infty}^\infty \widehat{h}_{a/\pi}(\alpha) \, F(\alpha) \, \mathrm{d}\alpha
\\
&\qquad  = T\log T \, \bigg\{  \int_{0}^{1} e^{-2 a \alpha}  \big(\alpha + T^{-2\alpha}\log T\big)\left(1+O\Bigg(\sqrt{\frac{\log\log T}{\log T}} \Bigg) \right)\d\alpha 
\\
&\qquad \qquad\qquad\qquad + \int_{1}^\infty e^{-2 a \alpha} \, F(\alpha) \, \mathrm{d}\alpha  \bigg\}
\\
&\qquad  = T\log T \, \bigg\{ \frac{1\!-\!(1\!+\!2 a) \, e^{-2 a}}{4 a^{2}}  +\frac{1}{2} + O\Bigg(\sqrt{\frac{\log\log T}{\log T}} \Bigg)
 + \int_{1}^\infty e^{-2 a \alpha} \, F(\alpha) \, \mathrm{d}\alpha  \bigg\},
%\\
%& \qquad = T\log T \left\{ \dfrac{\coth a}{4a^2}-\dfrac{(\csch\, a)^2}{4a} + \dfrac{\coth a}{2} + O\Bigg(\bigg(\frac{1}{a}+1\bigg)\frac{\sqrt{\log\log T}}{\sqrt{\log T}}\Bigg)  \right\},
\end{align*}
where the big-$O$ term is obtained by using the fact that $0<a \le \sqrt{\log T}$. To estimate the integral from 1 to $\infty$, we integrate by parts twice (from the work of Goldston \cite[Section 7]{G2} we have $\frak{I}'(\xi) = O(\xi)$ and $\frak{I}(\xi) = O(\xi^2)$ for $\xi \geq 1$). Since $\frak{I}(1)=\frak{I}'(1)=0$ and $\frak{I}(\alpha)\ge 0$ for $\alpha \ge 1$, we apply Lemma \ref{CCLM_lemma} to deduce that
\begin{align*}
\int_{1}^\infty e^{-2 a \alpha} \, F(\alpha) \, \mathrm{d}\alpha &= 4 a^{2} \int_{1}^{\infty} \frak{I}(\alpha) \, e^{-2 a \alpha} \, \mathrm{d} \alpha
\\
&\ge 4a^2 \int_{1+1 / \sqrt{3}}^{\infty}\left(\frac{\alpha^{2}}{2}-\alpha+\frac{1}{3}\right) e^{-2 a \alpha} \, \mathrm{d} \alpha +O\left(a^2 \sqrt{\frac{\log\log T}{\log T}} \int_{1+1 / \sqrt{3}}^{\infty} \alpha^{2} \, e^{-2 a \alpha} \, \mathrm{d} \alpha\right)
\\
&=\left(\frac{1}{2 a}+\frac{1}{\sqrt{3}}\right) e^{-2 a(1+1 / \sqrt{3})} + O\left(\frac{1}{a}\,\sqrt{\frac{\log\log T}{\log T}} \right).
\end{align*}
%{\color{red} Do we have an issue with the uniformity in $a$ at this step?} 
Again since
\[
\int_{1}^{T}\log^2\bigg(\dfrac{t}{2\pi}\bigg)\dt = T\log^2T + O\big(T\log T\big),
\]
%if we let
%\begin{equation}\label{U}
%U(a)=\dfrac{\coth a}{4a^2}-\dfrac{(\csch\, a)^2}{4a} + \dfrac{\coth a}{2}-\dfrac{1}{2},
%\end{equation}
the lower bound in Theorem \ref{log zeta} now follows from Lemma \ref{GGM_Lemma} by using the additional constraints in \eqref{constraints}
%\[
%\frac{(\log T)^{5/2}}{T}\leq a \le \frac{(\log T)^{1/4}}{(\log\log T)^{1/2}}
%\]
%{\color{red} is this not true anymore?...} 
and the fact that $U^-(a) = \frac{1}{2a} - \frac{1}{2}+O(a^2)$ as $a \to 0^+$ and $U^-(a) \sim \frac{1}{4a^2}$ as $a \to \infty$, in order to group the error terms. This concludes the proof.

\section{Appendix A: Minima of Dirichlet kernels}\label{App_A}
Complementing the discussion in \S \ref{Subsec_20201221_09:26}, we present a brief proof of inequality \eqref{20210110_11:41}. Let $\mathfrak{m}(n)$ as in \eqref{20200915_13:33} and $c_0$ as in \eqref{20201221_09:30}.
\begin{proposition}
For each $n \in \N$, the following bounds hold
\begin{equation*}	
	2c_0  - \frac{(2 \pi -1)}{n}  \ \leq \ \frac{\mathfrak{m}(n)}{n} \ \leq \ 2c_0 + \frac{5.4935}{n}.
\end{equation*}
\end{proposition}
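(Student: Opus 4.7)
The plan is to handle the upper and lower bounds by complementary arguments. For the upper bound, I would test the infimum defining $\mathfrak{m}(n)$ at the specific point $\theta = y_0/(2n+1)$, where $y_0 > 0$ is the unique minimizer of $\sin y/y$ (so $\sin y_0 = c_0\, y_0 < 0$). Using only the elementary inequality $\sin t \le t$ for $t \ge 0$ to bound the denominator,
\[
\mathfrak{m}(n) \;\le\; \frac{\sin y_0}{\sin\bigl(y_0/(2n+1)\bigr)} \;\le\; \frac{c_0\, y_0\,(2n+1)}{y_0} \;=\; c_0(2n+1),
\]
so $\mathfrak{m}(n)/n \le 2c_0 + c_0/n$, and this is trivially bounded by $2c_0 + 5.4935/n$ since $c_0 < 0 < 5.4935$.

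For the lower bound, I would first use the evenness and $2\pi$-periodicity of $D_n$ to reduce to $\theta = x/2 \in (0,\pi/2]$, set $y = (2n+1)\theta$, and combine the unified estimate $|\sin y| \le \min(|c_0|\,y,\,1)$ (the first half from the definition of $c_0$, the second trivial) with the Taylor lower bound $\sin\theta \ge \theta(1 - \theta^2/6)$, valid on $[0,\pi/2]$. The natural threshold is $\tau_n := 1/(|c_0|(2n+1))$, at which $|c_0|y = 1$ and the two bounds on $|\sin y|$ agree; one verifies $\tau_n < \pi/2$ for every $n \ge 1$. For $\theta \le \tau_n$, the estimate $|\sin y| \le |c_0|y$ together with the Taylor bound yields $|D_n(x)| \le |c_0|(2n+1)/(1 - \tau_n^2/6)$; for $\theta \in [\tau_n,\pi/2]$, the estimate $|\sin y| \le 1$ combined with the monotonicity of $\sin$ on $[0,\pi/2]$ and the Taylor bound evaluated at the endpoint $\tau_n$ produces the same upper bound. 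Consequently
\[
\mathfrak{m}(n) \;\ge\; \frac{c_0(2n+1)}{1 - \tau_n^2/6} \;=\; c_0(2n+1) \;-\; \frac{1}{6\,|c_0|\,(2n+1)\,\bigl(1 - \tau_n^2/6\bigr)}.
\]

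Dividing by $n$ and matching against the target $2c_0 - (2\pi - 1)/n$, the remaining task is to show that the coefficient of $1/n$ in the error, namely $|c_0| + 1/\bigl(6|c_0|(2n+1)(1 - \tau_n^2/6)\bigr)$, is at most $2\pi - 1$. This coefficient is decreasing in $n$ with limit $|c_0| \approx 0.22$, and an explicit evaluation at the extreme case $n = 1$ (where $\tau_1 \approx 1.53$ and $\tau_1^2/6 \approx 0.39$) gives a value below $0.7$, far under $2\pi - 1 \approx 5.28$. The principal obstacle is arranging the case split so that the two elementary bounds on $|\sin y|$ agree cleanly at the junction $\tau_n$ and the Taylor correction $\tau_n^2/6$ is controlled uniformly; the looseness in the stated constant $2\pi - 1$ (the argument in fact produces a coefficient tending to the essentially optimal $|c_0|$ as $n \to \infty$) is a deliberate choice that simplifies the final estimate and absorbs all slack at small $n$.
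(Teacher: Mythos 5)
Your argument is correct in substance and takes a genuinely different route from the paper's. The paper rewrites $D_n$ as a Riemann sum and proves the two-sided comparison $\bigl|\tfrac{D_n(x)}{n}-\tfrac1n-\tfrac{2\sin(nx)}{nx}\bigr|\le x$, then gets the upper bound by evaluating at $x=x_1/n$ and the lower bound by a three-case analysis on the location of the minimizer in $[0,\pi]$ (using $\sin t\ge 2t/\pi$ on the outer range). You instead work directly with the closed form $\sin\bigl((2n+1)\theta\bigr)/\sin\theta$: your upper bound, via $\sin t\le t$ at the single test point $\theta=y_0/(2n+1)$ (where one should note $y_0/(2n+1)\le y_0/3<\pi/2$, so the denominator is positive), actually yields the stronger estimate $\mathfrak{m}(n)\le c_0(2n+1)$, i.e.\ $\mathfrak{m}(n)/n\le 2c_0+c_0/n<2c_0$, which trivially implies the stated bound since $c_0<0$. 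Your lower bound, via $\sin y\ge c_0y$, the trivial $\sin y\ge -1$, and $\sin\theta\ge\theta(1-\theta^2/6)$, produces an error coefficient $|c_0|+o(1)$ in place of $2\pi-1$; I checked the junction algebra, the identity for $c_0(2n+1)/(1-\tau_n^2/6)$, the monotonicity in $n$, and the numerics at $n=1$ (where $\tau_1\approx 1.534<\pi/2$ and the coefficient is about $0.64$), and all of it holds. Both directions are sharper than the paper's, at the modest cost of a couple of explicit numerical verifications at $n=1$.

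One point of precision: the ``unified estimate'' $|\sin y|\le\min(|c_0|y,1)$ is false as a two-sided statement (take $y$ with $\sin y$ near $1$ and $|c_0|y<1$), and consequently the intermediate claim $|D_n(x)|\le |c_0|(2n+1)/(1-\tau_n^2/6)$ in your first case also fails at points where $D_n$ is large and positive. What is true, and is all the argument needs, is the one-sided bound $\sin y\ge-\min(|c_0|y,1)$ --- the first piece is exactly the definition of $c_0$, the second is trivial --- which, combined with $\sin\theta>0$ on $(0,\pi/2]$, gives $D_n(x)\ge-\min(|c_0|y,1)/\sin\theta$ and hence the stated lower bound on the minimum $\mathfrak{m}(n)$. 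Phrase the estimate one-sidedly and the proof is complete.
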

\begin{proof}
We rewrite \eqref{9_14_8:00pm} as
	$$
	D_n(n)=1+2n\displaystyle\sum_{k=1}^{n}
	\cos\bigg(xn\dfrac{k}{n}\bigg)\dfrac{1}{n}.$$
	Using the mean value theorem we get, for $x\geq 0$, 
	\begin{align*}
	 \Bigg|\displaystyle\sum_{k=1}^{n}& \cos\bigg(xn\dfrac{k}{n}\bigg)\dfrac{1}{n}-\displaystyle\int_{0}^{1}\cos(xnt)\,\dt\,\Bigg|  =\Bigg|\displaystyle\sum_{k=1}^{n}\cos\bigg(xn\dfrac{k}{n}\bigg)\dfrac{1}{n}-\displaystyle\sum_{k=1}^{n}\displaystyle\int_{(k-1)/n}^{k/n}\cos(xn t)\,\dt\,\Bigg|\\
	& \leq \displaystyle\sum_{k=1}^{n}\Bigg|\displaystyle\int_{(k-1)/n}^{k/n}\bigg(\cos\bigg(xn\dfrac{k}{n}\bigg)-\cos(xn t)\bigg)\,\dt\,\Bigg|\\
	& \leq \displaystyle\sum_{k=1}^{n}\displaystyle\int_{(k-1)/n}^{k/n}xn \left(\frac{k}{n} - t \right)\dt\\
	& =  \frac{x}{2}.
	\end{align*}
Therefore, 
	\begin{align} \label{9_15_3:50pm}
	\dfrac{1}{n}+\dfrac{2\sin(nx)}{nx}-x\leq \dfrac{D_n(x)}{n}\leq \dfrac{1}{n}+\dfrac{2\sin(nx)}{nx}+x.
	\end{align}
Let $x_1 = 4.49340\ldots$ be the unique real positive number such that 
$$c_0 = \min_{x\in\mathbb{R}} \frac{\sin x}{x} = \frac{\sin x_1}{x_1} =  -0.21723\ldots\!.$$
Plugging $x_n=x_1/n$ in \eqref{9_15_3:50pm} we obtain
	\begin{align*}
	\dfrac{\mathfrak{m}(n)}{n} & \leq \dfrac{D_n(x_n)}{n}\leq \dfrac{1}{n}+\dfrac{2\sin(nx_n)}{nx_n}+x_n = \dfrac{1}{n}+\dfrac{2\sin(x_1)}{x_1}+\dfrac{x_1}{n} \leq 2c_0 + \dfrac{5.4935}{n}.
	\end{align*}
On the other hand, using the fact that $D_n(x)$ is an even periodic function with period $2\pi$, it follows that $\mathfrak{m}(n)  = \displaystyle\min_{x\in [0,\pi]}D_n(x)$. Let $\xi\in [0,\pi]$ be a real number where such minimum is attained. If $2\pi/(2n+1)\leq \xi\leq 4\pi/(2n+1)$, using \eqref{9_15_3:50pm} we get
\begin{align*}
\dfrac{\mathfrak{m}(n)}{n}  =\dfrac{D_n(\xi)}{n}\geq \dfrac{1}{n}+2c_0-\dfrac{4\pi}{2n+1}> 2c_0 -  \dfrac{(2\pi-1)}{n}. 
\end{align*} 
If $6\pi/(2n+1)\leq \xi\leq \pi$, using the fact that $\sin t\geq 2t/{\pi}$ for $t\in\big[0,\frac{\pi}{2}\big]$ we have
\begin{align*} 
\dfrac{\mathfrak{m}(n)}{n} = \dfrac{D_n(\xi)}{n}& = \frac{\sin((n+1/2)\xi)}{n\sin(\xi/2)} \geq \frac{-1}{n\sin(\xi/2)}\geq -\dfrac{\pi}{n\xi}\geq -\dfrac{2n+1}{6n}>2c_0 -  \dfrac{(2\pi-1)}{n}. 
\end{align*}
Finally, in the cases $0\leq \xi < 2\pi/(2n+1)$ or $4\pi/(2n+1)< \xi < 6\pi/(2n+1)$, it is clear that $D_n(\xi)\geq 0$, and such points will not be points where the global minimum is attained. This concludes the proof.
\end{proof}

\section{Appendix B: Hilbert spaces and pair correlation}

\subsection{Sharp equivalence of norms} We conclude by revisiting a result of \cite{CCLM}, a paper that provides a study of the pair correlation of zeros of zeta via the framework of Hilbert spaces of entire functions. Let us first recall some basic terminology. For $\Delta >0$ we say that an entire function $f: \mathbb C \rightarrow \mathbb C$ has exponential type at most $2\pi\Delta$ if, for all $\varepsilon > 0$, there exists a positive constant $C_\varepsilon$ such that $|f(z)| \leq C_{\varepsilon}\,e^{(2\pi\Delta + \varepsilon)|z|}$ for all $z \in \C$. Let $\mc{B}_2(\pi\Delta)$ be the classical Paley--Wiener space, i.e.~the Hilbert space of entire functions of exponential type at most $\pi \Delta$ with norm
\begin{equation*}
\|f\|_{2} = \left(\int_{-\infty}^\infty |f(x)|^2 \, \dx\right)^{1/2} <\infty.
\end{equation*}
Functions in $\mc{B}_2(\pi\Delta)$ have Fourier transforms supported in the interval $[-\Delta/2, \Delta/2]$ (by the Paley--Wiener theorem). For a survey on such spaces, their interpolation formulas and some classical applications to analytic number theory we refer the reader to the work of J.~D.~Vaaler \cite{V}.

\smallskip

Write $\dmu(x) =  \big\{ 1 - \left(\frac{\sin \pi x}{\pi x}\right)^2\!\big\} \,\dx$ for the pair correlation measure and denote by $\mc{B}_2(\pi,\mu)$ the normed vector space of entire functions $f$ of exponential type at most $\pi$ with norm
\begin{equation*}
\|f\|_{L^2(\d\mu)} = \left(\int_{-\infty}^\infty |f(x)|^2 \, \dmu(x)\right)^{1/2} <\infty.
\end{equation*}
Using the uncertainty principle for the Fourier transform, it was shown in \cite[Lemma 12]{CCLM} that the vector spaces $\mc{B}_2(\pi)$ and $\mc{B}_2(\pi,\mu)$ are the same (as sets), with the norms being equivalent. That is, there is a universal constant ${\bf D} >0$ such that 
\begin{equation}\label{20210113_11:22}
{\bf D}\, \|f\|_2 \leq \|f\|_{L^2(\d\mu)} \leq  \|f\|_2
\end{equation}
whenever $f \in \mc{B}_2(\pi)$. In particular, $\mc{B}_2(\pi,\mu)$ is also a Hilbert space. It should be clear that the inequality on the right-hand side of \eqref{20210113_11:22} is sharp and that there are no extremizers. In fact, given any $f_0 \in \mc{B}_2(\pi)$, the sequence $f_n(z):=f_0(z-n)$ is an extremizing sequence as $n \to \infty$. In this appendix we discuss the problem of finding the value of the sharp constant ${\bf D}$.
\subsubsection*{Extremal problem 8 {\rm (EP8)}} Find 
\begin{align}\label{20210113_14:22}
{\bf D}^2 :=  \inf_{\substack{f \in \mc{B}_2(\pi) \\ f\neq 0}} \ \frac{\|f\|_{L^2(\d\mu)}^2 }{\|f\|_2^2} = \inf_{\substack{g \in \mathcal{A}_0 \\ g\neq 0}} \ \frac{\rho(g) - g(0)}{\widehat{g}(0)}.
\end{align}
{\sc Remark:} We comment briefly on the equality between the infima above, as it relates to the class $\mathcal{A}_0$ defined in \S \ref{Fourier_Opt}, the quantity $\rho(g)$ defined in \eqref{20201217_11:41} (which is equal to \eqref{20210205_14:19} in this case), and some of the other extremal problems that have been considered in this paper. This is essentially a consequence of the Paley--Wiener theorem and Krein's decomposition \cite[p.~154]{A}: a continuous and non-negative function $g \in L^1(\R)$ has ${\rm supp}(\widehat{g}) \subset [-1,1]$ if and only if it is the restriction to $\R$ of an entire function of exponential type $2\pi$ (that we keep calling $g(z)$) and $g(z) = f(z)\overline{f(\overline{z})}$ for some $f \in \mc{B}_2(\pi)$. The fact that we can restrict the search on the right-hand side of \eqref{20210113_14:22} to even functions comes from a standard symmetrization procedure: if $g$ is not even, we can consider $h(x) = \frac12(g(x) + g(-x))$ without affecting the ratio. 

\smallskip

Finding the sharp forms of embeddings between function spaces is usually a rich and non-trivial problem in analysis. As we shall see, extremal problem (EP8) has a particularly intriguing answer.

\begin{theorem}\label{Thm_20201218_14:43}
We have
$$ {\bf D}^2  = 1 - \frac{1}{2 \pi^2 \theta^2} = 0.3244\ldots\,,$$ 
where $0 < \theta < \frac{1}{2}$ is the unique solution of 
$$ \ \ \ \  \ \ \ \ \ \qquad\qquad \ \   \ \ \left( \pi \theta \right)\, \tan\left( \pi \theta \right) = 1 \ \ \ \ ;  \ \ \ \ (\theta = 0.27385\ldots).$$
Moreover, there is a unique $($up to multiplication by a non-zero complex constant\,$)$ extremal function $f \in \mc{B}_2(\pi)$ such that $\|f\|_{L^2(\d\mu)} / \|f\|_2 = {\bf D}$, namely
\begin{equation}\label{20210113_17:11}
f(z) = \frac{\sin \pi(z + \theta)}{\pi (z + \theta)} + \frac{\sin \pi(z - \theta)}{\pi (z - \theta)}.
\end{equation}
\end{theorem}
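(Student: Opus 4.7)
The plan is to transform (EP8) into a spectral problem for a compact self-adjoint integral operator on $L^2[-\tfrac12,\tfrac12]$, solve that eigenvalue equation explicitly via a second-order ODE, and invert the Fourier transform to identify the extremizer.

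First I would reformulate the Rayleigh quotient on the Fourier side. Every $f \in \mc{B}_2(\pi)$ satisfies $\|f\|_2^2 = \|\widehat f\|_{L^2[-1/2,1/2]}^2$, and a short Plancherel/convolution computation using $\widehat{K_1}(\gamma) = (1-|\gamma|)_+$ gives
\begin{equation*}
\|f\|_2^2 - \|f\|_{L^2(\d\mu)}^2 \,=\, \int_\R |f(x)|^2 K_1(x)\,\dx \,=\, \int_{-1/2}^{1/2}\!\int_{-1/2}^{1/2}\!\bigl(1-|\alpha-\beta|\bigr)\,\widehat f(\alpha)\overline{\widehat f(\beta)}\,\d\alpha\,\d\beta.
\end{equation*}
Hence ${\bf D}^2 = 1 - \Lambda$, where $\Lambda$ is the largest eigenvalue of the integral operator
\begin{equation*}
T\varphi(\alpha) := \int_{-1/2}^{1/2}\bigl(1-|\alpha-\beta|\bigr)\,\varphi(\beta)\,\d\beta
\end{equation*}
on $L^2[-\tfrac12,\tfrac12]$. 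The kernel is continuous, symmetric, and positive definite (its quadratic form equals $\int|f|^2K_1\ge0$ and vanishes only for $f\equiv 0$), so $T$ is compact, self-adjoint, and positive, and $\Lambda$ is attained.

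Next I would solve $T\varphi=\lambda\varphi$ explicitly. Differentiating the integral equation twice in $\alpha$ kills the kink and yields $(T\varphi)''(\alpha)=-2\varphi(\alpha)$; hence eigenfunctions obey the ODE $\lambda\varphi''+2\varphi=0$. With $\omega^2=2/\lambda$, the general solution is $A\cos(\omega\alpha)+B\sin(\omega\alpha)$. The kernel is invariant under $(\alpha,\beta)\mapsto(-\alpha,-\beta)$, so eigenfunctions split into even and odd branches: plugging $\varphi(\alpha)=\cos(\omega\alpha)$ back into the integral equation at $\alpha=\tfrac12$ and simplifying using $\lambda\omega^2=2$ produces the transcendental condition $(\omega/2)\tan(\omega/2)=1$; the odd branch $\varphi(\alpha)=\sin(\omega\alpha)$ produces $\cos(\omega/2)=0$, i.e.~$\omega=(2k+1)\pi$.

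Finally I would identify the top eigenvalue and extract the extremizer. Writing $\theta=\omega/(2\pi)$, the smallest positive root of the even equation is determined by $\pi\theta\tan(\pi\theta)=1$, giving $\theta=0.27385\ldots$ and $\lambda=1/(2\pi^2\theta^2)\approx 0.675$, while the smallest odd eigenvalue is $2/\pi^2\approx 0.203$; the even branch wins, so $\Lambda=1/(2\pi^2\theta^2)$ and ${\bf D}^2=1-1/(2\pi^2\theta^2)=0.3244\ldots$. The eigenvalue is simple because $\cos(\omega_0/2)\neq 0$ at the critical $\omega_0$, so no linear combination with $\sin(\omega_0\alpha)$ solves the same boundary equation; this yields uniqueness up to scalar multiples. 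Inverting the Fourier transform of $\widehat f(\alpha)=\cos(2\pi\theta\alpha)\chi_{[-1/2,1/2]}(\alpha)=\tfrac12\bigl(e^{2\pi i\theta\alpha}+e^{-2\pi i\theta\alpha}\bigr)\chi_{[-1/2,1/2]}(\alpha)$ recovers, up to a nonzero constant, the claimed extremizer in \eqref{20210113_17:11}. The main obstacle is the comparison step showing that the smallest positive $\omega$ lies in the even branch rather than the odd one; this is the transcendental check that simultaneously pins down both the sharp constant and the extremal function.
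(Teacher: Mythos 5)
Your argument is correct and is essentially the paper's second (``variational'') proof: your eigenvalue equation $T\varphi=\lambda\varphi$ for the kernel $(1-|\alpha-\beta|)$ on $L^2[-\tfrac12,\tfrac12]$ is exactly the Euler--Lagrange equation \eqref{20210125_10:14}, and the double differentiation, the resulting ODE $\lambda\varphi''+2\varphi=0$, and the transcendental condition $(\pi\theta)\tan(\pi\theta)=1$ all coincide with the paper's computation. The only packaging difference is that you get existence and simplicity of the top eigenvalue from the spectral theorem for compact self-adjoint operators and discard the odd branch by direct eigenvalue comparison ($2/\pi^2$ versus $1/(2\pi^2\theta^2)$), whereas the paper establishes existence by weak compactness in the reproducing kernel space $\mathcal{B}_2(2\pi)$ and eliminates the sine term via the a priori bound $\eta\geq\tfrac23$; both routes are sound.
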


We shall give two different proofs of this result. The first one relies on classical interpolation formulas and we work mostly on the entire function side. The second one uses a variational approach and we work mostly on the Fourier transform side.

\subsection{Proof 1: Interpolation approach}

\subsubsection{Setup and uniqueness of extremizer} \label{App_sub1}If $h \in \mc{B}_2(\pi\Delta)$, from the Paley--Wiener theorem and Plancherel's identity, the norm $\|h\|_2$ can be inferred from $(1/\Delta)$-equally spaced samples as follows: for any $x_0 \in \R$,
\begin{equation}\label{20210113_15:38}
\|h\|_2^2 = \frac{1}{\Delta}\sum_{k \in \Z} \big|h\big(x_0 + \tfrac{k}{\Delta}\big)\big|^2.
\end{equation}
Let $f \in \mc{B}_2(\pi)$, and assume without loss of generality that $\|f\|_2 = 1$. Then, from \eqref{20210113_15:38}, 
 \begin{equation}\label{20210116_22:11}
1 = \|f\|_2^2  = \sum_{n \in \Z} \big|f\big(n - \tfrac{1}{2} \big)\big|^2.
\end{equation}
In fact, given any square summable sequence $\big\{f\big(n - \frac12\big)\big\}_{n \in \Z}$, this completely determines the function $f$ via the interpolation formula
 \begin{equation}\label{20210125_15:59}
 f(z) = \sum_{n \in \Z}f\big(n - \tfrac{1}{2} \big) \frac{\sin \pi \big(z  -n+ \tfrac12 \big)}{\pi \big(z -n + \tfrac12 \big)}.
 \end{equation}
 In particular, we have
 \begin{equation}\label{20210121_11:57}
 f(0) = \sum_{n \in \Z}f\big(n - \tfrac{1}{2} \big) \frac{\sin \pi \big(-n+ \tfrac12 \big)}{\pi \big(-n+ \tfrac12 \big)} = \sum_{n \in \Z}f\big(n - \tfrac{1}{2} \big) \frac{(-1)^{n+1}}{\pi \big(n- \tfrac12 \big)}.
 \end{equation}
 Since $f(z)\left(\frac{\sin \pi z}{\pi z}\right) \in \mc{B}_2(2\pi)$, we may apply \eqref{20210113_15:38} to get
 \begin{align}\label{20210122_10:04}
\begin{split}
\|f\|_{L^2(\d\mu)}^2  & = \int_{-\infty}^\infty |f(x)|^2\, \dx - \int_{-\infty}^\infty |f(x)|^2\,\left(\frac{\sin \pi x}{\pi x}\right)^2 \,\dx\\
& = 1 - \frac{1}{2}\sum_{k \in \Z} \big|f\big( \tfrac{k}{2}\big)\big|^2\,\left(\frac{\sin \pi \big(\tfrac{k}{2}\big)}{\pi \big(\tfrac{k}{2}\big)}\right)^2 \\
& = 1 -  \frac{1}{2}|f(0)|^2 - \frac{1}{2\pi^2}\sum_{n \in \Z} \frac{\big|f\big(n - \tfrac{1}{2} \big)\big|^2}{ \big|n - \tfrac{1}{2} \big|^2} .
\end{split}
\end{align}
The problem then becomes: given a square summable sequence of coefficients $\big\{f\big(n - \frac12\big)\big\}_{n \in \Z}$, normalized as in \eqref{20210116_22:11}, we seek to maximize the quantity
\begin{equation*}
\frac{1}{2}|f(0)|^2 + \frac{1}{2\pi^2}\sum_{n \in \Z} \frac{\big|f\big(n - \tfrac{1}{2} \big)\big|^2}{ \big|n - \tfrac{1}{2} \big|^2} .
\end{equation*}
By the triangle inequality, from \eqref{20210121_11:57} we have
 \begin{equation}\label{20210116_22:42}
 |f(0)| \leq \sum_{n \in \Z} \frac{\big|f\big(n - \tfrac{1}{2} \big)\big|}{\pi \big|n - \tfrac12\big|}.
 \end{equation}
Equality holds in \eqref{20210116_22:42} if and only if there is a complex number $c$, with $|c| =1$, such that 
 \begin{equation}\label{20210121_15:57}
  \frac{f\big(n - \tfrac{1}{2} \big) (-1)^{n+1}}{\big(n -  \tfrac12 \big)} = c  \ \frac{\big|f\big(n - \tfrac{1}{2} \big)\big|}{ \big|n - \tfrac12\big|}
  \end{equation}
for all $n \in \Z$, and we may henceforth assume that this is the case. It is then enough to decide what is the best sequence of absolute values $\big\{\big|f\big(n - \frac12\big)\big|\big\}_{n \in \Z}$, in order to maximize 
\begin{equation}\label{20210121_12:26}
\frac{1}{2}|f(0)|^2 + \frac{1}{2\pi^2}\sum_{n \in \Z} \frac{\big|f\big(n - \tfrac{1}{2} \big)\big|^2}{ \big(n - \tfrac{1}{2} \big)^2} =\frac{1}{2\pi^2}\left(  \left(\sum_{n \in \Z} \frac{\big|f\big(n - \tfrac{1}{2} \big)\big|}{ \big|n - \tfrac12\big|}\right)^2 + \sum_{n \in \Z} \frac{\big|f\big(n - \tfrac{1}{2} \big)\big|^2}{ \big|n - \tfrac{1}{2} \big|^2}\right).
\end{equation}
Note that if we symmetrize the sequence by considering 
$$\big|\widetilde{f}\big(n - \tfrac{1}{2}\big)\big|:= \left( \frac{\big|f\big(n - \tfrac{1}{2}\big)\big|^2 + \big|f\big(-n + \tfrac{1}{2}\big)\big|^2}{2}\right)^{1/2}\,,$$
we get an outcome at least as good in \eqref{20210121_12:26}, doing strictly better if there is $n \in \Z$ for which $\big|f\big(n - \tfrac{1}{2} \big)\big| \neq \big|f\big(-n + \tfrac{1}{2} \big)\big|$ (observe that the second sum on the right-hand side of \eqref{20210121_12:26} remains unchanged while the first one, before raising to the power $2$, does not decrease by an application of the inequality $a + b \leq 2((a^2 + b^2)/2)^{1/2}$ to the symmetric pairs). Therefore, from now on we can also assume that 
\begin{equation}\label{20210121_15:04}
\big|f\big(n - \tfrac{1}{2} \big)\big| = \big|f\big(-n + \tfrac{1}{2} \big)\big|
\end{equation}
for all $n \in \Z$. By a similar argument, we can also make the following remark: if two sequences $\big\{\big|f_1\big(n - \frac12\big)\big|\big\}_{n \in \Z}$ and $\big\{\big|f_2\big(n - \frac12\big)\big|\big\}_{n \in \Z}$, verifying \eqref{20210116_22:11} and \eqref{20210121_15:04}, yield the same value in \eqref{20210121_12:26}, we can construct a third sequence $\big\{\big|f_3\big(n - \frac12\big)\big|\big\}_{n \in \Z}$ by
$$\big|f_3\big(n - \tfrac{1}{2}\big)\big|:= \left( \frac{\big|f_1\big(n - \tfrac{1}{2}\big)\big|^2 + \big|f_2\big(n - \tfrac{1}{2}\big)\big|^2}{2}\right)^{1/2}\,,$$
and get an outcome at least as good in \eqref{20210121_12:26}. This follows since
$$\sum_{n \in \Z} \frac{\big|f_3\big(n - \tfrac{1}{2} \big)\big|^2}{ \big|n - \tfrac{1}{2} \big|^2} = \frac{1}{2} \left(\sum_{n \in \Z} \frac{\big|f_1\big(n - \tfrac{1}{2} \big)\big|^2}{ \big|n - \tfrac{1}{2} \big|^2} + \sum_{n \in \Z} \frac{\big|f_2\big(n - \tfrac{1}{2} \big)\big|^2}{ \big|n - \tfrac{1}{2} \big|^2}\right),$$
and
\begin{align*}
\left(\sum_{n \in \Z} \frac{\big|f_3\big(n - \tfrac{1}{2} \big)\big|}{ \big|n - \tfrac12\big|}\right)^2 & = \sum_{n \in \Z} \frac{\big|f_3\big(n - \tfrac{1}{2} \big)\big|^2}{ \big|n - \tfrac{1}{2} \big|^2} +  \sum_{m\neq n } \frac{\big|f_3\big(n - \tfrac{1}{2} \big)\big|\ \big|f_3\big(m - \tfrac{1}{2} \big)\big|}{ \big|n - \tfrac{1}{2} \big|\  \big|m - \tfrac{1}{2} \big|} \\
& \geq  \sum_{n \in \Z} \frac{\big|f_3\big(n - \tfrac{1}{2} \big)\big|^2}{ \big|n - \tfrac{1}{2} \big|^2} +  \frac{1}{2}\left( \sum_{m\neq n } \frac{\big|f_1\big(n - \tfrac{1}{2} \big)\big|\ \big|f_1\big(m - \tfrac{1}{2} \big)\big|}{ \big|n - \tfrac{1}{2} \big|\  \big|m - \tfrac{1}{2} \big|} +  \sum_{m\neq n } \frac{\big|f_2\big(n - \tfrac{1}{2} \big)\big|\ \big|f_2\big(m - \tfrac{1}{2} \big)\big|}{ \big|n - \tfrac{1}{2} \big|\  \big|m - \tfrac{1}{2} \big|}\right)\\
& = \frac{1}{2}\left( \left(\sum_{n \in \Z} \frac{\big|f_1\big(n - \tfrac{1}{2} \big)\big|}{ \big|n - \tfrac12\big|}\right)^2 + \left(\sum_{n \in \Z} \frac{\big|f_2\big(n - \tfrac{1}{2} \big)\big|}{ \big|n - \tfrac12\big|}\right)^2\right).
\end{align*}
Note above the use of the inequality $(ab + cd) \leq (a^2 + c^2)^{1/2}\, (b^2 + d^2)^{1/2}$. Equality of the outcome in \eqref{20210121_12:26} happens if and only if 
$$\big|f_1\big(n - \tfrac{1}{2} \big)\big|\ \big|f_2\big(m - \tfrac{1}{2} \big)\big|= \big|f_2\big(n - \tfrac{1}{2} \big)\big|\ \big|f_1\big(m - \tfrac{1}{2} \big)\big|$$
for all $m,n \in \Z$. This means that the two sequences would have to be proportional and the normalization \eqref{20210116_22:11} would force them to be equal. This discussion leads us to the following conclusion: once we prove that a maximizer exists, it is going to be unique (modulo multiplication by a constant $c$ as in \eqref{20210121_15:57}).

\medskip

Under the symmetry condition \eqref{20210121_15:04} we may rewrite the quantity on the right-hand side of \eqref{20210121_12:26} as 
\begin{equation}\label{20210122_10:06}
\frac{4}{\pi^2}\left( 2 \left(\sum_{n=1}^{\infty} \frac{\big|f\big(n - \tfrac{1}{2} \big)\big|}{ 2n - 1}\right)^2 + \sum_{n = 1}^{\infty} \frac{\big|f\big(n - \tfrac{1}{2} \big)\big|^2}{ (2n -1)^2}\right).
\end{equation}
This needs to be maximized under the constraint
$$\sum_{n = 1}^{\infty} \big|f\big(n - \tfrac{1}{2} \big)\big|^2 = \frac{1}{2}.$$

\subsubsection{An equivalent inequality and Lagrange multipliers} We now consider a reformulation of our problem, which is the case $N = \infty$ below. It is convenient for our purposes to also consider a finite-dimensional formulation in the  upcoming discussion.

\subsubsection*{Extremal problem 9 {\rm (EP9)}} Let $N \in \N$ or $N = \infty$. Let $\{a_n\}_{n =1}^{N}$ a sequence of non-negative real numbers, such that 
\begin{equation*}
\sum_{n =1}^{N} a_n^2 =\frac{1}{2}.
\end{equation*}
Find the supremum $Q_{N}$ of 
$$ F\big(\{a_n\}\big) = 2 \left( \sum_{n =1}^{N} \frac{a_n}{2n-1}\right)^2 + \sum_{n =1}^{N} \frac{a_n^2}{(2n-1)^2}.$$ 

\smallskip

One can verify that $\{Q_N\}_{N \in \N}$ is an increasing sequence and that 
\begin{equation}\label{20210122_09:12}
\lim_{N \to \infty} Q_N  = Q_{\infty}.
\end{equation}
Let us then focus in understanding the problem at a fixed level $N < \infty$. Observe first that we can assume without loss of generality that our sequence is ordered as $a_1 \geq a_2 \geq a_3 \geq \ldots \geq a_N$. This is a consequence of the rearrangement inequality for sequences. Also, since $0 \leq a_n \leq \frac{\sqrt{2}}{2}$ for $1 \leq n \leq N$ and our functional $F$ is continuous, the maximum $Q_N$ is attained by some sequence. Our domain, in principle, is the cube $\big[0,\frac{\sqrt{2}}{2}\big]^N$, and we are restricted by the constraint function:
\begin{equation}\label{20210117_00:50}
G(a_1, a_2, \ldots, a_N) = \sum_{n =1}^{N} a_n^2 =\frac{1}{2}.
\end{equation}
We first study the critical points in the interior of our domain. Assume that $(a_1, a_2, a_3, \ldots, a_N)$ with 
  $$a_1 \geq a_2 \geq a_3 \geq \ldots \geq a_N >0$$
is a critical point of our functional (note that the assumption that $a_N>0$ automatically implies that $\frac{\sqrt{2}}{2} >a_1$ and all these points are in the open interval $\big(0,\frac{\sqrt{2}}{2}\big)$). By the Lagrange multiplier theorem, we must have
$$\frac{\partial F}{\partial a_k}  = \lambda \frac{\partial G}{\partial a_k}$$
for $k =1,2,\ldots, N$ and some $\lambda \in \R$. This gives us the following system of equations:
\begin{equation}\label{20210117_01:10}
4 \left( \sum_{n =1}^{N} \frac{a_n}{2n-1}\right)\frac{1}{(2k-1)} + 2\frac{a_k}{(2k-1)^2} = 2\lambda a_k \ \ \ \ ;  \ \ \ (k =1,2,\ldots, N).
\end{equation}
From \eqref{20210117_01:10} with $k=1$, we plainly see that $\lambda >1$. We may rewrite \eqref{20210117_01:10} as
\begin{equation}\label{20210117_00:32}
2 \left( \sum_{n =1}^{N} \frac{a_n}{2n-1}\right)  = a_k\left( (2k-1)\lambda - \frac{1}{(2k-1)}\right) \ \ \ \  ; \ \ \ (k =1,2,\ldots, N).
\end{equation}
Since the quantity on the left-hand side of \eqref{20210117_00:32} is fixed, we must have
\begin{equation}\label{20202117_00:38}
a_k = a_1 \frac{\lambda -1}{\left( (2k-1)\lambda - \frac{1}{(2k-1)}\right)} \ \ ;\ \ (k =1,2,\ldots, N).
\end{equation}
We can now use \eqref{20210117_00:32} and \eqref{20202117_00:38} to find the value of $\lambda$, getting 
\begin{equation}\label{20210117_01:30}
 \left( \sum_{n =1}^{N} \frac{1}{ (2n-1)^2\lambda - 1}\right)  = \frac{1}{2}.
 \end{equation}
This uniquely determines our $\lambda=: \lambda_N$ (in case we need to highlight the dependence on the parameter $N$, we shall use $\lambda_N$). Once $\lambda = \lambda_N$ is found, the value of $a_1$ can be computed by the constraint function \eqref{20210117_00:50} and \eqref{20202117_00:38}, giving
\begin{equation}\label{20210117_02:01}
a_1^2 \sum_{n=1}^N \frac{(\lambda -1)^2}{\left( (2n-1)\lambda - \frac{1}{(2n-1)}\right)^2} = \frac{1}{2}.
\end{equation}
From \eqref{20202117_00:38} we have the other $a_k$'s. Hence this interior critical point is unique. We can also find the explicit value it yields in the outcome functional $F$. Dividing \eqref{20210117_01:10} by $2$, multiplying by $a_k$, and adding up over $k$ from $1$ to $N$, we get 
\begin{align}\label{20210118_09:19}
F(a_1, a_2, \ldots, a_N) = \frac{\lambda_N}{2}.
\end{align}
Alternatively, we can obtain \eqref{20210118_09:19} directly from \eqref{20202117_00:38}, \eqref{20210117_01:30}, and \eqref{20210117_02:01}.

\subsubsection{Conclusion} It is clear from \eqref{20210117_01:30} that 
$$\lambda_1 < \lambda_2  < \ldots < \lambda_N < \lambda_{N+1} < \ldots$$
This is important for us for the following reason. Suppose that we are solving the problem at level $N < \infty$ and we have a global maximizer with $a_1 \geq a_2 \geq a_3 \geq \ldots \geq a_N \geq 0$. Suppose that $a_{M+1} = \ldots  = a_N =0$ and that $a_M >0$. Then $(a_1, a_2, \ldots , a_M)$ must be a global maximizer at level $M$ (for if we had a sequence doing better, we would just add some zeros and do better at level $N$ as well). Hence, this sequence $(a_1, a_2, \ldots , a_M)$ at level $M$ is an interior critical point, which we have seen is unique and yields the value
$$F(a_1, a_2, \ldots, a_M) = \frac{\lambda_M}{2}.$$
On the other hand, we know that the interior critical point at level $N$ gives the value $\lambda_N/2$, which is strictly bigger than $\lambda_M/2$, a contradiction. Hence, a global maximizer at level $N$ must have $a_N >0$, and it will be the unique interior critical point constructed with the Lagrange multipliers. The conclusion is that 
$$Q_N = \frac{\lambda_N}{2}.$$

\smallskip

Note that $\displaystyle\lim_{N \to \infty} \lambda_N = \lambda_{\infty}\,,$
where $\lambda = \lambda_{\infty} >1$ solves the equation (see \cite[Eq.~1.421-1]{GR})
\begin{equation}\label{20210117_01:39}
 \ \ \ \ \ \ \  \frac{1}{2} = \left( \sum_{n =1}^{\infty} \frac{1}{ (2n-1)^2\lambda - 1}\right)  =  \left( \frac{\pi}{4 \sqrt{\lambda}}\right)\  \tan\left( \frac{\pi}{2 \sqrt{\lambda}}\right) \ \ \ ; \ \ \ (\lambda_{\infty} = 3.33354\ldots).
 \end{equation}
The final answer of extremal problem (EP9) in the case $N = \infty$ is then given by \eqref{20210122_09:12}, namely
\begin{equation}\label{20210117_01:57}
Q_{\infty} = \frac{\lambda_{\infty}}{2}.
\end{equation}
We observe that there exists a maximizing sequence. This is given by \eqref{20202117_00:38} and \eqref{20210117_02:01} for this particular choice of $\lambda = \lambda_{\infty}$ (taking $N = \infty$ in these identities). The ideas in the discussion at the end of \S \ref{App_sub1} show that such maximizing sequence is unique for problem (EP9) when $N = \infty$, as well.

\smallskip

As for our original problem, from \eqref{20210122_10:04}, \eqref{20210121_12:26}, \eqref{20210122_10:06}, and \eqref{20210117_01:57}, we conclude that 
\begin{equation*}
\|f\|_{L^2(\d\mu)}^2  \geq 1 - \frac{2 \lambda_{\infty} }{\pi^2} = 0.3244\ldots
\end{equation*}
Equality can be attained and the unique maximizer is given by \eqref{20210121_15:57}, \eqref{20210121_15:04}, \eqref{20202117_00:38}, and \eqref{20210117_02:01} (with $N = \infty$ in the last two), yielding
\begin{equation}\label{20210125_15:49}
f\big(-n + \tfrac{1}{2} \big) = f\big(n - \tfrac{1}{2} \big)  = (-1)^{n+1}\, \frac{\sqrt{2}}{2} \frac{1}{\left( (2n-1)\lambda_{\infty} - \frac{1}{(2n-1)}\right)}\left(\sum_{k=1}^{\infty}  \frac{1}{\left( (2k-1)\lambda_{\infty} - \frac{1}{(2k-1)}\right)^2}\right)^{-1/2}.
\end{equation}
We remark that, for any $\lambda >1$, we have the identity (that follows from \eqref{20210117_01:39} by differentiation)
\begin{align}\label{20210122_11:24}
\sum_{k=1}^{\infty}  \frac{1}{\left( (2k-1)\lambda - \frac{1}{(2k-1)}\right)^2} = \frac{\pi^2 \sec^2\left( \frac{\pi}{2 \sqrt{\lambda}}\right) + 2 \pi \sqrt{\lambda }\tan\left( \frac{\pi}{2 \sqrt{\lambda}}\right)}{16\lambda^2}. 
\end{align}
Hence, when $\lambda = \lambda_{\infty}$, we can use \eqref{20210117_01:39} to simplify \eqref{20210122_11:24} to $\frac{\pi^2}{16 \lambda_{\infty}^2} + \frac{1}{2 \lambda_{\infty}}$.

\medskip 

With the substitution $\theta = \frac{1}{2\sqrt{ \lambda_{\infty}}}$, we may check directly that the function presented in \eqref{20210113_17:11}, when evaluated at $\Z + \frac{1}{2}$, gives something proportional to \eqref{20210125_15:49}. The interpolation formula \eqref{20210125_15:59} then guarantees that \eqref{20210113_17:11} is indeed (a multiple of) our maximizer. This concludes the proof.

\subsection{Proof 2: Variational approach}

\subsubsection{Existence of extremizers} The first step in this approach is to show that there exists $f \in \mc{B}_2(\pi)$ that extremizes \eqref{20210113_14:22} (i.e.~such that $\|f\|_{L^2(\d\mu)} / \|f\|_2 = {\bf D}$). As we have argued in \eqref{20210113_14:22} and the remark thereafter, it is enough to find an extremizer in the class $\mathcal{A}_0$ defined in \S \ref{Fourier_Opt} for 
\begin{equation}\label{20210122_18:49}
1 - {\bf D}^2:= \sup_{\substack{ g \in \mathcal{A}_0 \\ g\neq 0}}\frac{\int_{-\infty}^{\infty} g(x) \left( \frac{\sin \pi x}{\pi x}\right)^2\,\dx}{\int_{-\infty}^{\infty} g(x) \,\dx}.
\end{equation}
Let $\{g_n\}_{n \geq 1} \subset \mathcal{A}_0$ be an extremizing sequence for \eqref{20210122_18:49}, normalized so that $\|g_n\|_1 = 1$ for all $n$. Hence, 
$$\int_{-\infty}^{\infty} g_n(x) \left( \frac{\sin \pi x}{\pi x}\right)^2\,\dx \to 1 - {\bf D}^2$$
as $n \to \infty$. Recall that ${\rm \supp}(\widehat{g_n}) \subset [-1,1]$ and that $\|\widehat{g_n}\|_{\infty} = \widehat{g_n}(0) = \|g_n\|_1 = 1$. Therefore $\|\widehat{g_n}\|_{2}^2 \leq 2\|\widehat{g_n}\|_{\infty}^2 \leq 2$, and we see that $\{g_n\}_{n \geq 1}$ is a bounded sequence in $\mc{B}_2(2\pi)$. By reflexivity, passing to a subsequence if necessary, we may assume that $g_n$ converges weakly to a certain $g^{\sharp} \in \mc{B}_2(2\pi)$. In particular,
\begin{align}\label{20210125_09:22}
1 - {\bf D}^2 = \lim_{n \to \infty} \int_{-\infty}^{\infty} g_n(x) \left( \frac{\sin \pi x}{\pi x}\right)^2\,\dx = \int_{-\infty}^{\infty} g^{\sharp} (x) \left( \frac{\sin \pi x}{\pi x}\right)^2\,\dx,
\end{align}
and hence $g^{\sharp}  \neq 0$. Since $\mc{B}_2(2\pi)$ is a reproducing kernel Hilbert space, we also have the pointwise convergence
\begin{align*}
 \lim_{n \to \infty} g_n(y) =  \lim_{n \to \infty} \int_{-\infty}^{\infty} g_n(x) \, \frac{\sin 2\pi(y-x)}{\pi (y-x)}  \,\dx = \int_{-\infty}^{\infty} g^{\sharp} (x) \, \frac{\sin 2\pi(y-x)}{\pi (y-x)}  \,\dx = g^{\sharp} (y)
\end{align*}
for all $y \in \R$. Hence $g^{\sharp} $ is even and non-negative on $\R$. Moreover, by Fatou's lemma, it follows that
\begin{equation}\label{20210125_09:23}
\|g^{\sharp} \|_1 \leq \liminf_{n \to \infty} \|g_n\|_1 =1 ,
\end{equation}
which implies that $g^{\sharp}  \in \mathcal{A}_0$. From \eqref{20210125_09:22} and \eqref{20210125_09:23}, we see that this particular $g^{\sharp} $ is an extremizer for \eqref{20210122_18:49}.

\subsubsection{Solving the Euler-Lagrange equation} For a generic $0\neq h \in \mc{B}_2(\pi)$ let us write
\begin{equation}\label{20210125_09:42}
\Phi(h) = \frac{\int_{-\infty}^{\infty} |h(x)|^2 \left( \frac{\sin \pi x}{\pi x}\right)^2\,\dx}{\int_{-\infty}^{\infty} |h(x)|^2 \,\dx}.
\end{equation}
For instance, for $h(x) = \frac{\sin \pi x}{\pi x}$, we have $\Phi(h) = \tfrac23$. Let $0 \neq f \in \mc{B}_2(\pi)$ be a maximizer for \eqref{20210125_09:42}, normalized so that $\|f\|_2 =1$. That is,
$$\Phi(f) = 1 - {\bf D}^2.$$
In what follows let us write $K_1(x) =  \left(\frac{\sin \pi x}{\pi x}\right)^2$, recalling our notation \eqref{20210125_09:36}. For any $h \in \mc{B}_2(\pi)$ with $\|h\|_2 =1$ and $h \perp f$, we have $\Phi(f + \varepsilon h) \leq \Phi(f)$ for any $\varepsilon \in \R$, with equality if $\varepsilon =0$. Therefore
\begin{align*}
0 = \frac{\partial}{\partial \varepsilon}\Phi(f + \varepsilon h)\Big|_{\varepsilon = 0} = 2 \,{\rm Re} \left(\int_{-\infty}^{\infty} f(x) \overline{h(x)} K_1(x)\,\dx\right).
\end{align*}
Similarly, for $\varepsilon \in \R$, 
\begin{align*}
0 = \frac{\partial}{\partial \varepsilon}\Phi(f + i\varepsilon h)\Big|_{\varepsilon = 0} = 2 \,{\rm Im} \left(\int_{-\infty}^{\infty} f(x) \overline{h(x)} K_1(x)\,\dx\right).
\end{align*}
We then conclude that 
\begin{align*}
0 = \int_{-\infty}^{\infty} f(x) K_1(x) \overline{h(x)}\, \dx = \int_{-\infty}^{\infty} \big(\widehat{f} *\widehat{K_1}\big)(\alpha)\,  \overline{\widehat{h}(\alpha)}\, \d\alpha.
\end{align*}
Since this holds for any $h \perp f$ in $\mc{B}_2(\pi)$, the function $\widehat{f}$ must verify the following Euler--Lagrange equation:
\begin{equation}\label{20210125_10:14}
\big(\widehat{f}*\widehat{K_1}\big) \,\chi_{[-\frac12,\frac12]}  = \eta \, \widehat{f},
\end{equation}
as functions in $L^2[-\frac12, \frac12]$, for some $\eta \in \C$. At this point observe that \eqref{20210125_10:14} yields 
\begin{equation*}
1 - {\bf D}^2 = \Phi(f) = \int_{-\infty}^{\infty} \big(\widehat{f} *\widehat{K_1}\big)(\alpha)\,  \overline{\widehat{f}(\alpha)}\, \d\alpha  = \eta.
\end{equation*}
Hence $\eta  \in \R$ and we have seen that $1 > \eta \geq \tfrac{2}{3}$.

\medskip

 Since the left-hand side of \eqref{20210125_10:14} is continuous in $[-\frac12, \frac12]$, we may assume that $\widehat{f}$ is continuous in $[-\frac12, \frac12]$ and hence
\begin{align}\label{20210125_10:15}
\big(\widehat{f}*\widehat{K_1}\big)(\alpha) = \int_{-\infty}^{\infty} \widehat{f}(\xi) \,\widehat{K_1}(\alpha - \xi)\,\d \xi = \eta \, \widehat{f}(\alpha)
\end{align}
for all $\alpha \in [-\frac12, \frac12]$. Since $\widehat{K_1}$ is a Lipschitz function, the integral in \eqref{20210125_10:15} (as a function of $\alpha$) is differentiable for all $\alpha \in (-\frac12, \frac12)$. Recalling that $\big(\widehat{K_1}\big)'(\alpha) = \chi_{(-1,0)}(\alpha) - \chi_{(0,1)}(\alpha)$, and that ${\rm supp}(\widehat{f}) \subset [-\frac12,\frac12]$, we have
\begin{align}\label{20210125_10:16}
\ \ \ \ \ \ \ \ \ \ - \int_{-\frac12}^{\alpha} \widehat{f}(\xi)\,\d\xi + \int_{\alpha}^{\frac12} \widehat{f}(\xi)\,\d\xi = \eta \, \big(\widehat{f}\big)'(\alpha) \ \ \ \ \ \ \ \ \ \big(\alpha \in \big(-\tfrac12, \tfrac12\big)\big).
\end{align}
The left-hand side of \eqref{20210125_10:16} is again differentiable in $\alpha$, and an application of the fundamental theorem of calculus now yields 
\begin{align*}
-2 \widehat{f}(\alpha) = \eta \, \big(\widehat{f}\big)''(\alpha) \ \ \ \ \ \ \ \ \ \big(\alpha \in \big(-\tfrac12, \tfrac12\big)\big).
\end{align*}
The general solution of this linear differential equation is  
\begin{align*}
\widehat{f}(\alpha) = \left(A \,e^{i \alpha \sqrt{2/\eta} } + B \,e^{-i \alpha\sqrt{2/\eta}}\right) \chi_{\big(-\frac12, \frac12\big)}(\alpha)\,,
\end{align*}
where $A, B \in \C$. Plugging this back into \eqref{20210125_10:16} we find the relation
\begin{equation*}
\cos\left( \frac{1}{\sqrt{2 \eta}}\right) (A- B) = 0.
\end{equation*}
Since $\eta \geq \tfrac23 > \tfrac{2}{\pi^2}$, we have $\cos(1/\sqrt{2\eta}) >0$ and therefore $A = B \neq 0$. Evaluating \eqref{20210125_10:15} at $\alpha =0$ we arrive at the condition
\begin{align}\label{20210125_14:20}
\left( \frac{1}{\sqrt{2 \eta}}\right) \tan\left( \frac{1}{\sqrt{2 \eta}}\right) = 1\,,
\end{align}
that determines our $\eta$ uniquely ($\eta = 0.67551\ldots$). Finally, the normalization $\|\widehat{f}\|_{L^2[-\frac12,\frac12]} =1$ together with \eqref{20210125_14:20} yields the value $|A| = \big((2\eta +1)/(8 \eta +2)\big)^{1/2}$. 

\medskip

In sum, our extremal function is unique (up to multiplication by a complex number) and its Fourier transform, with the substitution $\theta = (2 \pi^2 \eta)^{-1/2} $, is given by
\begin{align*}
\widehat{f}(\alpha) = 2A \cos\left( 2\pi \theta \, \alpha  \right) \,\chi_{\big(-\frac12, \frac12\big)}(\alpha),
\end{align*}
which, by Fourier inversion, leads us to \eqref{20210113_17:11}.

\section*{Acknowledgments}
\noindent Part of this paper was written while A.C. was a Visiting Researcher in Department of Mathematics at the University of Mississippi. He is grateful for their kind hospitality. We thank Oscar Quesada-Herrera for the design and implementation of the search algorithms in \S \ref{Sub_Pf_Thm1} and for the numerical computation of the constant in \eqref{20190705_11:49am}. We are also thankful to Jonathan Bober for an independent numerical computation of the constant in \eqref{20190705_11:49am}, and to Dan Goldston and Mateus Sousa for some helpful comments on an early draft of the paper. E.C. acknowledges support from FAPERJ - Brazil.  V.C. acknowledges support from an AMS-Simons Travel Grant and a Simons Foundation Collaboration Grant for Mathematicians. A.C. was supported by FAPERJ - Brazil and by Grant $275113$ of the Research Council of Norway. M.B.M. was supported in part by a Simons Foundation Collaboration Grant for Mathematicians.

\end{document}